\renewcommand{\phi}{\varphi}
\renewcommand{\epsilon}{\varepsilon}
\newcommand{\E}{{\mathcal E}}
\newcommand{\M}{{\mathcal M}}
\numberwithin{equation}{section}
\theoremstyle{plain}
\newtheorem{Thm}{Theorem}
\newtheorem{Prop}[Thm]{Proposition}
\newtheorem{Cor}[Thm]{Corollary}
\newtheorem{Lemma}[Thm]{Lemma}
\theoremstyle{definition}
\newtheorem{Defn}[Thm]{Definition}
\newtheorem{Ex}[Thm]{Example}
\newtheorem{Rk}[Thm]{Remark}
\newcommand{\cat}[1]{\mathbf{#1}}
\newcommand{\op}{\mathrm{op}}
\newcommand{\Cat}{\cat{Cat}}
\newcommand{\Set}{\cat{Set}}
\newcommand{\Sub}{\cat{Sub}}
\newcommand{\msf}[1]{\mathsf{#1}}
\newcommand{\mbb}[1]{\mathbb{#1}}
\newcommand{\ov}[1]{\overline{#1}}
\newcommand{\dom}{\msf{dom}}
\newcommand{\To}{\Rightarrow}
\renewcommand{\to}{\rightarrow}
\newcommand{\yon}{\mathbf{y}}
\newcommand{\sheaf}{\mathbf{a}}
\newcommand{\Lan}{\msf{Lan}}
\newcommand{\PSh}{\msf{PSh}}
\newcommand{\Sh}{\msf{Sh}}
\newcommand{\incl}{\mathbf{i}}
\newcommand{\Par}{\msf{Par}}
\newcommand{\Total}{\msf{Total}}
\begin{document}
\leftmargini=2em
\title{Presheaves over a join restriction category}
\author{Daniel Lin}
\address{Department of Mathematics, Macquarie University, North Ryde, NSW 2109, Australia}
\email{daniel.lin@mq.edu.au}
\subjclass[2000]{Primary: 18B99}
\keywords{join restriction categories, join restriction presheaves, sheaves, cocompletion}
\date{\today}
\thanks{The support of a Macquarie University Research Scholarship is gratefully acknowledged.}

\begin{abstract}
	Just as the presheaf category is the free cocompletion of any small category, there is an analogous
	notion of free cocompletion for any small restriction category. In this paper, we extend the work
	on restriction presheaves to presheaves over join restriction categories, and show that the
	join restriction category of join restriction presheaves is equivalent to some partial map category
	of sheaves. We then use this to show that the Yoneda embedding exhibits the category
	of join restriction presheaves as the free cocompletion of any small join restriction category.
\end{abstract}

\maketitle

\section{Introduction}\label{sec1}
The notion of a restriction category as a means of generalising partial map categories was formally introduced 
in \cite{CL1}, although the idea came much earlier from \cite{GRANDIS}.
In both papers, the partiality of a map was expressed in terms of an idempotent on its domain of definition, 
and this assignment of maps to their corresponding idempotents is known as the restriction structure on a 
category \cite{CL1}. It turns out that these restriction categories are the objects of a $2$-category called 
$\cat{rCat}$, and share many similarities with ordinary categories. Therefore, one might expect these restriction 
categories to have some notion of colimits and limits.

In \cite{GL}, the authors gave a definition of cocomplete restriction category, and introduced the notion
of a restriction presheaf over a restriction category. They also showed that there was an analogue of free cocompletion
in the restriction setting via the restriction category of restriction presheaves. The aim of this paper will be
to extend the work done in \cite{GL} to join restriction categories; namely restriction categories whose 
partially ordered hom-sets have joins if their elements are compatible.

We begin with a review of restriction categories \cite{CL1}, join restriction categories \cite{GUO} and $\M$-categories
in section \hyperref[sec2]{2}. In the same section, we characterise those $\M$-categories whose category
of partial maps has a join restriction structure, and define geometric $\M$-categories. In 
section \hyperref[sec3]{3}, we see that every geometric $\M$-category may be given a subcanonical topology,
and that the $\M$-category of sheaves on this site is also geometric. It turns out that the $\M$-category of sheaves
is the free cocompletion of any geometric $\M$-category. Using this fact, we give the free cocompletion
of any join restriction category in section \hyperref[sec4]{4}. Finally, in section \hyperref[sec5]{5}, we show that 
the partial map category of sheaves is equivalent to some join restriction category of join restriction presheaves. The
idea being that a compatible family of elements should correspond to some matching family for a covering sieve, and that 
the join of any such compatible family should correspond to an amalgamation. A consequence of this result is the analogue of 
free cocompletion for any join restriction category.

Finally, in this paper, unless otherwise stated, we shall assume that our categories are locally small.

\section{Join restriction categories and geometric \texorpdfstring{$\M$}{M}-categories}\label{sec2}
	Restriction categories were introduced in \cite{GRANDIS} as a way to represent the partiality of maps 
	by an idempotent on its domain of definition, known as a restriction idempotent. Many examples of restriction 
	categories exist and are listed in \cite{CL1}. For example, $\Set_p$ (the category of sets and partial functions)
	is a restriction category whose restriction idempotents are precisely the partial identity maps.
	In $\Set_p$, we may consider hom-sets of partial functions which agree on their domains of definition, and
	these form what is called a \emph{compatible family}. It is not hard to see that for any family of compatible
	maps $S$ in $\Set_p$, we may define a new partial function whose domain of definition is the \emph{union} of the
	domains of definition of the individual partial functions in $S$. Indeed, this idea is captured via the notion of a 
	join restriction category \cite{GRANDIS}. Examples of join restriction categories include $\Set_p$ as well
	as $\cat{Top}_p$, the category of topological spaces and partial continuous functions.
	
	In \cite{CL1}, the authors proved that the $2$-categories of split restriction categories and $\M$-categories were 
	$2$-equivalent. The goal of this section will be to prove that this $2$-equivalence restricts back
	to a $2$-equivalence between the $2$-category of split join restriction categories and the $2$-category of geometric 
	$\M$-categories. Let us begin by recalling the definition of a restriction category.

	\begin{Defn}[Cockett-Lack]
		A \emph{restriction category} is a category $\cat{X}$ together with assignations
			$$ \cat{X}(A,B) \to \cat{X}(A,A), \quad f \mapsto \bar{f} $$
		with $\bar{f}$ satisfying the following conditions:
			\begin{enumerate}[leftmargin=1.5cm,label=(R\arabic*)]
				\item $f \circ \bar{f} = f$;
				\item $\bar{g} \circ \bar{f} = \bar{f} \circ \bar{g}$;
				\item $\ov{g \circ \bar{f}} = \bar{g} \circ \bar{f}$;
				\item $\bar{h} \circ f = f \circ \ov{h \circ f}$,
			\end{enumerate}
		for suitable maps $g$ and $h$. We call $\bar{f}$ the \emph{restriction} of $f$.
	\end{Defn}

	A map $f \in \cat{X}$ is called a \emph{restriction idempotent} if $f = \bar{f}$, and is \emph{total} if $\bar{f} = 1$. 
	Observe that each hom-set $\cat{X}(A,B)$ has a partial order given by $f\le g$ if and only if $f = g\bar{f}$.
	If $\cat{X}$ and $\cat{Y}$ are restriction categories, then a \emph{restriction functor} $F\colon \cat{X}\to\cat{Y}$ is a 
	functor which preserves the restriction structure on $\cat{X}$. If $F,G \colon\cat{X}\to\cat{Y}$ are restriction functors, 
	a restriction transformation $\alpha\colon F\To G$ is a natural transformation whose components are total. Restriction categories, 
	restriction functors and restriction transformations form a $2$-category $\cat{rCat}$ \cite{CL1}.  
	
	In the restriction category $\Set_p$, the restriction idempotent on a partial function $f\colon A\rightharpoonup B$
	is given by the identity map on the domain of definition of $f$. Now suppose $g\colon A\rightharpoonup B$ is a partial function
	satisfying the condition $g\bar{f} = f\bar{g}$; in other words, $f$ and $g$ agree where their domains of definition intersect. 
	More generally, in any restriction category, there is a notion of such ``agreements'' between maps from the same hom-set.
	
	\begin{Defn}[Guo]
		Let $\cat{X}$ be a restriction category, and let $f,g \in \cat{X}(A,B)$. We say that $f$ and $g$ are \emph{compatible}
		if $f\bar{g} = g\bar{f}$, and denote this by $f \smile g$. For any set $S\subset \cat{X}(A,B)$, we say that
		$S$ itself is \emph{compatible} if maps in $S$ are pairwise compatible.
	\end{Defn}
	
	The following are a direct consequence of the definition of compatibility.
	
	\begin{Lemma}[Guo]\label{GuoCompatible}
		Let $\cat{X}$ be restriction category and suppose $f,g \in \cat{X}(A,B)$. Then
		\begin{enumerate}
			\item if $f\le g$, then $f \smile g$, and
			\item if $f\smile g$ and $\bar{f}=\bar{g}$, then $f=g$.
		\end{enumerate}
	\end{Lemma}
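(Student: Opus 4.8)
The plan is to verify both statements by direct manipulation of the restriction axioms (R1)--(R4) together with the definitions of $\le$ and $\smile$; no auxiliary machinery is needed.

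For (1), suppose $f \le g$, so that $f = g\bar f$ by definition of the partial order on $\cat{X}(A,B)$. To conclude $f \smile g$ I must check that $f\bar g = g\bar f$; since $g\bar f = f$ by hypothesis, this reduces to showing $f\bar g = f$. Substituting $f = g\bar f$ and then using the commutativity of restriction idempotents (R2) followed by (R1), I compute
$$ f\bar g \;=\; g\bar f\,\bar g \;=\; g\,\bar g\bar f \;=\; g\bar f \;=\; f, $$
which is exactly what is required.

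For (2), suppose $f \smile g$ and $\bar f = \bar g$. Here the strategy is to build a chain of equalities connecting $f$ to $g$: begin with $f = f\bar f$ by (R1), rewrite $\bar f$ as $\bar g$ using the hypothesis, apply the compatibility equation $f\bar g = g\bar f$, rewrite $\bar f$ as $\bar g$ once more, and finish with $g\bar g = g$ by (R1). This yields
$$ f \;=\; f\bar f \;=\; f\bar g \;=\; g\bar f \;=\; g\bar g \;=\; g. $$

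I do not expect any genuine obstacle here: both arguments are a few lines long and invoke only the axioms and the two definitions. The only point needing mild care is bookkeeping — keeping track of which restriction idempotent is being rewritten at each stage, and, in part (1), invoking (R2) at the right moment to commute $\bar f$ and $\bar g$ past one another; part (2) is simply a matter of interleaving the two hypotheses with (R1) in the correct order.
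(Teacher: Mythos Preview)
Your proof is correct and essentially identical to the paper's: for (1) the paper computes $f\bar g = (g\bar f)\bar g = g\bar g\bar f = g\bar f$ using (R2) and (R1), exactly as you do, and for (2) the paper gives precisely the chain $f = f\bar f = f\bar g = g\bar f = g\bar g = g$.
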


	\begin{proof}
		If $f\le g$, then $f=g\bar{f}$ which implies $f\bar{g}= (g\bar{f})\bar{g} = g\bar{g}\bar{f}=g\bar{f}$, or $f\smile g$.
		
		If $f\smile g$ and $\bar{f}=\bar{g}$, then $f= f\bar{f} = f\bar{g} = g\bar{f} = g\bar{g} = g$.
	\end{proof}

	We noted in $\Set_p$ that if two partial functions $f,g\colon A\rightharpoonup B$ satisfied the condition $g\bar{f} = f\bar{g}$, 
	then $f$ and $g$ agreed on the intersection of their domains of definition. From this, we can define a new partial function 
	called the \emph{join} of $f$ and $g$, $f\vee g \colon A \rightharpoonup B$, whose domain of definition is the union of the domains 
	of definition of $f$ and $g$. More generally, in any \emph{join restriction category}, if a family of maps from the same hom-set
	are compatible, then its join exists and satisfies the conditions below.
	
	\begin{Defn}[Guo]
		A join restriction category $\cat{X}$ is a restriction category such that for each $A,B \in\cat{X}$ and compatible set 
		$S\subset\cat{X}(A,B)$, the join $\bigvee_{s\in S} s$ exists with respect to the partial ordering on $\cat{X}(A,B)$, and
		furthermore, satisfies the following conditions:
		\begin{enumerate}[leftmargin=1.5cm,label=(J\arabic*)]
			\item $\ov{\bigvee_{s\in S} s} = \bigvee_{s\in S} \bar{s}$;
			\item $\left(\bigvee_{s\in S} s\right) \circ g = \bigvee_{s\in S} (s \circ g)$
		\end{enumerate}
		for suitable maps $f$ and $g$.
	\end{Defn}
	
	\begin{Prop}[Guo, Lemma 3.18]
		Let $\cat{X}$ be a join restriction category, and let $S\subset\cat{X}(A,B)$ be a compatible set. Then for any
		map $f \colon B\to C$,
			$$ f \circ \left(\bigvee_{s\in S} s\right) =  \bigvee_{s\in S} (f \circ s). $$
	\end{Prop}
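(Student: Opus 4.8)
The plan is to reduce everything to restriction idempotents, where the axiom (R2) lets one move a factor freely from one side of a composite to the other, and only then apply the join axioms (J1) and (J2). First, though, I would check that the right-hand side is even defined, i.e.\ that $\{f\circ s : s\in S\}$ is a compatible family: this amounts to the fact that post-composition with a fixed map preserves compatibility. Indeed, if $s\smile s'$ then, using $\overline{f\circ s'}\le\bar{s'}$ together with $f\circ s\circ\bar{s'}=f\circ(s\bar{s'})=f\circ(s'\bar s)=f\circ s'\circ\bar s$ and the axioms (R1)--(R2), one computes $(f\circ s)\cdot\overline{f\circ s'}=f\circ(s'\bar s)$; by symmetry $(f\circ s')\cdot\overline{f\circ s}=f\circ(s\bar{s'})$, and these coincide because $s\smile s'$. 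Write $e:=\bigvee_{s\in S}s$ and $t:=\bigvee_{s\in S}(f\circ s)$.

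One inequality is straightforward. For each $s\in S$ we have $s\le e$, so $s=e\bar s$ and hence $f\circ s=(f\circ e)\bar s\le f\circ e$; thus $f\circ e$ is an upper bound of $\{f\circ s\}$ and $t\le f\circ e$. For the reverse, instead of arguing $f\circ e\le t$ directly I would show that $\overline{f\circ e}=\bar t$: once this is known, $t\le f\circ e$ gives $t\smile f\circ e$ by Lemma~\ref{GuoCompatible}(1), and then Lemma~\ref{GuoCompatible}(2) forces $t=f\circ e$, which is the assertion.

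The main work is therefore in computing $\overline{f\circ e}$, and the crucial intermediate identity is $e\circ\bar s=s$ for every $s\in S$. This follows from (J2), which gives $e\bar s=\bigvee_{s'\in S}(s'\bar s)$, since compatibility yields $s'\bar s=s\bar{s'}\le s$ for all $s'$, with equality at $s'=s$, so the join collapses to $s$. Consequently $(f\circ e)\bar s=f\circ s$, and applying restrictions and (R3) we get $\overline{f\circ e}\cdot\bar s=\overline{f\circ s}$. Since $\overline{f\circ e}\le\bar e=\bigvee_{s\in S}\bar s$ (using (J1)), this gives $\overline{f\circ e}=\overline{f\circ e}\cdot\bigvee_{s\in S}\bar s$. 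Here is the step I expect to be the only real obstacle: (J2) distributes a join only over \emph{pre}-composition, but in this last expression the factor $\overline{f\circ e}$ sits on the outside. The resolution is that every map involved is a restriction idempotent, so by (R2) it commutes past the join, turning the post-composition into a pre-composition: $\overline{f\circ e}\cdot\bigvee_s\bar s=\bigl(\bigvee_s\bar s\bigr)\cdot\overline{f\circ e}=\bigvee_s(\bar s\cdot\overline{f\circ e})=\bigvee_s(\overline{f\circ e}\cdot\bar s)=\bigvee_s\overline{f\circ s}=\overline{\,\bigvee_s(f\circ s)\,}=\bar t$, where the successive equalities use (R2), (J2), (R2), the identity just established, and (J1). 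This completes the argument; the whole point is to avoid manipulating $f\circ e$ directly and instead channel all the join-theoretic work through restriction idempotents, where pre- and post-composition can be interchanged.
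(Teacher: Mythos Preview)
Your argument is correct. The paper itself gives no proof of this proposition; it is simply cited as \cite[Lemma 3.18]{GUO}, so there is nothing in the paper to compare your approach against. Your route---establishing $t\le f\circ e$ directly, then proving $\overline{f\circ e}=\bar t$ by passing through restriction idempotents so that (R2) lets you convert a post-composition into a pre-composition and apply (J2)---is a clean and self-contained way to obtain the result from the axioms and Lemma~\ref{GuoCompatible}. One minor remark: your derivation of $e\bar s=s$ via (J2) is valid, but note that it is also immediate from the definition of $s\le e$, which you in fact already used in Step~2.
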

	
	If $\cat{X}$ and $\cat{Y}$ are join restriction categories, then a join restriction functor $F\colon\cat{X}\to\cat{Y}$ is a functor
	which preserves the joins in $\cat{X}$. There is a $2$-category $\cat{jrCat}$ of join restriction categories, join restriction functors and
	restriction transformations. Note $\cat{jrCat}$ is a full sub-$2$-category of $\cat{rCat}$.

	Earlier in this section, we mentioned that one of the reasons for introducing restriction categories was to capture the
	notion of partiality of maps through idempotents. This suggests there is some relation between restriction categories and
	categories of partial maps. Indeed, this is the case, but only if we consider categories with a particular family of monomorphisms.

	\begin{Defn}[Cockett-Lack]
		An $\M$-category $(\cat{C},\M)$ consists of an underlying category $\cat{C}$, together with a
		family of monics $\M$ in $\cat{C}$ satisfying the following three conditions:
		\begin{enumerate}
			\item $\M$ contains all isomorphisms in $\cat{C}$;
			\item $\M$ is closed under composition; and
			\item If $m\colon C\to B$ is in $\M$ and $f\colon A\to B$ is any map, then the pullback of 
					$m$ along $f$ exists and is also in $\M$. This is called an $\M$-pullback.
		\end{enumerate}
	\end{Defn}
	A family of monics satisfying the above three conditions is called a \emph{stable system of monics} \cite{CL1}.
	If $m\colon C\to B$ is in $\M$, we refer to $m$ as an $\M$-subobject of $B$. More specifically, an $\M$-subobject
	of $C$ is an equivalence class of $\M$-maps $m\colon B\to C$, where $m \sim n$ if and only if
	there exists an isomorphism $\phi$ such that $m=n\phi$. There is a natural order on 
	these $\M$-subobjects; that is, for $m\colon B\to C$ and $n\colon D\to C$, we say $m\le n$
	if there is a (unique) map $f\colon B\to C$ such that $m=nf$.
	
	\begin{Defn}[Garner-Lin]
		An $\M$-category $(\cat{C},\M)$ is \emph{locally small} if $\cat{C}$ is locally small, and the $\M$-subobjects
		of any $C\in\cat{C}$ form a partially ordered set, which we denote by $\Sub_{\M}(C)$.
	\end{Defn}
	
	For the rest of this paper, our $\M$-categories will be assumed to be locally small unless otherwise stated.
	Clearly $\Sub_{\M}(C)$ has a greatest element, namely $1_C$, and has binary meets (and hence
	finite meets) given by pullback.
	
	If $(\cat{C},\M_{\cat{C}})$ and $(\cat{D},\M_{\cat{D}})$ are $\M$-categories, then a functor $F$ between them is called
	an \emph{$\M$-functor} if $Fm\in\M_{\cat{D}}$ whenever $m\in\M_{\cat{C}}$, and if $F$
	preserves $\M$-pullbacks. Also, given $\M$-functors $F$ and $G$, a natural transformation is called $\M$-cartesian
	if the naturality squares are pullback squares. These $\M$-categories, $\M$-functors
	and $\M$-cartesian natural tranformations form a $2$-category called $\M\Cat$ \cite{CL1}.
		
	Given any $\M$-category $(\cat{C},\M)$, we may form its category of partial maps $\Par(\cat{C},\M)$,
	whose objects are objects of $\cat{C}$, whose morphisms are spans $(m,f)$ (with $m\in\M$) and
	where composition is by pullback. It turns out that for any $\M$-category $(\cat{C},\M)$, its category of partial maps 
	$\Par(\cat{C},\M)$ has a restriction structure, where the restriction of any map $(m,f)$ is given by $(m,m)$. Further, 
	$\Par(\cat{C},\M)$ has the property that all of its restriction idempotents split. 
	
	There is a $2$-functor $\Par\colon\M\Cat\to\cat{rCat}_s$ from the $2$-category of $\M$-categories 
	to the sub-$2$-category of restriction categories whose idempotents split, and also a $2$-functor 
	$\M\Total\colon\cat{rCat}_s\to\M\Cat$; in fact, these $2$-functors are $2$-equivalences \cite{CL1}.

	Given that $\Par(\cat{C},\M)$ is a restriction category for any $\M$-category $(\cat{C},\M)$, it is natural to 
	ask what conditions $(\cat{C},\M)$ must satisfy for $\Par(\cat{C},\M)$ to be a join restriction category. 

	\begin{Defn}
		An $\M$-category $(\cat{C},\M)$ is called \emph{geometric} if $\Par(\cat{C},\M)$ is a join
		restriction category. An $\M$-functor $F\colon(\cat{C},\M_{\cat{C}})\to(\cat{D},\M_{\cat{D}})$
		between geometric $\M$-categories is also called \emph{geometric} if $\Par(F)$ is a join restriction functor.
	\end{Defn}

	There is a $2$-category $\cat{g}\M\Cat$ of geometric $\M$-categories, geometric $\M$-functors
	and $\M$-cartesian natural transformations. With the previous definition, the $2$-equivalence
	between $\cat{rCat}_s$ and $\M\Cat$ restricts back to a $2$-equivalence between the
	$2$-category of split join restriction categories $\cat{jrCat}_s$ and the $2$-category of geometric $\M$-categories:
	
	$$ \bfig
		\square/>`^{ (}->`^{ (}->`>/[\cat{jrCat}_s`\cat{g}\M\Cat`\cat{rCat}_s`\M\Cat.;\simeq```\simeq]
	\efig $$

	In \cite[Theorems 3.3.3, 3.3.5]{GUO}, the author gave a characterisation of these geometric $\M$-categories. However, 
	we will give a different characterisation using only elementary notions of pullbacks and colimits. In proving this
	theorem, we shall first define the \emph{matching diagram} for any family of $\M$-subobjects in $\cat{C}$,
	and also use a restatement of \cite[Lemma 1.6.20]{GUO}.
	
	\begin{Defn}
		Let $(\cat{C},\M)$ be an $\M$-category, and let $M = \{ m_i\colon A_i \to A\}_{i\in I}$ be a family
		of $\M$-subobjects of $A$. Denote the pullback of $m_i$ along $m_j$ as in the
		following diagram:
			$$ \bfig
				\square|alrb|[A_i A_j`A_i`A_j`A.;m_{ji}`m_{ij}`m_i`m_j]
				\efig $$
		We define the \emph{matching diagram} for $M$ as a diagram in $\cat{C}$ on the objects
		$\{ A_i \mid i\in I\} \cup \{ A_iA_j \mid i\ne j\}$, and with morphisms the family $\{ m_{ij} \mid i,j \in I \}$.
	\end{Defn}

	Observe that in any $\M$-category $(\cat{C},\M)$, any family of $\M$-subobjects in $\cat{C}$
	forms a cocone to its matching diagram. 
	
	\begin{Lemma}[Guo]\label{ParCMInequal}
		Suppose $(m,f),(n,g)\colon A\to B$ are two morphisms in the partial map category $\Par(\cat{C},\M)$,
		with $m\colon C\to A$ and $n\colon D\to A$. Then $(m,f)\le(n,g)$ if and only if there exists a (unique) arrow 
		$\phi\colon C\to D$ such that $n\phi=m$ and $g\phi=f$.
	\end{Lemma}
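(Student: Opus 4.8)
The plan is to reduce everything to the definition of the partial order in a restriction category together with one explicit computation of a composite in $\Par(\cat{C},\M)$. Recall that in any restriction category $f\le g$ means precisely $f = g\bar{f}$, and that the restriction of a partial map $(m,f)$ is $(m,m)$; so the assertion $(m,f)\le(n,g)$ unwinds to the single equation $(m,f) = (n,g)\circ(m,m)$ of spans from $A$ to $B$. The first step, then, is to compute the right-hand side. By the definition of composition in $\Par(\cat{C},\M)$ one forms the $\M$-pullback of the right leg $m\colon C\to A$ of $(m,m)$ along the left leg $n\colon D\to A$ of $(n,g)$, obtaining projections $p\colon P\to C$ and $q\colon P\to D$ with $mp = nq$; since $p$ is the pullback of $n\in\M$ it itself lies in $\M$, and the composite is $(n,g)\circ(m,m) = (m\circ p,\, g\circ q)$, a legitimate morphism because $m\circ p\in\M$.

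For the ``if'' direction, suppose $\phi\colon C\to D$ satisfies $n\phi = m$ and $g\phi = f$. Then $C$, equipped with the legs $1_C\colon C\to C$ and $\phi\colon C\to D$, satisfies $m\circ 1_C = n\circ\phi$, and because $n$ is monic this cone is in fact the $\M$-pullback of $m$ along $n$: any competing cone factors through $1_C$ in its first component, and $n$ monic then pins down the second. Taking $p = 1_C$ and $q = \phi$ in the computation above yields $(n,g)\circ(m,m) = (m,\, g\phi) = (m,f)$, i.e. $(m,f)\le(n,g)$. Conversely, if $(m,f)\le(n,g)$ then $(m,f) = (m\circ p,\, g\circ q)$ as morphisms of $\Par(\cat{C},\M)$, so there is an isomorphism of spans $\theta\colon C\to P$ with $mp\theta = m$ and $gq\theta = f$. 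Since $m$ is monic, $p\theta = 1_C$; thus the monomorphism $p$ is a split epimorphism, hence an isomorphism, and $\phi := q\theta = qp^{-1}\colon C\to D$ satisfies $n\phi = nqp^{-1} = mpp^{-1} = m$ and $g\phi = gq\theta = f$. Uniqueness is immediate: if $n\phi = m = n\phi'$ then $\phi = \phi'$ because $n$ is monic.

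I expect the only real obstacle to be bookkeeping rather than mathematics: keeping straight which leg of which span is pulled back, verifying that the relevant projection genuinely lands in $\M$ so that the composite is an honest morphism of $\Par(\cat{C},\M)$, and invoking the two elementary facts that a monic split epimorphism is an isomorphism and that a monomorphism reflects equality of maps into its source. Once the composite $(n,g)\circ(m,m)$ has been identified with $(m\circ p,\, g\circ q)$, both implications are short diagram chases.
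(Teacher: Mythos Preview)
The paper does not actually supply a proof of this lemma; it is stated with attribution to Guo (as a restatement of \cite[Lemma~1.6.20]{GUO}) and then used without further argument in the proof of Theorem~\ref{ParCMJoin}. So there is nothing in the paper to compare your argument against.

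That said, your proof is correct and is the expected one. You unwind the inequality $(m,f)\le(n,g)$ as the equation $(m,f)=(n,g)\circ(m,m)$, compute the right-hand side as $(mp,gq)$ via the $\M$-pullback $(P,p,q)$ of $m$ along $n$, and then argue both directions by elementary manipulation of spans and monics. The only mild redundancy is in the ``if'' direction: you do not strictly need to verify that $(C,1_C,\phi)$ \emph{is} the pullback; it is enough that it is a cone over the cospan $m,n$, which already forces a comparison map $\theta\colon C\to P$ with $p\theta=1_C$ and $q\theta=\phi$, whence $(mp,gq)\cong(m,g\phi)=(m,f)$ as spans. But identifying the pullback explicitly, as you do, is equally valid and arguably cleaner.
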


	\begin{Thm}\label{ParCMJoin}
		An $\M$-category $(\cat{C},\M)$ is geometric if and only if:
		\begin{enumerate}
			\item for any family of $\M$-subobjects $\{ m_i\colon A_i \to A\}_{i\in I}$, the colimit $\bigcup_{i\in I} A_i$ 
					of its matching diagram exists,
			\item the induced map $\bigvee_{i\in I}m_i\colon\bigcup_{i\in I} A_i\to A$ is in $\M$, and
			\item the colimit from (1), $\bigcup_{i\in I} A_i$, is stable under pullback.
		\end{enumerate}
	\end{Thm}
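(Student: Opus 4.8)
The plan is to prove the two directions of the equivalence by translating between the order-theoretic data of $\Par(\cat{C},\M)$ and the $\M$-categorical data of $(\cat{C},\M)$ using Lemmas \ref{ParCMInequal} and \ref{GuoCompatible}. The key observation to set up first is a dictionary: a map $(m,f)\colon A\to B$ in $\Par(\cat{C},\M)$ has restriction $(m,m)$, which is (the partial identity on) the $\M$-subobject $m\colon C\to A$; and two parallel maps $(m_i,f_i)$ are compatible exactly when, pulling back $m_i$ along $m_j$ to get the matching-diagram object $C_iC_j$, the two composites $C_iC_j\to B$ agree. So a compatible family $S=\{(m_i,f_i)\}_{i\in I}$ is precisely a cocone on the matching diagram of $\{m_i\}_{i\in I}$ whose vertex is $B$ — or rather, the ``$f$-part'' of such a cocone, sitting over the canonical cocone $\{m_i\}$ with vertex $A$. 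I would record this correspondence carefully, since both directions of the proof run through it.

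For the ``if'' direction, assume (1)--(3) and let $S=\{(m_i,f_i)\}_{i\in I}$ be a compatible family in $\Par(\cat{C},\M)(A,B)$. By (1) the colimit $\bigcup_i C_i$ of the matching diagram exists; by the compatibility-as-cocone observation the $f_i$ induce a unique $f\colon\bigcup_i C_i\to B$, and the $m_i$ induce $\bigvee_i m_i\colon\bigcup_i C_i\to A$, which lies in $\M$ by (2). I claim $(\bigvee_i m_i, f)$ is the join $\bigvee_{s\in S} s$. That it is an upper bound follows from Lemma \ref{ParCMInequal} using the coprojections $C_i\to\bigcup_i C_i$; that it is least follows because any upper bound $(n,g)$ supplies, via Lemma \ref{ParCMInequal}, a compatible cocone on the matching diagram factoring through $n$, hence a map out of the colimit. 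It then remains to verify axioms (J1) and (J2): (J1) says $\ov{\bigvee s}=\bigvee\bar s$, i.e. $\bigvee_i m_i$ as an $\M$-subobject is the colimit $\bigcup_i C_i$ with its structure map — immediate from the construction; and (J2), stability of joins under precomposition, is exactly where hypothesis (3) is used, since precomposing by $(p,h)\colon A'\to A$ pulls the whole matching diagram and its colimit back along $h$ (after a meet with $p$), and stability of the colimit under pullback lets us identify the result with the matching-diagram colimit of the pulled-back family.

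For the ``only if'' direction, assume $(\cat{C},\M)$ is geometric and let $\{m_i\colon A_i\to A\}_{i\in I}$ be a family of $\M$-subobjects. Regard each $m_i$ as the total map $(m_i,m_i)\colon A_i\to A$... more usefully, work in $\Par(\cat{C},\M)(A,A)$ with the restriction idempotents $(m_i,m_i)$: these are pairwise compatible (two restriction idempotents are always compatible), so their join $e=\bigvee_i (m_i,m_i)$ exists. Since restriction idempotents split in $\Par(\cat{C},\M)$, $e$ splits through some object, which I will exhibit as $\bigcup_i A_i$; chasing the splitting and using Lemma \ref{ParCMInequal} together with the universal property of the join shows $\bigcup_i A_i$ carries a cocone under the matching diagram that is universal, giving (1), and that the comparison map $\bigcup_i A_i\to A$ underlying $e$ is in $\M$, giving (2). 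For (3), stability under pullback, take any $f\colon A'\to A$ and precompose: (J2) for the join restriction structure says $e\circ(f$-induced partial map$)=\bigvee_i\big((m_i,m_i)\circ(\cdots)\big)$, and unwinding both sides via the partial-map composition (which is pullback) identifies the pullback of $\bigcup_i A_i$ along $f$ with the colimit of the pulled-back matching diagram. The main obstacle I anticipate is bookkeeping in this last step: one must check that partial-map composition, which mixes a pullback with a meet of $\M$-subobjects, interacts correctly with the matching diagram so that the ``pullback of the colimit'' and the ``colimit of the pullbacks'' really are being compared along the same cocone — i.e. that no spurious identifications or missing compatibility conditions appear when restricting the indexing family. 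Handling the edge cases (the empty family, giving the initial object as a stable colimit and hence bottom elements $\bot$ in each hom-poset; and singleton families) should be noted but is routine.
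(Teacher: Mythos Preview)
Your proposal is correct and follows essentially the same strategy as the paper's proof: both directions run through the dictionary between compatible families and cocones on the matching diagram, and the only-if direction likewise extracts $\bigcup_i A_i$ from the join $\bigvee_i(m_i,m_i)$ (the paper identifies this join directly as a span $(\mu,\mu)$ with $\mu\in\M$ rather than invoking idempotent splitting, but the content is identical). For the bookkeeping you flag in verifying (J2), the paper sidesteps it neatly by factoring an arbitrary $(x,y)$ as $(1,y)(x,1)$: precomposition with $(1,y)$ is pure pullback along $y$, handled by hypothesis (3), while precomposition with the total mono $(x,1)$ leaves the matching diagram unchanged since $x$ is monic.
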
 
	
	\begin{proof}
		We begin by proving the \emph{if} direction. Suppose $\{(m_i,f_i)\}_{i\in I}$ is a compatible
		family of maps from $A$ to $B$ in $\Par(\cat{C},\M)$, and so there is a unique map 
		$\mu=\bigvee_{i\in I}m_i\colon \bigcup_{i\in I} A_i\to A$ in $\M$. Now compatibility of $\{(m_i,f_i)\}_{i\in I}$ 
		means that the family $\{f_i\}_{i\in I}$ is a cocone to the matching diagram for $\{m_i\}_{i\in I}$
		\cite[Lemma 3.1.4]{GUO}. This induces a unique map $\gamma\colon \bigcup_{i\in I} A_i\to B$.
		$$ \bfig 
			\square|almm|<800,800>[A_i A_j`A_i`A_j`A;m_{ji}`m_{ij}`m_i`m_j]
			\morphism(0,0)<500,300>[A_j`\bigcup_{i\in I} A_i;a_j]
			\morphism(800,800)<-300,-500>[A_i`\bigcup_{i\in I} A_i;a_i]
			\morphism(500,300)|m|/-->/<300,-300>[\bigcup_{i\in I} A_i`A;\mu]
			\morphism(800,800)/{@{>}@/^15pt/}/<200,-1000>[A_i`B.;f_i]
			\morphism(0,0)|b|/{@{>}@/_10pt/}/<1000,-200>[A_j`B.;f_j]
			\morphism(500,300)|m|/{@{-->}@/^20pt/}/<500,-500>[\bigcup_{i\in I} A_i`B.;\gamma]
		\efig $$
		
		We claim that $(\mu,\gamma) = \bigvee_{i\in I} (m_i,f_i)$. To see this, first observe that
		$(m_i,f_i)\le(\mu,\gamma)$ for all $i\in I$ by applying Lemma~\ref{ParCMInequal}, since 
		$\mu a_i=m_i$ and $\gamma a_i=f_i$ by construction. Now suppose for each $i\in I$, we have
		$(m_i,f_i)\le(u,v)$, where $u\colon D\to A$ is a map in $\cat{C}$. This means that for each $i$, 
		there is a unique $\beta_i\colon C_i\to D$ such that $m_i=u\beta_i$ and $f_i=v\beta_i$. Since
		$m_i m_{ji} = m_j m_{ij}$ by construction, this implies that $\beta_i m_{ji} = \beta_j m_{ij}$
		(as $u$ is monic). In other words, the family $\{ \beta_i\}_{i\in I}$ is a cocone to the matching diagram for
		$\{m_i\}_{i\in I}$. Therefore, there exists a unique map $\delta\colon \bigcup_{i\in I} A_i\to D$
		such that $\beta_i = \delta a_i$, for all $i\in I$.
		
		Since $m_i = u\beta_i = (u\delta) a_i$ and $f_i = v\beta_i = (v\delta) a_i$, by uniqueness, we must
		have $\mu = u\delta$ and $\gamma = v\delta$ (as $\mu$ and $\gamma$ are the only maps
		satisfying the conditions $\mu a_i=m_i$ and $\gamma a_i =f_i$). Hence, $(\mu,\gamma)\le (u,v)$
		by Lemma~\ref{ParCMInequal}.
		
		To see that our definition of $(\mu,\gamma)$ satisfies (J1), note that by construction, 
		$(\mu,\mu) = \bigvee_{i\in I} (m_i,m_i)$, which means
		$$ \ov{\bigvee_{i\in I} (m_i,f_i)} = \ov{(\mu,\gamma)}= (\mu,\mu)=\bigvee_{i\in I} (m_i,m_i)
			= \bigvee_{i\in I} \ov{(m_i,f_i)}. $$
		It remains to show that $(\mu,\gamma)$ also satisfies (J2). So let $(x,y)\colon X\to A$ be a map
		in $\Par(\cat{C},\M)$. We need to show $\bigvee_{i\in I} [(m_i,f_i)(x,y)] = (\mu,\gamma)(x,y)$,
		or alternatively, $\bigvee_{i\in I} [(m_i,f_i)(1,y)] = (\mu,\gamma)(1,y)$ and
		$\bigvee_{i\in I} [(m_i,f_i)(x,1)] = (\mu,\gamma)(x,1)$ since $(x,y)=(1,y)(x,1)$.
		Now as composition in $\Par(\cat{C},\M)$ is the same as pulling back in $\cat{C}$, the 
		statement $\bigvee_{i\in I} [(m_i,f_i)(1,y)] = (\mu,\gamma)(1,y)$ is equivalent to 
		$y^*(\bigvee_{i\in I} m_i) = \bigvee_{i\in I} y^*(m_i)$, which is true as colimits are stable under 
		pullback by assumption. To show $\bigvee_{i\in I} [(m_i,f_i)(x,1)] = (\mu,\gamma)(x,1)$, simply
		note that the family $\{ x m_i \}_{i\in I}$ gives rise to the same matching diagram as for $\{m_i\}_{i\in I}$.
		
		In the \emph{only if} direction, let $\{m_i\colon A_i \to A\}_{i\in I}$ be a family of 
		$\M$-subobjects of $A$. As $\Par(\cat{C},\M)$ is a join restriction category, denote 
		$(\mu,\mu)=\bigvee_{i\in I} (m_i,m_i)$, where $\mu\colon \bigcup_{i\in I} A_i\to A$. Note that
		$\mu\in \M$ by definition. Also, since $(m_i,m_i)\le (\mu,\mu)$ for all $i\in I$, there exists 
		a unique $a_i\colon A_i\to \bigcup_{i\in I} A_i$ (for each $i\in I$) such that $m_i = \mu a_i$. Observe that 
		each $a_i\in \M$ as $a_i$ is a pullback of $m_i$ along $\mu$. We now show that the family 
		$\{a_i\}_{i\in I}$ is a colimit to the matching diagram for $\{m_i\}_{i\in I}$.
		
		Clearly $\{a_i\}_{i\in I}$ is a cocone to the matching diagram. Now let $\{b_i\colon A_i\to B\}_{i\in I}$ be 
		a cocone to the same matching diagram; that is, $b_i m_{ji} = b_j m_{ij}$ for each pair $i,j\in I$. But as this implies
		that the family $\{(m_i,b_i)\}_{i\in I}$ is compatible, we may take their join, which we
		denote by $(s,t) = \bigvee_{i\in I}(m_i,b_i)$. By join restriction axioms,
			$$ (s,s) = \ov{(s,t)} = \ov{\bigvee_{i\in I} (m_i,b_i)} 
					= \bigvee_{i\in I} (m_i,m_i) = (\mu,\mu), $$
		which means $s=\mu$ (up to isomorphism). But because $(m_i,b_i) \le (s,t) = (\mu,t)$,
		there exists an $\alpha_i$ such that $m_i = \mu \alpha_i$ and $b_i = t\alpha_i$ (for 
		every $i\in I$). However, $a_i$ is the only map such that $m_i = \mu a_i$. Therefore,
		we must have $\alpha_i = a_i$, which in turn implies that $b_i = ta_i$ for all $i\in I$. We
		need to show that $t$ is in fact the unique map with this property.
		
		So suppose $t'$ also satisfies the condition $b_i = t'a_i$. Then $(m_i,b_i)\le(\mu,t')$ for
		all $i\in I$, which means $(\mu,t) = \bigvee_{i\in I}(m_i,b_i) \le (mu,t')$. Therefore,
		$t=t'$ and so $\{a_i\}_{i\in I}$ is indeed the required colimit to the matching diagram for $\{m_i\}_{i\in I}$.
		
		Observe that by the previous argument, the family $\{m_i\}_{i\in I}$ will be a colimit
		to its matching diagram if and only if $\bigvee_{i\in I}(m_i,m_i) = (1,1)$ if and only if $\mu = 1$. With this 
		observation, it is easy to show that the colimit $\bigcup_{i\in I} A_i$ is stable under pullback by noting that pullbacks 
		in $\cat{C}$ are the same as composition in $\Par(\cat{C},\M)$ and applying join restriction axioms.		
	\end{proof}
	
	\begin{Rk}
		Substituting $I$ to be the empty set in 
		the above theorem tells us that if $(\cat{C},\M)$ is a geometric $\M$-category, then $\cat{C}$ must 
		have a strict initial object $0$, and that maps $0\to A$ are in $\M$ (for all $A\in\cat{C}$).
	\end{Rk}
	
	\begin{Ex}
		Every Grothendieck topos together with all monos is a geometric $\M$-category. This follows from 
		a generalisation of \cite[Proposition 1.4.3]{JOHNSTONE}. In particular, for every category $\cat{C}$ and 
		site $(\cat{C},J)$, the $\M$-categories $(\PSh(\cat{C}),\mbb{M})$ and $(\Sh(\cat{C}),\mbb{M})$ are geometric, 
		where $\mbb{M}$ denotes all monos in the respective categories.
	\end{Ex}
	
	The following result follows immediately from Theorem~\ref{ParCMJoin}.
	
	\begin{Prop}\label{GeomMFunctor}
		An $\M$-functor $F\colon(\cat{C},\M_{\cat{C}})\to(\cat{D},\M_{\cat{D}})$ between geometric $\M$-categories is 
		geometric if and only if $F$ preserves colimits of matching diagrams.
	\end{Prop}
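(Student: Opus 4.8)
The plan is to unwind ``$F$ is geometric'' --- i.e.\ that $\Par(F)$ is a join restriction functor --- against the explicit description of joins in a partial map category extracted from the proof of Theorem~\ref{ParCMJoin}. Since $F$ is an $\M$-functor, $\Par(F)$ is well defined (sending $(m,f)$ to $(Fm,Ff)$) and is automatically a restriction functor, so the whole question is whether $\Par(F)$ preserves joins of compatible families. The one preliminary I would record is that, because $F$ preserves $\M$-maps and $\M$-pullbacks, applying $F$ to the matching diagram $D$ of a family of $\M$-subobjects $\{m_i\colon A_i\to A\}_{i\in I}$ produces exactly the matching diagram of $\{Fm_i\colon FA_i\to FA\}_{i\in I}$; I will write $FD$ for this diagram. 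In particular, ``$F$ preserves $\colim D$'' means precisely that $F(\bigcup_i A_i)$, equipped with the injections $\{Fa_i\}$, is a colimit of $FD$, where $a_i\colon A_i\to\bigcup_i A_i$ are the colimit injections. Since every matching diagram is the matching diagram of some family of $\M$-subobjects, and all such colimits exist by geometricity of $\cat{C}$ and $\cat{D}$ (Theorem~\ref{ParCMJoin}(1)), it suffices to treat one such diagram at a time.

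For the \emph{if} direction, assume $F$ preserves colimits of matching diagrams and let $\{(m_i,f_i)\}_{i\in I}$ be a compatible family in $\Par(\cat{C},\M)(A,B)$. By the construction in the proof of Theorem~\ref{ParCMJoin}, its join is $(\mu,\gamma)$, where $\mu\colon\bigcup_i A_i\to A$ and $\gamma\colon\bigcup_i A_i\to B$ are the maps out of $\colim D$ induced by the cocones $\{m_i\}$ and $\{f_i\}$ (the latter a cocone by compatibility), so $\mu a_i=m_i$ and $\gamma a_i=f_i$. As $\Par(F)$ is a restriction functor it preserves compatibility, so $\{(Fm_i,Ff_i)\}$ is compatible and, $\cat{D}$ being geometric, has a join computed the same way from $\colim FD$. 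But $F$ preserves $\colim D$, so $F(\bigcup_i A_i)$ with injections $\{Fa_i\}$ is a colimit of $FD$; running the cocones $\{Fm_i\}$ and $\{Ff_i\}$ through it, and using $F\mu\circ Fa_i=F(\mu a_i)=Fm_i$ and $F\gamma\circ Fa_i=Ff_i$, identifies the induced maps as $F\mu$ and $F\gamma$. Hence $\bigvee_i\Par(F)(m_i,f_i)=(F\mu,F\gamma)=\Par(F)(\mu,\gamma)=\Par(F)\bigl(\bigvee_i(m_i,f_i)\bigr)$, so $\Par(F)$ preserves joins.

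For the \emph{only if} direction, assume $F$ is geometric and fix a family of $\M$-subobjects $\{m_i\colon A_i\to A\}_{i\in I}$ with matching diagram $D$, colimit $\bigcup_i A_i$, injections $a_i$, and induced map $\mu\colon\bigcup_i A_i\to A$. The restriction idempotents $\{(m_i,m_i)\}$ form a compatible family (restriction idempotents commute), with join $(\mu,\mu)$ by Theorem~\ref{ParCMJoin}. Applying the join restriction functor $\Par(F)$ gives $(F\mu,F\mu)=\bigvee_i(Fm_i,Fm_i)$, and the right-hand side equals $(\mu',\mu')$, where $\mu'\colon\colim FD\to FA$ is the map induced by the cocone $\{Fm_i\}$, with colimit injections $c_i$ satisfying $\mu'c_i=Fm_i$. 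Thus $F\mu$ and $\mu'$ represent the same $\M$-subobject of $FA$, so there is an isomorphism $\theta\colon\colim FD\to F(\bigcup_i A_i)$ with $F\mu\circ\theta=\mu'$. Since $F\mu\circ\theta c_i=\mu'c_i=Fm_i=F\mu\circ Fa_i$ and $F\mu$ is monic, $\theta c_i=Fa_i$ for all $i$; hence $F(\bigcup_i A_i)$ with injections $\{Fa_i\}$ is a colimit of $FD$, i.e.\ $F$ preserves $\colim D$.

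The bookkeeping here is routine: $\Par(F)$ being a well-defined restriction functor, restriction functors preserving compatibility, and the identification of induced maps out of colimits are all immediate. The one point deserving care is the \emph{only if} direction, where one must upgrade the equality of $\M$-subobjects $F\mu\cong\mu'$ to an isomorphism $\theta$ of \emph{colimit cocones} --- that is, verify $\theta c_i=Fa_i$ --- so that $F(\bigcup_i A_i)$ is recognised as a colimit of $FD$ rather than merely abstractly isomorphic to one. That verification, via the monicity of $F\mu$, is the only place where the argument is more than a transcription of Theorem~\ref{ParCMJoin}, and is where I expect whatever small obstacle there is to reside.
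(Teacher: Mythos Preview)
Your proposal is correct and follows exactly the route the paper intends: the paper simply states that the result ``follows immediately from Theorem~\ref{ParCMJoin}'', and your argument is a careful unwinding of that claim, using the explicit description of joins in $\Par(\cat{C},\M)$ as $(\mu,\gamma)$-pairs induced from colimits of matching diagrams. The only substantive step beyond transcription --- upgrading the equality of $\M$-subobjects $F\mu\cong\mu'$ to an identification of colimit cocones via monicity of $F\mu$ --- is handled correctly.
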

	
	 By Theorem~\ref{ParCMJoin}, if $(\cat{C},\M)$ is a geometric $\M$-category, then any family of $\M$-subobjects 
	 $\{m_i\colon A_i\to A\}_{i\in I}$ has a join given by the induced map 
	 $\bigvee_{i\in I}m_i \colon \bigcup_{i\in I} A_i \to A$. In fact:

	\begin{Prop}
		If $(\cat{C},\M)$ is a geometric $\M$-category, then for all $C\in\cat{C}$, $\Sub_{\M}(C)$
		is a complete Heyting algebra and for each $f\colon D\to C$, the function 
		$f^*\colon \Sub_{\M}(C) \to \Sub_{\M}(D)$ preserves joins.
	\end{Prop}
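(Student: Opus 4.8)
The plan is to derive the whole statement from Theorem~\ref{ParCMJoin}, arranging things so that the only substantive input is its pullback-stability clause~(3) and everything else is formal. First I would verify that $\Sub_{\M}(C)$ is a complete lattice: it has a top element $1_C$ and, as already observed, finite meets computed by pullback, so it suffices to produce arbitrary joins. Given a family $\{m_i\colon A_i\to C\}_{i\in I}$ of $\M$-subobjects, Theorem~\ref{ParCMJoin}(1)--(2) supplies the colimit $\bigcup_{i\in I}A_i$ of its matching diagram together with the induced map $\bigvee_{i\in I}m_i\colon\bigcup_{i\in I}A_i\to C$ in $\M$; specialising the ``if'' direction of the proof of Theorem~\ref{ParCMJoin} to $f_i=m_i$ (or, equivalently, transporting along the order-isomorphism between $\Sub_{\M}(C)$ and the poset of restriction idempotents on $C$ in the join restriction category $\Par(\cat{C},\M)$, which follows from Lemma~\ref{ParCMInequal}) shows that $\bigvee_{i\in I}m_i$ is the least upper bound of $\{m_i\}_{i\in I}$ in $\Sub_{\M}(C)$. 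Since a poset with all joins is automatically a complete lattice, $\Sub_{\M}(C)$ is one, with bottom element the strict initial object $0\to C$ of the Remark.

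Next I would show that $f^*\colon\Sub_{\M}(C)\to\Sub_{\M}(D)$ preserves joins for any $f\colon D\to C$. Given $\{m_i\colon A_i\to C\}_{i\in I}$, pulling its entire matching diagram back along $f$ --- that is, forming $A_i\times_C D$ and $(A_iA_j)\times_C D\cong(A_i\times_C D)\times_D(A_j\times_C D)$, the last isomorphism by pullback-pasting --- produces precisely the matching diagram of the pulled-back family $\{f^*m_i\colon A_i\times_C D\to D\}_{i\in I}$. By the stability of $\bigcup_{i\in I}A_i$ under pullback (Theorem~\ref{ParCMJoin}(3)), the pullback of this colimit along $f$ is the colimit of the pulled-back matching diagram, and comparing the two induced maps into $D$ gives $f^*(\bigvee_{i\in I}m_i)=\bigvee_{i\in I}f^*(m_i)$; this is the same computation that verifies (J2) in the proof of Theorem~\ref{ParCMJoin}. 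Preservation of the empty join follows directly from strictness of $0$.

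Finally I would establish the frame distributive law and conclude. Fix $m\colon B\to C$ in $\M$. Composition with $m$ defines a monotone map $\Sigma_m\colon\Sub_{\M}(B)\to\Sub_{\M}(C)$, $k\mapsto m\circ k$, and the universal property of pullbacks yields the adjunction $\Sigma_m\dashv m^*$ (namely $m\circ k\le n\iff k\le m^*n$); being a left adjoint between posets, $\Sigma_m$ preserves all joins. Moreover $\Sigma_m(m^*n)=m\wedge n$, since $m\circ m^*n$ is the pullback of $n$ along $m$ regarded as a subobject of $C$. Hence, using that $m^*$ preserves joins,
$$ m\wedge\bigvee_{i\in I}m_i \;=\; \Sigma_m\bigl(m^*(\textstyle\bigvee_{i\in I}m_i)\bigr) \;=\; \Sigma_m\bigl(\textstyle\bigvee_{i\in I}m^*m_i\bigr) \;=\; \bigvee_{i\in I}\Sigma_m(m^*m_i) \;=\; \bigvee_{i\in I}(m\wedge m_i). $$
A complete lattice satisfying this infinite distributive law is a complete Heyting algebra, with relative pseudocomplement $a\Rightarrow b:=\bigvee\{\,c\mid c\wedge a\le b\,\}$, which finishes the proof. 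The only non-formal ingredient is clause~(3) of Theorem~\ref{ParCMJoin}, and the sole point demanding care is the identification in the second paragraph of the pulled-back matching diagram with the matching diagram of the pulled-back family.
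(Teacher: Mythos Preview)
Your proposal is correct and follows essentially the same route as the paper: joins in $\Sub_{\M}(C)$ come from Theorem~\ref{ParCMJoin}, pullback-stability gives that $f^*$ preserves joins, and distributivity is deduced from the adjunction $\Sigma_m\dashv m^*$ together with $\Sigma_m\circ m^*=m\wedge(-)$. Your write-up is in fact more careful than the paper's, which has the composite written in the wrong order (it writes $m^*\circ(m\circ(-))$ where it means $(m\circ(-))\circ m^*$); you have it right.
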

	
	\begin{proof}
		Let $\{m_i\colon A_i\to C\}_{i\in I}$ be a family of $\M$-subobjects of $C$ and define the join of 
		the family of $\M$-subobjects to be the induced map $\bigcup_{i\in I} A_i \to C$.
		As the colimit $\bigcup_{i\in I} A_i$ is stable under pullback, it follows that 
		$f^*\colon \Sub_{\M}(C) \to \Sub_{\M}(D)$ preserves all joins. Furthermore, since $\Sub_{\M}(C)$ has all joins, it also 
		has all meets, and so it remains to show that joins distribute over meets.
		
		So let $m\colon B\to C$ be any $\M$-subobject. Then $m^*\colon\Sub_{\M}(C) \to \Sub_{\M}(B)$ has
		a left adjoint given by $m \circ (-) \colon \Sub_{\M}(B) \to \Sub_{\M}(C)$. But their composite is 
		$m^* \circ \Big( m \circ (-) \Big) = m \wedge (-)$, and both maps preserve joins. Therefore joins distribute over meets, 
		and $\Sub_{\M}(C)$ is a complete Heyting algebra.
	\end{proof}
	
\section{Free cocompletion of geometric \texorpdfstring{$\M$}{M}-categories}\label{sec3}
	
	In this section, we continue our discussion of geometric $\M$-categories. The goal of this section will be to show that
	every small geometric $\M$-category may be freely completed to a cocomplete geometric $\M$-category. Recall that
	an $\M$-category $(\cat{C},\M)$ is called \emph{cocomplete} if $\cat{C}$ is cocomplete and the inclusion
	$\cat{C} \hookrightarrow \Par(\cat{C},\M)$ preserves colimits \cite{GL}. The way we will show this is as follows. 
	
	First, we show that for every small geometric $\M$-category $(\cat{C},\M)$, its underlying category $\cat{C}$ may be given a 
	Grothendieck topology $J$. This allows us to form an $\M$-category of sheaves $(\Sh(\cat{C}),\M_{\Sh(\cat{C})})$ on this 
	site $(\cat{C},J)$, for some class of monics $\M_{\Sh(\cat{C})}$ in $\Sh(\cat{C})$. We then show that this $\M$-category 
	$(\Sh(\cat{C}),\M_{\Sh(\cat{C})})$ is the free cocompletion of any geometric $\M$-category $(\cat{C},\M)$.
	
	Let us begin with the following proposition.
	
	\begin{Prop}\label{Basis}
		Let $(\cat{C},\M)$ is a geometric $\M$-category and let $C\in\cat{C}$. Then there is a Grothendieck topology
		on $\cat{C}$ whose basic covers of $C\in\cat{C}$ are given by families of the following form:
			$$ \{ a_i \colon C_i\to C \mid a_i\in\M, \bigvee_{i\in I} a_i = 1 \text{ in } \Sub_{\M}(C) \}_{i\in I} $$
		Equivalently, by Theorem~\ref{ParCMJoin}, 
		$\{a_i\}_{i\in I}$ is a basic cover of $C$ if $C$ is the colimit of a matching diagram for some family of 
		$\M$-subobjects.
	\end{Prop}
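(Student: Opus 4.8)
The plan is to verify the two axioms of a Grothendieck basis (covering families) for the proposed assignment $C \mapsto \{\text{families } \{a_i \colon C_i \to C\}_{i\in I} \text{ with } a_i \in \M \text{ and } \bigvee_i a_i = 1_C \text{ in } \Sub_{\M}(C)\}$, namely: (i) stability under pullback, i.e. if $\{a_i \colon C_i \to C\}$ is a basic cover and $f \colon D \to C$ is any map, then $\{f^*(a_i) \colon D \times_C C_i \to D\}$ is a basic cover of $D$; and (ii) transitivity (local character), i.e. if $\{a_i \colon C_i \to C\}$ is a basic cover and for each $i$ we have a basic cover $\{b_{ij} \colon C_{ij} \to C_i\}$, then the composite family $\{a_i b_{ij} \colon C_{ij} \to C\}$ is a basic cover of $C$. (I should also note that the maximal family $\{1_C\}$ is a basic cover, which is immediate since $\bigvee$ of the singleton $\{1_C\}$ is $1_C$.) The reason only these axioms need checking, rather than the full sheaf-theoretic apparatus, is that a basis generates a genuine Grothendieck topology once these closure conditions hold.

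For (i), the key input is the last Proposition of Section~2: since $(\cat{C},\M)$ is geometric, for each $f \colon D \to C$ the pullback functor $f^* \colon \Sub_{\M}(C) \to \Sub_{\M}(D)$ preserves all joins. Hence $\bigvee_i f^*(a_i) = f^*\!\bigl(\bigvee_i a_i\bigr) = f^*(1_C) = 1_D$ in $\Sub_{\M}(D)$, and each $f^*(a_i)$ lies in $\M$ because $\M$ is a stable system of monics; so the pulled-back family is again a basic cover. This step is essentially a one-line computation once the join-preservation result is invoked.

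For (ii), I would argue as follows. By the geometricity characterisation (Theorem~\ref{ParCMJoin}) together with the last Proposition of Section~2, $\Sub_{\M}(C)$ is a complete Heyting algebra and, for each $i$, post-composition $a_i \circ (-) \colon \Sub_{\M}(C_i) \to \Sub_{\M}(C)$ is left adjoint to $a_i^*$ and therefore preserves joins. From $\bigvee_j b_{ij} = 1_{C_i}$ in $\Sub_{\M}(C_i)$ we get $\bigvee_j (a_i b_{ij}) = a_i \circ \bigl(\bigvee_j b_{ij}\bigr) = a_i \circ 1_{C_i} = a_i$ as $\M$-subobjects of $C$. Taking the join over $i$ and using that joins in a complete lattice can be computed iteratively, $\bigvee_{i,j}(a_i b_{ij}) = \bigvee_i\bigl(\bigvee_j a_i b_{ij}\bigr) = \bigvee_i a_i = 1_C$. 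Since $\M$ is closed under composition, each $a_i b_{ij} \in \M$, so the composite family is a basic cover. This establishes the basis axioms and hence produces the desired Grothendieck topology $J$.

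The main obstacle is not any single step but making sure the identifications in (ii) are carried out correctly at the level of $\M$-subobjects (equivalence classes of $\M$-maps) rather than on the nose: one must check that $a_i \circ (-)$ is genuinely monotone and well-defined on $\Sub_{\M}$, that its left-adjointness to $a_i^*$ (which rests on the $\M$-pullback axiom and local smallness) indeed forces join-preservation, and that associativity of joins in the complete Heyting algebra $\Sub_{\M}(C)$ justifies the iterated-join manipulation. The final sentence of the proposition, asserting the equivalence of the two descriptions of basic covers, is then immediate from Theorem~\ref{ParCMJoin}, since $\{a_i\}_{i\in I}$ with $a_i \in \M$ satisfies $\bigvee_i a_i = 1_C$ precisely when the canonical map $\bigcup_i A_i \to C$ out of the colimit of the matching diagram is an isomorphism, i.e. when $C$ itself is that colimit.
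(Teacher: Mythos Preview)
Your proposal is correct and follows essentially the same argument as the paper: identity cover, stability via join-preservation of pullback, and transitivity via $\bigvee_{i,j} a_i b_{ij} = \bigvee_i a_i \circ (\bigvee_j b_{ij}) = \bigvee_i a_i = 1$. The only difference is that you spell out why $a_i \circ (-)$ preserves joins (as a left adjoint to $a_i^*$), whereas the paper simply writes down the equality; this extra care is welcome but does not change the route.
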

	
	\begin{proof}
		Clearly $\{1_C\colon C\to C\}$ is a basic cover of $C$ as $1_C\in\M$. If $\{ a'_i\colon C_i \times_C D \to D\}_{i\in I}$
		is the pullback of $\{a_i\}_{i\in I}$ along $f\colon D\to C$, then $\{a'_i\}_{i\in I}$ is also a basic cover of $D$
		as colimits are stable under pullback.
		
		Finally, for each $i\in I$ and $C_i$, suppose $\{ b_{ij}\}_{j \in J_i}$ is a basic cover of $C_i$. We need
		to show that $\{a_i \circ b_{ij}\}_{i\in I, j\in J_i}$ is a cover of $C$. First note that 
		$a_i \circ b_{ij}\in\M$ for each $i\in I,j\in J_i$ as $\M$ is closed under composition. Then since
		\begin{align*}
			\bigvee_{i\in I,j\in J_i} a_i b_{ij} = \bigvee_{i\in I} a_i \circ \left( \bigvee_{j\in J_i} b_{ij} \right)
					= \bigvee_{i\in I}a_i = 1,
		\end{align*}
		the family $\{a_i \colon C_i\to C\}_{i\in I}$ above describes a basic cover of $C$ for each $C\in\cat{C}$.
	\end{proof}
	
	\begin{Lemma}
		Suppose $(\cat{C},\M)$ is a small geometric $\M$-category and let $J$ be the topology
		generated by the basis described in Proposition~\ref{Basis}. Then $J$ is subcanonical.
	\end{Lemma}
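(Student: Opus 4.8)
The plan is to show that for every $C \in \cat{C}$, the representable presheaf $\cat{C}(-,C)$ is a sheaf for the topology $J$ generated by the basis of Proposition~\ref{Basis}. Since $J$ is generated by a basis, it suffices to check the sheaf condition against basic covers $\{a_i \colon C_i \to C\}_{i\in I}$, i.e.\ families of $\M$-subobjects with $\bigvee_{i\in I} a_i = 1$ in $\Sub_{\M}(C)$, which by Theorem~\ref{ParCMJoin} means precisely that $\{a_i\}_{i\in I}$ exhibits $C$ as the colimit of the matching diagram for $\{a_i\}_{i\in I}$. So the statement ``$\cat{C}(-,D)$ is a sheaf for this cover'' unwinds to: every matching family $(g_i \colon C_i \to D)_{i\in I}$ — meaning $g_i \circ a_{ji} = g_j \circ a_{ij}$ on the pullbacks $C_i \times_C C_j$ — extends uniquely to a map $g \colon C \to D$ with $g \circ a_i = g_i$.

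First I would set up the correspondence carefully: a matching family for the sieve generated by $\{a_i\}$ restricts to a compatible cocone on the matching diagram (the condition $g_i \circ a_{ji} = g_j \circ a_{ij}$ is exactly the cocone condition for the objects $C_i$ and $C_iC_j$), and conversely such a cocone determines a matching family on the whole sieve because any element of the sieve factors through some $a_i$ and the $a_i$ are monic, so the value is forced and well-defined by the pullback-compatibility. Then, since $C = \colim$ of that matching diagram by Theorem~\ref{ParCMJoin}(1), the universal property of the colimit yields a unique $g \colon C \to D$ with $g \circ a_i = g_i$; this $g$ is the required amalgamation, and its uniqueness as an amalgamation follows from uniqueness in the colimit universal property together with the fact that the $a_i$ jointly cover (so two maps agreeing after precomposition with all $a_i$ must agree, as the $a_i$ are jointly epic being a colimit cocone). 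One minor wrinkle to handle is the empty cover of the strict initial object $0$ (Remark after Theorem~\ref{ParCMJoin}): the empty matching family must extend uniquely to $0 \to D$, which holds precisely because $0$ is initial — worth a sentence.

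The main obstacle I expect is the bookkeeping in passing between the two descriptions of a matching family, specifically checking that a cocone on the matching diagram really does glue to a consistent matching family on the \emph{full} generated sieve (not just on the generators $a_i$), since elements of the sieve are arbitrary composites $a_i \circ h$ and one must verify the matching condition for all pairs of such, using only the pullback squares among the $a_i$ and the universal property of those pullbacks — this is where monicity of the $a_i$ does the real work. Once that reduction is clean, invoking Theorem~\ref{ParCMJoin} makes the existence and uniqueness of the amalgamation essentially immediate, so the proof should be short.
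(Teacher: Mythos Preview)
Your proposal is correct and follows essentially the same approach as the paper: reduce to basic covers, identify a matching family for $\yon D$ with a cocone on the matching diagram, and invoke the colimit universal property from Theorem~\ref{ParCMJoin} to get the unique amalgamation. The paper dispatches your ``main obstacle'' (passing between matching families on the generated sieve and compatible data on the basic cover) by simply citing \cite[p.~123, Proposition~1]{MM}, so the extra bookkeeping you anticipate is absorbed into a standard reference rather than spelled out; your remark on the empty cover is a harmless addition the paper omits.
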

	
	\begin{proof}
		We need to show all representable presheaves on $\cat{C}$ are sheaves on the site $(\cat{C},J)$.
		So let $D\in\cat{C}$ and consider the representable $\yon D$. By Proposition 1 in
		\cite[p.123]{MM}, $\yon D$ is a sheaf if and only if for any basic cover 
		$R=\{a_i\colon C_i\to C\}_{i\in I}$, any matching family 
		$\{x_i \in (\yon D)(C_i)\}_{i\in I}$ for $R$ has a unique amalgamation. Consider the following
		pullback square (for some $i,j \in I$):
		$$ \bfig
			\square[C_i\times_C C_j`C_j`C_i`C.;m_{ij}`m_{ji}`a_j`a_i]
		\efig $$
		Let $\{x_i \in (\yon D)(C_i)\}_{i\in I}$ be a matching family for $R$. By definition, this implies
		that $x_i \circ m_{ji} = x_j \circ m_{ij}$, or that $\{x_i\}_{i\in I}$ is a cocone to the diagram
		for which $\{a_i\}_{i\in I}$ is a colimit. This means there exists a unique $x\colon C\to D$
		such that $x\circ a_i = x_i$ for all $i\in I$. In other words, this $x$ is the unique amalgamation
		of $\{x_i\}_{i\in I}$. Hence, the representable $\yon D$ is a sheaf, and $J$ is subcanonical.
	\end{proof}

	\subsection{Partial map category of sheaves}
	Recall from \cite{CL1} that if $(\cat{C},\M)$ is an $\M$-category, then there is an
	$\M$-category $\PSh_{\M}(\cat{C})$, or $(\PSh(\cat{C}),\M_{\PSh{(\cat{C})}})$, where 
	$\mu\colon P\To Q$ is in $\M_{\PSh(\cat{C})}$ if for every $\alpha\colon R\To Q$, there is an 
	$m \colon A\to B$ in $\M$ making the following a pullback square:
		$$ \bfig
			\square[\yon A`P`\yon B`Q.;`\yon m`\mu`\alpha]
		\efig $$
	
	We now consider the $\M$-category of sheaves on $\cat{C}$.
	
	\begin{Defn}
		Suppose $(\cat{C},\M)$ is a small geometric $\M$-category. Then there is an $\M$-category
		$\Sh_{\M}(\cat{C})$, or $(\Sh(\cat{C}),\M_{\Sh(\cat{C})})$, where $\mu\colon P\To Q$
		is in $\M_{\Sh(\cat{C})}$ if and only if for every map $\alpha\colon\sheaf\yon D\To Q$ in 
		$\Sh(\cat{C})$, there is a map $m\colon C\to D$ in $\M$ making the following a pullback square:
		$$ \bfig
			\square[\sheaf\yon C`P`\sheaf\yon D`Q,;`\sheaf\yon m`\mu`\alpha]
		\efig $$
		where $\sheaf\colon \PSh(\cat{C})\to\Sh(\cat{C})$ is the associated sheaf functor.
	\end{Defn}
	
	Note that $\M_{\Sh(\cat{C})} = \M_{\PSh(\cat{C})} \cap \Sh(\cat{C})$. 
	
	Recall the geometric $\M$-categories 
	$(\Sh(\cat{C}),\mbb{M})$ and $(\PSh(\cat{C}),\mbb{M})$, where 
	$\Sh(\cat{C})$ is the category of sheaves on some small site $(\cat{C},J)$ and $\mbb{M}$ represents all monics in 
	$\Sh(\cat{C})$ and $\PSh(\cat{C})$. Then the associated sheaf functor $\sheaf\colon \PSh(\cat{C})\to\Sh(\cat{C})$ is 
	also a geometric $\M$-functor from $(\PSh(\cat{C}),\mbb{M})$ to $(\Sh(\cat{C}),\mbb{M})$, as it not only 
	preserves all colimits, but also all finite limits.
	
	\begin{Lemma}\label{PreserveJoinM}
		Suppose $(\cat{C},\M)$ is a small geometric $\M$-category. Then the $\M$-functor 
		$\sheaf\yon\colon (\cat{C},\M) \to(\Sh(\cat{C}),\mbb{M})$ is geometric.
	\end{Lemma}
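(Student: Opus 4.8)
The plan is to apply Proposition~\ref{GeomMFunctor}: since $\sheaf\yon$ is an $\M$-functor between geometric $\M$-categories, it is geometric as soon as it preserves colimits of matching diagrams. So fix a family of $\M$-subobjects $\{m_i\colon A_i\to A\}_{i\in I}$ of $A$ in $\cat{C}$, write $\Delta$ for its matching diagram, and let $U=\bigcup_{i\in I}A_i$ be the colimit of $\Delta$, which exists by Theorem~\ref{ParCMJoin}; call $a_i\colon A_i\to U$ the colimit legs and $\mu=\bigvee_{i\in I}m_i\colon U\to A$ the induced map, so that $\mu a_i=m_i$. Recall from the proof of Theorem~\ref{ParCMJoin} that $\mu$ and every $a_i$ lie in $\M$; in particular $\mu$ is monic. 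I have to show that $(\sheaf\yon U,\{\sheaf\yon a_i\}_{i\in I})$ is the colimit in $\Sh(\cat{C})$ of the diagram $\sheaf\yon\Delta$.

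The crux is to recognise this colimit as a covering sieve. First, because $\mu$ is monic one has $m_i p=m_j q$ if and only if $a_i p=a_j q$ (for $p\colon D\to A_i$, $q\colon D\to A_j$), so $A_i\times_A A_j\cong A_i\times_U A_j$ and the matching diagram of the family $\{a_i\colon A_i\to U\}$ of $\M$-subobjects of $U$ is again $\Delta$; as $\{a_i\}$ is by construction the colimit cocone of $\Delta$, the observation at the end of the proof of Theorem~\ref{ParCMJoin} gives $\bigvee_{i\in I}a_i=1$ in $\Sub_{\M}(U)$. Thus $\{a_i\colon A_i\to U\}$ is a basic cover of $U$ in the sense of Proposition~\ref{Basis}, so the sieve $R\hookrightarrow\yon U$ it generates is $J$-covering. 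Next, since the Yoneda embedding preserves pullbacks, $\yon\Delta$ is the matching diagram of $\{\yon m_i\}$ in $\PSh(\cat{C})$; computing its colimit $P$ pointwise as a coequaliser in $\Set$, and using once more that $m_i p=m_j q$ if and only if $a_i p=a_j q$, one checks that the canonical map $P\to\yon U$ induced by the cocone $\{\yon a_i\}$ is a monomorphism whose image is exactly $R$ — that is, an isomorphism $P\cong R$ carrying each colimit leg $\yon A_i\to P$ to the factorisation $\yon A_i\to R\hookrightarrow\yon U$ of $\yon a_i$.

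It remains to apply the associated sheaf functor $\sheaf$. Being a left adjoint, $\sheaf$ preserves all colimits, so the colimit of $\sheaf\yon\Delta$ in $\Sh(\cat{C})$ is $\sheaf P$; and $\sheaf P\cong\sheaf R\cong\sheaf\yon U$, the last isomorphism being the standard fact that the sheafification of the inclusion of a covering sieve is an isomorphism \cite{MM} (recall also that $J$ is subcanonical by the previous lemma, so $\sheaf\yon$ agrees with $\yon$ on objects). Transporting the colimit legs of $\sheaf P$ through these isomorphisms turns each $\sheaf(\yon A_i\to P)$ into $\sheaf\yon a_i$, so $\sheaf\yon$ preserves the colimit of $\Delta$; as $\Delta$ was an arbitrary matching diagram, Proposition~\ref{GeomMFunctor} shows $\sheaf\yon$ is geometric. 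I expect the main obstacle to be the middle step: recognising the presheaf colimit of a matching diagram as the covering sieve generated by the $a_i$. Its only real input beyond diagram bookkeeping is that $\mu=\bigvee_{i\in I}m_i$ is monic, which forces the relation $m_i p=m_j q$ defining the coequaliser to coincide with the relation $a_i p=a_j q$ cutting out $R$ inside $\yon U$.
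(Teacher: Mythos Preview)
Your proof is correct and follows essentially the same approach as the paper: identify the colimit of the matching diagram in $\PSh(\cat{C})$ with the covering sieve $\bigcup_{i\in I}\yon C_i \hookrightarrow \yon(\bigcup_{i\in I}C_i)$, then use that $\sheaf$ is a left adjoint and inverts covering sieves. The paper's version is considerably terser, simply asserting the identification of the covering sieve as the subfunctor $\bigcup_{i\in I}\yon C_i$ where you instead spell out carefully why the presheaf colimit of $\Delta$ computes this union (via the monicity of $\mu$).
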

	
	\begin{proof}
		Let $\{ C_i \to D\}_{i\in I}$ be a family of $\M$-subobjects of $D$ in $\cat{C}$, and
		consider the basic cover $\{ C_i \to \bigcup_{i\in I} C_i \}_{i\in I}$. The associated
		covering sieve (as a subfunctor) is given by 
			\begin{equation}\label{CoveringSieve}
				\bigcup_{i\in I} \yon C_i \to \yon \left(\bigcup_{i\in I} C_i\right)
			\end{equation}
		in $\PSh(\cat{C})$, and the associated sheaf functor $\sheaf$ takes this map to an isomorphism in $\Sh(\cat{C})$. 
		Hence, as $\sheaf$ is a left adjoint, we have $\sheaf\yon \left( \bigcup_{i\in I} C_i\right) \cong \bigcup_{i\in I} \sheaf\yon C_i$.
	\end{proof}
	
	\begin{Thm}
		If $(\cat{C},\M)$ is a small geometric $\M$-category, then $\Sh_{\M}(\cat{C})$ is also geometric.
	\end{Thm}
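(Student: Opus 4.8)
The plan is to check the three conditions of Theorem~\ref{ParCMJoin} for the $\M$-category $\Sh_{\M}(\cat{C}) = (\Sh(\cat{C}),\M_{\Sh(\cat{C})})$. Since $\cat{C}$ is small, $\Sh(\cat{C})$ is a Grothendieck topos, and every Grothendieck topos equipped with all of its monics is a geometric $\M$-category; thus $(\Sh(\cat{C}),\mbb{M})$ is already geometric. Now $\M_{\Sh(\cat{C})}\subseteq\mbb{M}$, and the matching diagram of a family of $\M_{\Sh(\cat{C})}$-subobjects $\{m_i\colon P_i\to P\}_{i\in I}$ depends only on the underlying maps and their pullbacks, so conditions (1) and (3) of Theorem~\ref{ParCMJoin} for $\Sh_{\M}(\cat{C})$ are inherited directly from the corresponding facts for $(\Sh(\cat{C}),\mbb{M})$: the colimit $\bigcup_{i\in I} P_i$ of the matching diagram exists in $\Sh(\cat{C})$ and is stable under pullback. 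The only real content is condition (2) --- that the induced map $\mu = \bigvee_{i\in I} m_i\colon\bigcup_{i\in I} P_i\to P$, which is already known to be monic, in fact lies in $\M_{\Sh(\cat{C})}$.

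To prove this I would take an arbitrary $\alpha\colon\sheaf\yon D\To P$ and compute the pullback $\alpha^*\mu$. By stability of the colimit under pullback --- in the form $\alpha^*(\bigvee_{i\in I} m_i) = \bigvee_{i\in I}\alpha^*(m_i)$ used in the proof of Theorem~\ref{ParCMJoin} --- this pullback is the map induced out of the colimit of the matching diagram of the pulled-back family $\{\alpha^*(m_i)\colon\alpha^*P_i\to\sheaf\yon D\}_{i\in I}$. Because each $m_i\in\M_{\Sh(\cat{C})}$, the defining property of $\M_{\Sh(\cat{C})}$ provides maps $n_i\colon C_i\to D$ in $\M$ with $\alpha^*(m_i)\cong\sheaf\yon n_i$; moreover $\sheaf\yon$ is an $\M$-functor, so it preserves the $\M$-pullbacks $C_i\times_D C_j$ and hence sends the matching diagram of $\{n_i\}_{i\in I}$ in $\cat{C}$ to (a diagram isomorphic to) the matching diagram of $\{\alpha^*(m_i)\}_{i\in I}$. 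Since $\sheaf\yon$ is geometric by Lemma~\ref{PreserveJoinM}, it preserves colimits of matching diagrams by Proposition~\ref{GeomMFunctor}, so the colimit of the matching diagram of $\{\alpha^*(m_i)\}_{i\in I}$ is $\sheaf\yon\bigl(\bigcup_{i\in I} C_i\bigr)$ and $\alpha^*\mu = \sheaf\yon\bigl(\bigvee_{i\in I} n_i\bigr)$. As $\bigvee_{i\in I} n_i\colon\bigcup_{i\in I} C_i\to D$ lies in $\M$ by condition (2) of Theorem~\ref{ParCMJoin} applied to $(\cat{C},\M)$, the pullback of $\mu$ along $\alpha$ has the form required by the definition of $\M_{\Sh(\cat{C})}$; as $\alpha$ was arbitrary, $\mu\in\M_{\Sh(\cat{C})}$ and condition (2) holds.

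The step I expect to be the main obstacle is the bookkeeping with the two systems of monics $\mbb{M}$ and $\M_{\Sh(\cat{C})}$ on $\Sh(\cat{C})$: in particular, verifying cleanly that pulling the matching diagram of an $\M_{\Sh(\cat{C})}$-family back along a map $\alpha\colon\sheaf\yon D\To P$ yields a diagram isomorphic to $\sheaf\yon$ of a genuine matching diagram in $\cat{C}$ (using $\alpha^*P_i\times_{\sheaf\yon D}\alpha^*P_j\cong\sheaf\yon(C_i\times_D C_j)$), so that Lemma~\ref{PreserveJoinM} and Proposition~\ref{GeomMFunctor} apply. It is also worth recording at the outset that $\M_{\Sh(\cat{C})}$ is genuinely a stable system of monics on $\Sh(\cat{C})$, so that matching diagrams and Theorem~\ref{ParCMJoin} are available there; but this is immediate from $\M_{\Sh(\cat{C})} = \M_{\PSh(\cat{C})}\cap\Sh(\cat{C})$ together with the corresponding property of $\M_{\PSh(\cat{C})}$.
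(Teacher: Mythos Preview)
Your proposal is correct and follows essentially the same approach as the paper: both arguments use that $(\Sh(\cat{C}),\mbb{M})$ is already geometric to reduce the problem to checking that the induced map $\bigvee_{i\in I} m_i$ lies in $\M_{\Sh(\cat{C})}$, and both verify this by pulling back along an arbitrary $\alpha\colon\sheaf\yon D\To P$, using stability to commute the pullback with the join, and then invoking Lemma~\ref{PreserveJoinM} to identify the result as $\sheaf\yon$ of a map in $\M$. Your version is slightly more explicit about the bookkeeping (notably the identification of matching diagrams and the appeal to Proposition~\ref{GeomMFunctor}), but the structure and key ingredients are identical.
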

	
	\begin{proof}
		First note that $\M_{\Sh(\cat{C})}$ is a subset of $\mbb{M}$ (all monics in $\Sh(\cat{C})$). So to show that 
		$(\Sh(\cat{C}),\M_{\Sh(\cat{C})})$ is geometric, if suffices to prove that if $\{ \alpha_i \colon P_i \to Q\}_{i\in I}$ 
		are $\M_{\Sh(\cat{C})}$-subobjects of $Q$, then the induced monic 
		$\bigvee_{i\in I}\alpha_i \colon \bigcup_{i\in I} P_i \to Q$ is also in $\M_{\Sh(\cat{C})}$. 
		
		Denote the pullback of each $\alpha_i$ along $\gamma\colon \sheaf\yon D\to Q$ by $\sheaf\yon m_i$, with
		$m_i\in \M$:
			$$ \bfig
				\square[\sheaf\yon C_i`P_i`\sheaf\yon D`Q.;`\sheaf\yon m_i`\alpha_i`\gamma]
			\efig $$
		Since $(\Sh(\cat{C}),\mbb{M})$ is geometric, by stability, the pullback of $\bigvee_{i\in I}\alpha_i$ along
		the same $\gamma$ is $\bigvee_{i\in I} \sheaf\yon m_i$:
			$$ \bfig
				\square[\bigcup_{i\in I} \sheaf\yon C_i`\bigcup_{i\in I}P_i`\sheaf\yon D`Q.;
						`\bigvee_{i\in I}\sheaf\yon m_i`\bigvee_{i\in I}\alpha_i`\gamma]
			\efig $$
		But observe that the following diagram commutes by Lemma~\ref{PreserveJoinM}:
			$$ \bfig
				\Vtriangle[\bigcup_{i\in I} \sheaf\yon C_i`\sheaf\yon \left(\bigcup_{i\in I}C_i\right)`\sheaf\yon D.;
					\cong`\bigvee_{i\in I}\sheaf\yon m_i`\sheaf\yon\left( \bigvee_{i\in I}m_i \right)]
			\efig $$
		Hence, the pullback of $\bigvee_{i\in I}\alpha_i$ along $\gamma$ is $\sheaf\yon\left( \bigvee_{i\in I}m_i \right)$, 
		and so $\Sh_{\M}(\cat{C})$ is geometric.
	\end{proof}

	We have shown that $(\Sh(\cat{C}),\M_{\Sh(\cat{C})})$ is geometric if $(\cat{C},\M)$ is small geometric. In the
	same way that $\Sh(\cat{C})$ is a cocomplete category, we would like to show 
	$(\Sh(\cat{C}),\M_{\Sh(\cat{C})})$ is cocomplete as an $\M$-category. Recall the following definition
	of a cocomplete $\M$-category and cocontinuous $\M$-functor.
	
	\begin{Defn}[Garner-Lin]
		An $\M$-category $(\cat{C},\M)$ is \emph{cocomplete} if the underlying category $\cat{C}$ is cocomplete,
		and the inclusion $\incl \colon \cat{C}\hookrightarrow\Par(\cat{C},\M)$ preserves colimits.
		An $\M$-functor between $\M$-categories is cocontinuous if it is cocontinuous as a functor
		between the underlying categories.
	\end{Defn}
	
	Observe that if $F\colon (\cat{C},\M_{\cat{C}}) \to (\cat{D},\M_{\cat{D}})$ is a cocontinuous
	$\M$-functor between geometric $\M$-categories, then it is also geometric. 
	
	\begin{Defn}
		A cocomplete geometric $\M$-category is one which is both cocomplete and geometric. This gives the 
		$2$-category $\cat{g}\M\cat{Cocomp}$ of cocomplete geometric $\M$-categories, cocontinuous $\M$-functors
		and $\M$-cartesian natural transformations.
	\end{Defn}
	
	If $(\cat{C},\M)$ is an $\M$-category with a terminal object $1$, there is a notion of a \emph{generic $\M$-subobject}, 
	or an $\M$-subobject classifier.
	
	\begin{Defn}
		Let $(\cat{C},\M)$ be an $\M$-category with a terminal object $1$. An $\M$-subobject classifier
		consists of an object $\Sigma \in\cat{C}$ and a map $\tau\colon 1\to\Sigma$ in $\M$, such that
		for any $m\in\M$, there exists a unique map $\tilde{m}$ making the following square a pullback:
			$$ \bfig
				\square[A`1`B`\Sigma.;`m`\tau`\tilde{m}]
			\efig $$
	\end{Defn}
	
	\begin{Prop}
		Let $(\cat{C},\M)$ be a small geometric $\M$-category. Then $(\Sh(\cat{C}),\M_{\Sh(\cat{C})})$ is
		cocomplete as an $\M$-category.
	\end{Prop}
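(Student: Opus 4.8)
The plan is to show that $(\Sh(\cat{C}),\M_{\Sh(\cat{C})})$ is cocomplete by verifying the two requirements of the definition: that $\Sh(\cat{C})$ is cocomplete as a category, and that the inclusion $\incl\colon\Sh(\cat{C})\hookrightarrow\Par(\Sh(\cat{C}),\M_{\Sh(\cat{C})})$ preserves colimits. The first requirement is classical: $\Sh(\cat{C})$ is a Grothendieck topos (for the subcanonical topology $J$ built in this section), hence cocomplete, with colimits computed as the associated sheaf of the corresponding presheaf colimit. So the real content is the second requirement.

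For the second requirement, I would use the characterisation of colimit-preservation by the inclusion in terms of the $\M$-subobject structure. Recall (from \cite{GL}) that $\incl$ preserves colimits precisely when, for every colimit cocone $(P_k\to P)_{k}$ in $\Sh(\cat{C})$, the corresponding family of $\M_{\Sh(\cat{C})}$-subobjects behaves correctly — concretely, it suffices to check that $\incl$ preserves the initial object and pushouts (the latter being the delicate case, where one needs the pushout injections, when restricted appropriately, to remain in $\M_{\Sh(\cat{C})}$ and to satisfy the van Kampen-type descent condition). The key tool here is that $\M_{\Sh(\cat{C})}=\M_{\PSh(\cat{C})}\cap\Sh(\cat{C})$, together with the fact that the associated sheaf functor $\sheaf$ is a geometric $\M$-functor from $(\PSh(\cat{C}),\mbb M)$ to $(\Sh(\cat{C}),\mbb M)$ that preserves all colimits and finite limits. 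So I would transfer the question to $\PSh(\cat{C})$, where colimits are computed pointwise and the membership condition for $\M_{\PSh(\cat{C})}$ is testable against representables, and then push forward along $\sheaf$.

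Concretely, the steps are: (1) record that $\Sh(\cat{C})$ is cocomplete; (2) reduce "the inclusion preserves colimits" to "the inclusion preserves the initial object and pushouts", using the standard decomposition of colimits; (3) for the initial object, observe that $\sheaf\yon$ sends the strict initial object $0$ of $\cat{C}$ (which exists by the Remark following Theorem~\ref{ParCMJoin}) to the initial sheaf, and that $0\to A$ being in $\M$ is preserved; (4) for pushouts, take a pushout square in $\Sh(\cat{C})$ along a map in $\M_{\Sh(\cat{C})}$, lift it to $\PSh(\cat{C})$ via $\sheaf$'s preservation of colimits, use that in the presheaf topos $(\PSh(\cat{C}),\mbb M)$ the inclusion into $\Par$ preserves colimits (a generalisation of the topos case, cf.\ the Example after Theorem~\ref{ParCMJoin}), and then apply $\sheaf$ again, using that $\sheaf$ preserves both the pushout and the relevant pullbacks to conclude the descent condition holds downstairs; (5) check the membership conditions are preserved throughout via $\M_{\Sh(\cat{C})}=\M_{\PSh(\cat{C})}\cap\Sh(\cat{C})$ and the defining pullback-against-$\sheaf\yon D$ condition.

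The main obstacle I expect is step (4): ensuring that the van Kampen / stable-colimit condition for the inclusion $\incl$ genuinely descends from $\PSh(\cat{C})$ to $\Sh(\cat{C})$. The subtlety is that while $\sheaf$ preserves colimits and finite limits, one must be careful that the $\M_{\Sh(\cat{C})}$-subobjects arising as pushout injections are exactly the images under $\sheaf$ of the $\M_{\PSh(\cat{C})}$-injections, and that no new $\M_{\Sh(\cat{C})}$-subobjects appear that would spoil stability; here the identity $\M_{\Sh(\cat{C})}=\M_{\PSh(\cat{C})}\cap\Sh(\cat{C})$ and the fact that $\sheaf\yon$ detects covers (so that testing against $\sheaf\yon D$ is equivalent to testing against all of $\Sh(\cat{C})$) are what make the argument go through. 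Everything else — cocompleteness of $\Sh(\cat{C})$, behaviour of the initial object — is routine given the machinery already assembled in this section.
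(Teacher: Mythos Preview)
Your approach and the paper's are genuinely different. The paper does not attempt to verify colimit-preservation of $\incl$ directly. Instead, it proves that the $\M$-subobject classifier $\Sigma$ of $\PSh_\M(\cat{C})$ (given by $\Sigma(C)=\Sub_\M(C)$, known from \cite{GL}) is in fact a sheaf for the topology $J$. The bulk of the argument is a hands-on verification that $\Sigma$ is separated and that every matching family for a basic cover has an amalgamation (namely $\bigvee_i a_i m_i$), using the Heyting algebra structure of $\Sub_\M(C)$. Once $\Sigma$ is a sheaf, and since $\M_{\Sh(\cat{C})}=\M_{\PSh(\cat{C})}\cap\Sh(\cat{C})$, the map $\tau\colon 1\to\Sigma$ is an $\M$-subobject classifier in $\Sh_\M(\cat{C})$; local cartesian closedness of $\Sh(\cat{C})$ then yields partial map classifiers, so by \cite{CL2} the inclusion $\incl$ has a right adjoint and in particular preserves all colimits.

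Your direct route, by contrast, has a real gap at step~(4). You invoke a ``van Kampen-type descent'' characterisation of when $\incl$ preserves pushouts, but no such characterisation is established in the paper or in \cite{GL}; even the presheaf case there is handled via the $\M$-subobject classifier, not by a direct stability argument. More seriously, the transfer you propose does not go through formally: knowing that $\sheaf$ preserves colimits and finite limits tells you about diagrams in $\Sh(\cat{C})$, but says nothing automatic about colimits in $\Par(\Sh_\M(\cat{C}))$. You would need something like ``if a cocone is colimiting in $\Par(\PSh_\M(\cat{C}))$ then its image under $\sheaf$ is colimiting in $\Par(\Sh_\M(\cat{C}))$'', and this is not a consequence of $\sheaf$ being a geometric $\M$-functor, since $\Par$ is not itself colimit-preserving in any obvious sense. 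The paper's detour through the partial map classifier (equivalently, a right adjoint to $\incl$) is precisely what sidesteps this difficulty.
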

	
	\begin{proof}
		Recall that as $\Sh(\cat{C})$ is locally cartesian closed, if $\Sh_{\M}(\cat{C})$ were to have an 
		$\M$-subobject classifier $\Sigma$, then the pullback functor 
		$\tau^*\colon \Sh(\cat{C})/\Sigma \to \Sh(\cat{C})$ will have a right adjoint. This will imply
		that $\Sh(\cat{C})$ has a partial map classifier for every $P \in \Sh(\cat{C})$ \cite{JOHNSTONE}, and in turn, 
		will mean that the inclusion $\Sh(\cat{C})\hookrightarrow\Sh_{\M}(\cat{C})$ has a right 
		adjoint \cite{CL2}. Therefore, it suffices to show that $\Sh_{\M}(\cat{C})$ has an
		$\M$-subobject classifier.
		
		From \cite[Example 3.7]{GL}, $\PSh_{\M}(\cat{C})$ has an $\M$-subobject classifier
		$\Sigma$ taking objects $C\in\cat{C}$ to $\Sub_{\M}(C)$, and morphisms $f$ to $f^*$
		(by pullback along $f$). There was also a map $\tau\colon 1\To\Sigma$ in $\M_{\PSh(\cat{C})}$. 
		We will show that this $\Sigma$ is a sheaf, and then because $\M_{\Sh(\cat{C})} = \M_{\PSh(\cat{C})} \cap \Sh(\cat{C})$,
		it follows that $\tau\colon 1\To\Sigma$ is an $\M$-subobject classifier in $\Sh_{\M}(\cat{C})$.
		
		We begin by showing that $\Sigma$ is a separated presheaf. Let $R = \{a_i\colon A_i \to C\}_{i\in I}$ be a 
		basic cover of $C\in\cat{C}$, and let $M = \{m_i \colon B_i \to A_i\}_{i\in I}$ be 
		a matching family for $R$. Now suppose $x,y \in\Sub_{\M}(C)$ are two amalgamations for $M$. Then 
		pulling either $x$ or $y$ back along $a_i$ gives $m_i$ (for all $i\in I$). That is,
		$ a_i^*(x) = a_i^*(y)$, and so post-composing both sides by $a_i$ yields
		$ a_i \wedge x = a_i \wedge y$.
		
		The families $\{a_i \wedge x\}_{i\in I}$ and $\{a_i \wedge y\}_{i\in I}$ are families of monics, so we 
		may take joins over all $i\in I$, giving 
		$ \bigvee_{i\in I} (a_i \wedge x) = \bigvee_{i\in I} (a_i \wedge y) $. However, since $\Sub_{\M}(C)$ is 
		a Heyting algebra, we get
			$$ x \wedge \bigvee_{i\in I} a_i = y \wedge \bigvee_{i\in I} a_i $$
		by distributivity, and so $x=y$ as $\bigvee_{i\in I} a_i = 1$ by definition. Therefore, $\Sigma$ is a separated
		presheaf. It remains to show that any matching family for $R$ has an amalgamation.
		
		Again, let $R$ be a basic cover of $C$ and $M$ a matching family for $R$ as above, and consider the following
		diagram:
			$$ \bfig
				\iiixiii/`>``>`>`>`>`>`>`>`>`>/[`A_jB_i`B_i`B_jA_i`A_iA_j`A_i`B_j`A_j`C.;
					`b_i``n_i`m_i`n_j`a_{ji}`b_j`a_{ij}`a_i`m_j`a_j]
				\efig $$
		The above squares are all pullback squares. To say that $m_i$ and $m_j$ belong to the same matching family is to say that 
		$n_i$ and $n_j$ represent the same $\M$-subobject, or equivalently $ a_{ji}^*(m_i) = a_{ij}^*(m_j)$ for
		all $i\in I$. Since $a_j a_{ij} = a_i a_{ji}$ by construction, post-composing both sides
		of the above equation by $a_j a_{ij} = a_i a_{ji}$ gives $ (a_i a_{ji}) a_{ji}^*(m_i) = (a_j a_{ij}) a_{ij}^*(m_j)$,
		or $a_i \circ (a_{ji} \wedge m_i) = a_j \circ (a_{ij} \wedge m_j)$ (using the fact 
		$a_{ij} \circ a_{ij}^*(m_j) = a_{ij} \wedge m_j$). But we can also rewrite the above equation as 
		$ a_im_ib_i = a_jm_jb_j $, or
			\begin{equation} \label{matchfam} a_j \wedge a_im_i = a_i \wedge a_j m_j \end{equation}
		by writing compositions as intersections (as all squares are pullbacks). So if $m_i, m_j$ come from a matching 
		family for $R$, then they must satisfy \eqref{matchfam}.
		
		We claim that the (unique) amalgamation for $M$ is $\bigvee_{i\in I} a_i m_i$. In other words, we need to show 
		that $a_j^*\left( \bigvee_{i\in I} a_im_i\right) = m_j$ for all $j\in I$. However, since $a_j$ is monic for all 
		$j\in I$, the previous equality holds if and only if 
		$a_j \circ a_j^*\left( \bigvee_{i\in I} a_im_i\right) = a_j \circ m_j$, which is true if and only if
			$$ a_j \wedge \left( \bigvee_{i\in I} a_im_i\right) = a_j \circ m_j, \quad \forall j\in I. $$
		Examining the left hand side, we have (by the distributive law),
			\begin{align*}
				a_j \wedge \left( \bigvee_{i\in I} a_im_i\right) &= \bigvee_{i\in I} (a_j \wedge a_im_i) 
					= (a_j \wedge a_jm_j) \vee \left(\bigvee_{i\ne j} a_j \wedge a_im_i \right) \\
					&= a_jm_j \vee \left(\bigvee_{i\ne j} a_i \wedge a_jm_j \right)
			\end{align*}
		using \eqref{matchfam} and the fact $a_j m_j \le a_j$ (as an $\M$-subobject). But for all $i\ne j$, 
		$a_i \wedge a_jm_j \le a_jm_j$ 
		(since there is an arrow $b_j$ from the domain of $a_i \wedge a_jm_j$ to $a_jm_j$). Therefore, this means 
		that $\left(\bigvee_{i\ne j} a_i \wedge a_jm_j \right) \le a_jm_j$. Hence,
			$$ a_j \wedge \left( \bigvee_{i\in I} a_im_i\right) = a_j m_j $$
		as required.
		
		So every matching family for $R$ has an amalgamation, implying that $\Sigma$ is indeed a sheaf. Therefore,
		when $(\cat{C},\M)$ is a geometric $\M$-category, $(\Sh(\cat{C}),\M_{\Sh(\cat{C})})$ is cocomplete as an 
		$\M$-category.
	\end{proof}

	We have established that if $(\cat{C},\M)$ is a geometric $\M$-category, then $(\Sh(\cat{C}),\M_{\Sh(\cat{C})})$ 
	is a geometric and cocomplete $\M$-category. We now show that $(\Sh(\cat{C}),\M_{\Sh(\cat{C})})$ is the
	free geometric cocompletion of any small geometric $\M$-category $(\cat{C},\M)$. The following lemmas will be useful.
	
	
	\begin{Lemma}[Kelly, Theorem 5.56] \label{FreeCocompLemma1}
		Let $F\colon \cat{C}\to \cat{D}$ be a functor, where $\cat{C}$ is a small site and $\cat{D}$ is cocomplete. Denote the left Kan 
		extension of $F$ along $\yon\colon\cat{C}\to\PSh(\cat{C})$ by $\tilde{F}\colon \PSh(\cat{C})\to\cat{D}$. Suppose the right
		adjoint to $\tilde{F}$ factors through the inclusion $\incl\colon\Sh(\cat{C})\to\PSh(\cat{C})$. Denote the category 
		of such functors $F\colon\cat{C}\to\cat{D}$ by $\Cat_i(\cat{C},\cat{D})$. Then the following is an equivalence of categories:
			$$ (-) \circ \sheaf\yon \colon \cat{Cocomp}(\Sh(\cat{C}),\cat{D}) \to \Cat_i(\cat{C},\cat{D}), $$
		with pseudo-inverse given by left Kan extension along $\sheaf\yon$.
	\end{Lemma}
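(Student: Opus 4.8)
The plan is to transfer the classical universal property of presheaf categories across the reflective localization presenting $\Sh(\cat{C})$ inside $\PSh(\cat{C})$. For a functor $F\colon\cat{C}\to\cat{D}$ write $\wt{F}=\Lan_{\yon}F\colon\PSh(\cat{C})\to\cat{D}$; since $\cat{C}$ is small and $\cat{D}$ is cocomplete and locally small, this left Kan extension exists, is cocontinuous, and has right adjoint the nerve $N_F\colon\cat{D}\to\PSh(\cat{C})$, $N_F D=\cat{D}(F(-),D)$. The ``presheaf'' case of the statement is classical: $(-)\circ\yon\colon\cat{Cocomp}(\PSh(\cat{C}),\cat{D})\to\Cat(\cat{C},\cat{D})$ is an equivalence with pseudo-inverse $\wt{(-)}$, expressing that $\PSh(\cat{C})$ is the free cocompletion of $\cat{C}$. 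Unwinding the hypothesis, $\Cat_i(\cat{C},\cat{D})$ is exactly the full subcategory of $\Cat(\cat{C},\cat{D})$ on those $F$ for which every $N_F D$ is a sheaf, equivalently $N_F=\incl\circ N'_F$ for some $N'_F\colon\cat{D}\to\Sh(\cat{C})$.

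First I would pin down the candidate pseudo-inverse. For the adjunction $\sheaf\dashv\incl$ one has $\Lan_{\sheaf}H\cong H\circ\incl$ for every $H\colon\PSh(\cat{C})\to\cat{D}$ (a short computation with the unit of $\sheaf\dashv\incl$ and the isomorphism $\sheaf\incl\cong\mathrm{id}$), so by the composition law for left Kan extensions $\Lan_{\sheaf\yon}F\cong\Lan_{\sheaf}\wt{F}\cong\wt{F}\circ\incl=:\ov{F}$. If $F\in\Cat_i(\cat{C},\cat{D})$, the natural bijections $\cat{D}(\ov{F}P,D)\cong\PSh(\cat{C})(\incl P,N_F D)\cong\Sh(\cat{C})(P,N'_F D)$ --- the second by full faithfulness of $\incl$ together with $N_F=\incl N'_F$ --- exhibit $\ov{F}\dashv N'_F$, so $\ov{F}$ is cocontinuous; thus $F\mapsto\ov{F}=\Lan_{\sheaf\yon}F$ is a well-defined functor $\Cat_i(\cat{C},\cat{D})\to\cat{Cocomp}(\Sh(\cat{C}),\cat{D})$. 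Conversely, for cocontinuous $G\colon\Sh(\cat{C})\to\cat{D}$ the composite $G\sheaf\colon\PSh(\cat{C})\to\cat{D}$ is cocontinuous and restricts along $\yon$ to $G\sheaf\yon$, so $\Lan_{\yon}(G\sheaf\yon)\cong G\sheaf$ by the presheaf case; its right adjoint $N_{G\sheaf\yon}$ is then the right adjoint of $G\sheaf$, which factors through $\incl$ since $\sheaf$ has right adjoint $\incl$ and $G$, being cocontinuous out of the Grothendieck topos $\Sh(\cat{C})$, has a right adjoint. Hence $G\sheaf\yon\in\Cat_i(\cat{C},\cat{D})$, so $(-)\circ\sheaf\yon$ lands where claimed.

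It remains to produce the natural isomorphisms of the equivalence. In one direction, $\ov{G\sheaf\yon}=\Lan_{\yon}(G\sheaf\yon)\circ\incl\cong G\sheaf\circ\incl\cong G$, the last step because the counit $\sheaf\incl\To\mathrm{id}$ is invertible. In the other, for $F\in\Cat_i(\cat{C},\cat{D})$ one needs $\ov{F}\circ\sheaf\yon\cong F$; since $\yon$ is fully faithful $\wt{F}\circ\yon\cong F$, so it suffices that $\wt{F}$ carries the component $\eta_{\yon C}\colon\yon C\to\incl\sheaf\yon C$ of the sheafification unit to an isomorphism, naturally in $C$. More generally $\wt{F}$ inverts every map $f\colon P\to Q$ of $\PSh(\cat{C})$ inverted by $\sheaf$: since $N_F D$ is a sheaf, precomposition with $f$ is a bijection $\PSh(\cat{C})(Q,N_F D)\to\PSh(\cat{C})(P,N_F D)$ --- across $\sheaf\dashv\incl$ it corresponds to precomposition with the isomorphism $\sheaf f$ --- hence, by $\wt{F}\dashv N_F$, precomposition with $\wt{F}f$ is a bijection $\cat{D}(\wt{F}Q,D)\to\cat{D}(\wt{F}P,D)$ for every $D$, and $\wt{F}f$ is invertible by the Yoneda lemma; and $\eta_{\yon C}$ is inverted by $\sheaf$ by a triangle identity. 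Naturality of all these isomorphisms in $F$ and in $G$ --- immediate from functoriality of $\Lan$ and of precomposition --- upgrades the object-level correspondence to the asserted equivalence with pseudo-inverse $\Lan_{\sheaf\yon}(-)$.

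The step I expect to be the real obstacle is precisely this last identification, that for $F\in\Cat_i(\cat{C},\cat{D})$ the functor $\wt{F}$ inverts exactly the maps inverted by $\sheaf$: this is the only point at which the Grothendieck topology on $\cat{C}$ enters, everything else being formal manipulation of adjunctions and Kan extensions. A clean way to isolate it is to prove first, independently of $\cat{D}$, the equivalence ``$N_F D$ is a sheaf for every $D$'' $\iff$ ``$\wt{F}$ sends every covering-sieve inclusion $S\hookrightarrow\yon C$ to an isomorphism'', using that $\wt{F}$ carries such an inclusion to the comparison out of the colimit of representables indexed by the elements of $S$; the general $\sheaf$-inverted case then follows, since these inclusions generate the same class of local isomorphisms.
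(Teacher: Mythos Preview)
The paper does not prove this lemma; it is stated with attribution to Kelly (Theorem~5.56 of \emph{Basic Concepts of Enriched Category Theory}) and used as a black box. Your argument is correct and is essentially the standard one: transfer the free-cocompletion universal property of $\PSh(\cat{C})$ across the reflective localization $\sheaf\dashv\incl$, using that $\Lan_{\sheaf\yon}F\cong\wt{F}\incl$ and that $\wt{F}$ inverts every $\sheaf$-local isomorphism precisely when each $N_FD$ is a sheaf.

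One small remark: in showing that $G\sheaf\yon\in\Cat_i(\cat{C},\cat{D})$ you invoke the adjoint functor theorem to produce a right adjoint to $G$. This is fine for a Grothendieck topos and locally small $\cat{D}$, but you can avoid it entirely: the right adjoint to $\wt{G\sheaf\yon}\cong G\sheaf$ is always the nerve $D\mapsto\cat{D}(G\sheaf\yon(-),D)$, and this is a sheaf because $\sheaf$ already inverts every covering-sieve inclusion, so $G\sheaf$ does too --- exactly the characterization you isolate in your final paragraph. That characterization, incidentally, is precisely what the paper records separately as Lemma~\ref{FreeCocompLemma2}, so your instinct about where the content lies matches the paper's organization.
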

	
	\begin{Lemma}\label{FreeCocompLemma2}
		Let $F\colon \cat{C}\to \cat{D}$ and $\tilde{F}\colon \PSh(\cat{C})\to\cat{D}$ be functors as above, and denote the right 
		adjoint to $\tilde{F}$ by $G\colon\cat{D}\to\PSh(\cat{C})$. Then for each $D\in\cat{D}$, $G(D)$ is a sheaf if and only if
		for all $C\in\cat{C}$, $\tilde{F}$ takes covering sieves $S \rightarrowtail \yon C$ in $\PSh(\cat{C})$ to isomorphisms
		in $\cat{D}$.
	\end{Lemma}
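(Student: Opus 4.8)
The plan is to transport the sheaf condition on $G(D)$ across the adjunction $\tilde{F}\dashv G$ and then recognise the outcome via the Yoneda lemma in $\cat{D}$; the biconditional is understood with the quantifier over $D$ governing the whole statement, i.e.\ $G(D)$ is a sheaf for \emph{every} $D\in\cat{D}$ if and only if $\tilde{F}$ carries every covering sieve inclusion $S\rightarrowtail\yon C$ to an isomorphism. First I would recall the sieve-theoretic reformulation of the sheaf axiom: a presheaf $P$ on the site $\cat{C}$ is a sheaf precisely when, for each $C\in\cat{C}$ and each covering sieve $S\rightarrowtail\yon C$, restriction along this inclusion is a bijection $\PSh(\cat{C})(\yon C,P)\to\PSh(\cat{C})(S,P)$. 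This is just the translation of the matching-family/amalgamation condition of \cite[p.~123]{MM} via the Yoneda lemma, since maps $\yon C\to P$ are exactly elements of $P(C)$ and maps $S\to P$ are exactly matching families for $S$.

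Applying this with $P=G(D)$, the task reduces to understanding the restriction map $\PSh(\cat{C})(\yon C,G(D))\to\PSh(\cat{C})(S,G(D))$. Using the natural isomorphism $\PSh(\cat{C})(Q,G(D))\cong\cat{D}(\tilde{F}Q,D)$ afforded by the adjunction, together with the identification $\tilde{F}\yon C\cong FC$ coming from full faithfulness of $\yon$, this restriction map is identified with the precomposition map $\cat{D}(\tilde{F}\yon C,D)\to\cat{D}(\tilde{F}S,D)$ induced by $\tilde{F}(S\rightarrowtail\yon C)$. Hence $G(D)$ is a sheaf for every $D$ if and only if, for every $C$ and every covering sieve $S\rightarrowtail\yon C$, precomposition with $\tilde{F}(S\rightarrowtail\yon C)$ is a bijection $\cat{D}(\tilde{F}\yon C,D)\to\cat{D}(\tilde{F}S,D)$ for all $D\in\cat{D}$.

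To finish I would invoke the Yoneda lemma in $\cat{D}$: a morphism $g$ of $\cat{D}$ is invertible if and only if $\cat{D}(g,D)$ is a bijection for every $D\in\cat{D}$. For the ``if'' direction, the assumption that each $\tilde{F}(S\rightarrowtail\yon C)$ is an isomorphism makes all the precomposition maps above bijective, so every $G(D)$ is a sheaf; for the ``only if'' direction, the assumption that every $G(D)$ is a sheaf makes those hom-set maps bijective for every $D$, whence $\tilde{F}(S\rightarrowtail\yon C)$ is an isomorphism. The only delicate point is the naturality bookkeeping in the middle step --- checking that the adjunction isomorphism is natural enough in $Q$ that the inclusion $S\rightarrowtail\yon C$ is transported to precisely $\tilde{F}$ of that inclusion --- but this is a routine chase of the unit and counit, so I do not anticipate a genuine obstacle.
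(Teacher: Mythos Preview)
Your proposal is correct and follows essentially the same line as the paper: reformulate the sheaf condition as invertibility of $\PSh(\cat{C})(i,G(D))$ for each covering-sieve inclusion $i\colon S\rightarrowtail\yon C$, transpose across $\tilde{F}\dashv G$ to $\cat{D}(\tilde{F}i,D)$, and then use Yoneda in $\cat{D}$ to conclude that this holds for all $D$ precisely when $\tilde{F}i$ is invertible. Your explicit remark that the quantifier over $D$ should govern the whole biconditional is a useful clarification that the paper leaves implicit.
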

	
	\begin{proof}
		By definition, $S$ is a covering sieve if and only if for all $D\in\cat{D}$ and 
		$S \to G(D)$, there exists a unique extension $ \yon C \to G(D)$ making
		the following diagram commute:
			$$ \bfig
				\ptriangle/>` >->`<--/[S`G(D)`\yon C;
					`i`\exists!]
			\efig $$
		In other words, if and only if there is an isomorphism
			$$ \PSh(\cat{C})(i, G(D)) \colon
				\PSh(\cat{C})( S, G(D) ) \to \PSh(\cat{C})(\yon C,G(D)). $$
		As $\tilde{F} \dashv G$, the above is an isomorphism if and only if $\cat{D}(\tilde{F}i, D)$ is invertible for all $D\in\cat{D}$, 
		and this in turn is true if and only if $\tilde{F}i$ is invertible.
	\end{proof}
	
	\begin{Lemma}[Garner-Lin]
		If $F\colon (\cat{C},\M_{\cat{C}}) \to (\cat{D},\M_{\cat{D}})$ is an $\M$-functor with $(\cat{D},\M_{\cat{D}})$
		cocomplete, then $\tilde{F} = \Lan_{\yon} F$ is an $\M$-functor.
	\end{Lemma}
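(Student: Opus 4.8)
The plan is to verify directly the two defining properties of an $\M$-functor for $\wt{F} = \Lan_\yon F \colon \PSh(\cat{C}) \to \cat{D}$: first, that $\wt{F}$ carries $\M_{\PSh(\cat{C})}$ into $\M_{\cat{D}}$, and second, that it preserves $\M$-pullbacks. Two standard facts are the starting point. Since $\wt{F}$ is a left Kan extension along $\yon$, it is cocontinuous and restricts to $F$ along $\yon$. And every presheaf $Q$ is canonically the colimit $\colim(\yon\pi_Q)$ of representables indexed by its category of elements, with projection $\pi_Q\colon \int Q\to\cat{C}$; moreover this colimit is \emph{universal} (stable under pullback) because $\PSh(\cat{C})$ is a topos.

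For the first property, I would take $\mu\colon P\To Q$ in $\M_{\PSh(\cat{C})}$ and pull it back along each classifying map $x\colon\yon B\to Q$, that is, along each object $(B,x)$ of $\int Q$. By the very definition of $\M_{\PSh(\cat{C})}$, each such pullback is, up to canonical isomorphism, $\yon m_{(B,x)}$ for a unique $\M_{\cat{C}}$-subobject $m_{(B,x)}\colon A_{(B,x)}\to B$; these assemble, functorially in $(B,x)$, into a diagram on $\int Q$ together with a natural transformation to $\yon\pi_Q$ whose naturality squares are pullbacks, and by universality of colimits in $\PSh(\cat{C})$ its colimit is $P$ with comparison map $\mu$. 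Applying $\wt{F}$ and using cocontinuity together with $\wt{F}\yon\cong F$ presents $\wt{F}\mu$ as the map induced on colimits by the family $\{F m_{(B,x)}\}$, each member of which lies in $\M_{\cat{D}}$ and whose connecting squares remain pullbacks, since $F$ is an $\M$-functor. I would then conclude $\wt{F}\mu\in\M_{\cat{D}}$ by invoking the principle that in a cocomplete $\M$-category a colimit of a cartesian diagram of $\M$-maps is again an $\M$-map, the colimiting squares being pullbacks; this is precisely where cocompleteness of $(\cat{D},\M_{\cat{D}})$ is used, through the fact that $\incl\colon\cat{D}\hookrightarrow\Par(\cat{D},\M_{\cat{D}})$ preserves colimits.

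For the second property, I would start from an $\M$-pullback square in $\PSh(\cat{C})$ with $\mu\in\M_{\PSh(\cat{C})}$ on one side, $\alpha\colon R\To Q$ on the other, and pullback $P'=P\times_Q R$; note $P'\to R$ is again in $\M_{\PSh(\cat{C})}$, since $\M_{\PSh(\cat{C})}$ is a stable system of monics. Running the argument above over $\int R$ with the composites $\yon B\to R\xrightarrow{\alpha}Q$ realises $P'$ as the universal colimit of a cartesian diagram of representables over $\yon\pi_R$, so that $\wt{F} P'\cong\colim F A'_{(B,x)}$; and on representables the squares involved are $\M$-pullbacks in $\cat{C}$ (Yoneda is full and faithful and $\M_{\cat{C}}$ is a stable system of monics), hence are carried by $F$ to $\M$-pullbacks in $\cat{D}$. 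Since $\wt{F}\mu$ and $\wt{F}(P'\to R)$ already lie in $\M_{\cat{D}}$ by the first part, the candidate square $\wt{F} P'\to\wt{F} P$ over $\wt{F} R\to\wt{F} Q$ differs from the genuine $\M$-pullback of $\wt{F}\mu$ along $\wt{F}\alpha$ only through the canonical comparison map, and I would check that this comparison is invertible using the colimit descriptions together with the same colimit-stability properties of $\M$-maps.

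The step I expect to be the main obstacle is exactly this transfer across $\wt{F}$: on the presheaf side everything is clean — universality of colimits in a topos, plus $F$ being an $\M$-functor on $\cat{C}$ — but converting ``cartesian diagram of $\M$-maps'' data through the cocontinuous functor $\wt{F}$ into honest $\M$-subobjects and $\M$-pullbacks in $\cat{D}$ relies on the interaction between $\M_{\cat{D}}$ and colimits that is the real content of $(\cat{D},\M_{\cat{D}})$ being a cocomplete $\M$-category, and which must be isolated as a separate lemma (or drawn from \cite{GL}).
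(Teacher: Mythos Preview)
The paper does not prove this lemma: it is stated with attribution to \cite{GL} and no proof is given, so there is no in-paper argument to compare against. Your outline is a reasonable reconstruction of how the Garner--Lin argument runs---expressing $P\to Q$ as a colimit over $\int Q$ of $\M_{\cat C}$-maps between representables, pushing through $\wt F$ by cocontinuity, and then invoking a lemma on colimits of cartesian $\M$-transformations---and you correctly flag that last step as the crux.

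That said, your sketch is not yet a proof, and you say as much. The principle you call on, that in a cocomplete $\M$-category the colimit of a cartesian natural transformation with $\M$-components is again an $\M$-map with pullback coprojection squares, is exactly the nontrivial content and does not follow from the bare statement ``$\incl$ preserves colimits'' without further work. The standard way to close this gap is to pass to $\Par(\cat D,\M_{\cat D})$: a cartesian $\M$-transformation $\alpha\colon D'\Rightarrow D$ has a pointwise partial inverse $\alpha^*$ which is genuinely natural in $\Par(\cat D,\M_{\cat D})$ (cartesianness is what makes the naturality squares commute there), and since $\incl$ preserves colimits one obtains a map $\colim D\to\colim D'$ in $\Par(\cat D,\M_{\cat D})$ which one then identifies as the partial inverse of $\colim\alpha$. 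Until you supply this (or an equivalent) argument, both halves of your proposal remain conditional on a lemma you have only named. Your treatment of $\M$-pullback preservation has the same status: the ``canonical comparison is invertible'' step again rests on the colimit-stability fact you have deferred.
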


	\begin{Lemma}\label{FreeCocompLemma3}
		Let $(\cat{C},\M_{\cat{C}})$ and $(\cat{D},\M_{\cat{D}})$ be geometric $\M$-categories, with $(\cat{C},\M_{\cat{C}})$
		small and $(\cat{D},\M_{\cat{D}})$ cocomplete. Let $F\colon(\cat{C},\M_{\cat{C}})\to(\cat{D},\M_{\cat{D}})$ be an $\M$-functor. 
		Then $F$ preserves unions of $\M$-subobjects if and only if for all $C\in\cat{C}$, $\tilde{F}$ takes covering sieves 
		$S \rightarrowtail \yon C$ in $\PSh(\cat{C})$ to isomorphisms in $\cat{D}$.
	\end{Lemma}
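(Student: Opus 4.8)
The plan is to exploit the correspondence between basic covers of an object $C \in \cat{C}$ and covering sieves $S \rightarrowtail \yon C$ in $\PSh(\cat{C})$, together with the explicit description of such sieves from Lemma~\ref{PreserveJoinM}, namely that the covering sieve associated to a basic cover $\{a_i \colon C_i \to C\}_{i\in I}$ is (up to isomorphism) the subobject $\bigcup_{i\in I} \yon C_i \rightarrowtail \yon C$ arising from the matching diagram of $\{a_i\}$. The key observation is that $\tilde F = \Lan_{\yon} F$ is cocontinuous (it is a left adjoint), so it commutes with the colimits computing these unions in $\PSh(\cat{C})$, and that $\tilde F \circ \yon \cong F$. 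Combining these, $\tilde F$ applied to the covering sieve $\bigcup_{i\in I} \yon C_i \rightarrowtail \yon C$ is, up to the isomorphisms $\tilde F(\yon C_i) \cong F C_i$ and $\tilde F(\yon C) \cong F C$, precisely the canonical map $\bigcup_{i\in I} F C_i \to F C$ induced by the matching diagram of $\{F a_i\}$; that is, the map $\bigvee_{i\in I} F a_i$ in the sense of Theorem~\ref{ParCMJoin} applied to $(\cat{D},\M_{\cat{D}})$.

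With this identification in hand, the proof splits into the two implications. For the "only if" direction, suppose $F$ preserves unions of $\M$-subobjects. Fix $C \in \cat{C}$ and a covering sieve $S \rightarrowtail \yon C$; by Proposition~\ref{Basis} it corresponds to a basic cover $\{a_i \colon C_i \to C\}_{i\in I}$ with $\bigvee_{i\in I} a_i = 1_C$ in $\Sub_{\M}(C)$. Since $F$ preserves unions, $\bigvee_{i\in I} F a_i = F(\bigvee_{i\in I} a_i) = F(1_C) = 1_{FC}$ in $\Sub_{\M_{\cat{D}}}(FC)$. But $\tilde F S$ is, by the identification above, the map $\bigvee_{i\in I} F a_i \colon \bigcup_{i\in I} F C_i \to FC$, and an $\M$-subobject that equals the top element $1_{FC}$ is an isomorphism. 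Hence $\tilde F S$ is an isomorphism. For the "if" direction, suppose $\tilde F$ sends every covering sieve to an isomorphism. Given a family of $\M$-subobjects $\{m_i \colon A_i \to A\}_{i\in I}$ in $\cat{C}$, form the union $\bigcup_{i\in I} A_i$ with the induced map $\mu = \bigvee_{i\in I} m_i \colon \bigcup_{i\in I} A_i \to A$ (this exists by Theorem~\ref{ParCMJoin} since $\cat{C}$ is geometric), and observe that $\{A_i \to \bigcup_{i\in I} A_i\}_{i\in I}$ is a basic cover of $\bigcup_{i\in I} A_i$, with associated covering sieve $\bigcup_{i\in I}\yon A_i \rightarrowtail \yon(\bigcup_{i\in I} A_i)$. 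Applying $\tilde F$ and using cocontinuity plus $\tilde F \yon \cong F$, the hypothesis says exactly that the induced map $\bigcup_{i\in I} F A_i \to F(\bigcup_{i\in I} A_i)$ is an isomorphism, i.e. $\bigvee_{i\in I}(F m_i' ) $ along the restriction to $\bigcup A_i$ is the top; composing with $F\mu$ and using that $F$, being an $\M$-functor, preserves the $\M$-pullbacks defining meets, one concludes $F(\bigcup_{i\in I} A_i) = \bigcup_{i\in I} F A_i$ as $\M$-subobjects of $FA$, which is the assertion that $F$ preserves this union.

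The main obstacle I anticipate is bookkeeping rather than conceptual: one must check carefully that the colimit-of-matching-diagram description of a covering sieve in $\PSh(\cat{C})$ is genuinely preserved by $\tilde F$ in the precise form needed — that is, that the canonical comparison map $\colim_i \tilde F(\yon A_i) \to \tilde F(\colim_i \yon A_i)$ (an isomorphism since $\tilde F$ is cocontinuous) is compatible, under the counit-type isomorphisms $\tilde F \yon \cong F$, with the comparison map $\colim_i F A_i \to F(\colim_i A_i)$ in $\cat{D}$, and that the latter colimit in $\cat{D}$ really computes the union $\bigcup_i F A_i$ of the $\M$-subobjects $F m_i$ via Theorem~\ref{ParCMJoin} (applied in $\cat{D}$). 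This requires knowing that $F$ sends the matching diagram of $\{m_i\}$ to the matching diagram of $\{F m_i\}$, which follows because $F$ is an $\M$-functor and hence preserves the $\M$-pullbacks $A_i A_j = A_i \times_A A_j$ that constitute the matching diagram. Once these compatibilities are pinned down, both implications are immediate from Theorem~\ref{ParCMJoin} and the fact that an $\M$-subobject is an isomorphism precisely when it is the top element of its subobject lattice.
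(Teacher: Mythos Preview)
Your proposal is correct and follows essentially the same approach as the paper: identify each covering sieve with the subfunctor $\bigcup_{i\in I}\yon C_i \rightarrowtail \yon\bigl(\bigcup_{i\in I} C_i\bigr)$ arising from a basic cover, then use cocontinuity of $\tilde F$ together with $\tilde F\yon \cong F$ (and the fact that $F$, as an $\M$-functor, sends matching diagrams to matching diagrams) to identify $\tilde F$ applied to this sieve with the comparison map $\bigcup_{i\in I} FC_i \to F\bigl(\bigcup_{i\in I} C_i\bigr)$. The paper's proof compresses both implications into the single observation that this comparison map is an isomorphism if and only if $F$ preserves the union, whereas you spell out the two directions separately and make the compatibility checks explicit; but the underlying argument is the same.
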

	
	\begin{proof}
		By the previous lemma, $\tilde{F}$ is a cocontinuous $\M$-functor. This means that for any covering sieve
		$S\to\yon C$, and in particular, the covering sieve $\bigcup_{i\in I} \yon C_i \to \yon \left(\bigcup_{i\in I} C_i\right)$
		from \eqref{CoveringSieve}, we have
			$$ \tilde{F}\left( \bigcup_{i\in I} \yon C_i \right) \cong \bigcup_{i\in I} \tilde{F} \yon C_i \cong \bigcup_{i\in I} FC_i, $$
		and so $\tilde{F}\mu \colon \bigcup_{i\in I} FC_i \to F\left( \bigcup_{i\in I} C_i \right)$ is an isomorphism if and only if
		$F$ preserves unions of $\M$-subobjects.
	\end{proof}
	
	\begin{Thm}\label{FreeMCocomp}
		Let $(\cat{C},\M_{\cat{C}})$ and $(\cat{D},\M_{\cat{D}})$ be geometric $\M$-categories, with $(\cat{D},\M_{\cat{D}})$
		cocomplete. Then the following is an equivalence of categories:
			$$ (-) \circ \sheaf\yon\colon \cat{g}\M\cat{Cocomp}(\Sh_{\M}(\cat{C}), (\cat{D},\M_{\cat{D}})) \to 
				\cat{g}\M\Cat((\cat{C},\M_{\cat{C}}),(\cat{D},\M_{\cat{D}})) $$
	\end{Thm}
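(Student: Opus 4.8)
The plan is to derive this from Kelly's cocompletion theorem in the form of Lemma~\ref{FreeCocompLemma1}, by showing that the equivalence
$$ (-) \circ \sheaf\yon \colon \cat{Cocomp}(\Sh(\cat{C}),\cat{D}) \to \Cat_i(\cat{C},\cat{D}) $$
restricts to the two hom-categories in the statement. The easy half is that $(-)\circ\sheaf\yon$ sends a cocontinuous geometric $\M$-functor to a geometric $\M$-functor. The functor $\sheaf\yon\colon(\cat{C},\M_{\cat{C}})\to\Sh_{\M}(\cat{C})$ is itself a geometric $\M$-functor: it is an $\M$-functor by the construction of $\Sh_{\M}(\cat{C})$, and it preserves colimits of matching diagrams by Lemma~\ref{PreserveJoinM}, hence is geometric by Proposition~\ref{GeomMFunctor}. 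Since geometric $\M$-functors are closed under composition and a cocontinuous $\M$-functor between geometric $\M$-categories is automatically geometric, precomposition with $\sheaf\yon$ carries each $H\in\cat{g}\M\cat{Cocomp}(\Sh_{\M}(\cat{C}),(\cat{D},\M_{\cat{D}}))$ to a geometric $\M$-functor $H\sheaf\yon$, and it clearly carries $\M$-cartesian transformations to $\M$-cartesian transformations (restrict the naturality squares to the objects $\sheaf\yon C$).

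For the pseudo-inverse, let $F\colon(\cat{C},\M_{\cat{C}})\to(\cat{D},\M_{\cat{D}})$ be a geometric $\M$-functor. By Proposition~\ref{GeomMFunctor} it preserves unions of $\M$-subobjects, so by Lemmas~\ref{FreeCocompLemma3} and~\ref{FreeCocompLemma2} the functor $\tilde F = \Lan_{\yon}F$ sends covering sieves to isomorphisms and its right adjoint factors through $\Sh(\cat{C})$; in particular $F\in\Cat_i(\cat{C},\cat{D})$, and Lemma~\ref{FreeCocompLemma1} produces a cocontinuous $H = \Lan_{\sheaf\yon}F\colon\Sh(\cat{C})\to\cat{D}$ with $H\sheaf\cong\tilde F$ and $H\sheaf\yon\cong F$. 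Now $H$ is an $\M$-functor $\Sh_{\M}(\cat{C})\to(\cat{D},\M_{\cat{D}})$: indeed $H\cong\tilde F\incl$ with $\incl\colon\Sh(\cat{C})\hookrightarrow\PSh(\cat{C})$ the inclusion and $\tilde F$ an $\M$-functor by the Garner-Lin lemma above, while $\M_{\Sh(\cat{C})} = \M_{\PSh(\cat{C})}\cap\Sh(\cat{C})$ and pullbacks of sheaves are computed as in presheaves, so $H$ inherits both the preservation of $\M$-maps and of $\M$-pullbacks from $\tilde F$. Being cocontinuous between geometric $\M$-categories, $H$ is then geometric. Finally, a natural isomorphism has pullback naturality squares, so the unit $F\cong(\Lan_{\sheaf\yon}F)\sheaf\yon$ and counit $\Lan_{\sheaf\yon}(H\sheaf\yon)\cong H$ of the equivalence of Lemma~\ref{FreeCocompLemma1} are $\M$-cartesian with components in the relevant hom-categories, so $(-)\circ\sheaf\yon$ is essentially surjective on each hom-category.

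What remains is full faithfulness on hom-categories: for cocontinuous geometric $\M$-functors $H,H'$, a natural transformation $\beta\colon H\To H'$ is $\M$-cartesian if and only if $\beta\sheaf\yon$ is. The \emph{only if} direction is immediate, and the \emph{if} direction is the step I expect to be the main obstacle. Since $H,H'$ are cocontinuous, the naturality square of $\beta$ at an arbitrary map $P\to Q$ of $\Sh(\cat{C})$ is a colimit of naturality squares at maps between objects of the form $\sheaf\yon C$; the task is to show this particular colimit of pullback squares is again a pullback. This is where the geometric hypothesis on $\cat{D}$ does real work: one uses the strong pullback-stability of colimits of matching diagrams in a cocomplete geometric $\M$-category (Theorem~\ref{ParCMJoin}(3), together with the complete Heyting algebra structure on each $\Sub_{\M}(C)$) to descend the pullbacks. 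Granting this, the equivalence of Lemma~\ref{FreeCocompLemma1} restricts to the asserted equivalence of categories, with pseudo-inverse given by $\Lan_{\sheaf\yon}$.
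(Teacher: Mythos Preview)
Your approach matches the paper's: both use the Garner--Lin presheaf equivalence together with Lemmas~\ref{FreeCocompLemma1}--\ref{FreeCocompLemma3} to produce $\tilde F\incl$ as the essentially surjective inverse, and both observe that $\tilde F\incl$ is an $\M$-functor because $\tilde F$ is and $\M_{\Sh(\cat{C})}=\M_{\PSh(\cat{C})}\cap\Sh(\cat{C})$. The paper's treatment of full faithfulness is a single line invoking Lemma~\ref{FreeCocompLemma1}, so you are actually more careful than the paper in isolating the $\M$-cartesian lifting question.

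That said, the resolution of this step is simpler than your sketch suggests, and does not need a general ``colimit of pullback squares is a pullback'' argument in $\cat{D}$. Given cocontinuous $\M$-functors $H,H'\colon\Sh_{\M}(\cat{C})\to(\cat{D},\M_{\cat{D}})$ and an $\M$-cartesian $\alpha\colon H\sheaf\yon\Rightarrow H'\sheaf\yon$, note that $H\sheaf$ and $H'\sheaf$ are cocontinuous $\M$-functors $\PSh_{\M}(\cat{C})\to(\cat{D},\M_{\cat{D}})$; by the Garner--Lin equivalence at the \emph{presheaf} level (which is already stated for $\M$-cartesian $2$-cells) the unique extension $\tilde\alpha\colon H\sheaf\Rightarrow H'\sheaf$ is $\M$-cartesian. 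The unique $\beta\colon H\Rightarrow H'$ supplied by Lemma~\ref{FreeCocompLemma1} satisfies $\beta\cong\tilde\alpha\cdot\incl$ (using $\sheaf\incl\cong 1$), and whiskering an $\M$-cartesian transformation by the $\M$-functor $\incl$ keeps every naturality square a pullback. So $\beta$ is $\M$-cartesian, and full faithfulness follows without invoking stability of colimits in $\cat{D}$.
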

	
	\begin{proof}
		We first show that $(-) \circ \sheaf\yon$ is essentially surjective on objects. From \cite{GL}, we know that the following is an equivalence
		of categories:
			$$ (-) \circ \yon \colon \M\cat{Cocomp}(\PSh_{\M}(\cat{C}),(\cat{D},\M_{\cat{D}})) 
				\to \M\Cat((\cat{C},\M_{\cat{C}}),(\cat{D},\M_{\cat{D}})) $$
		So for every $F\colon(\cat{C},\M_{\cat{C}})\to(\cat{D},\M_{\cat{D}})$, there is a cocontinuous 
		$\tilde{F}\colon \PSh_{\M}(\cat{C})\to(\cat{D},\M_{\cat{D}})$. But applying Lemmas \ref{FreeCocompLemma3},
		\ref{FreeCocompLemma2} and \ref{FreeCocompLemma1} in succession gives a cocontinuous functor
		$\tilde{F}\incl \colon \Sh(\cat{C})\to\cat{D}$ such that $\tilde{F}\incl \sheaf\yon \cong F$, since $F$ is geometric. Now $\tilde{F}\circ\incl$ 
		is the composite of $\M$-functors, and so $(-) \circ \sheaf\yon$ is essentially surjective on objects.
		The fact $(-) \circ \sheaf\yon$ is fully faithful follows from Lemma \ref{FreeCocompLemma1}, and therefore $(-) \circ \sheaf\yon$ is an 
		equivalence of categories.
	\end{proof}

\section{Free cocompletion of join restriction categories}\label{sec4}
	In light of the $2$-equivalence between $\cat{g}\M\Cat$ and $\cat{jrCat}_s$, we may now use the previous result
	to give the free cocompletion of any join restriction category. Indeed, this is what we will do in this section.
	But let us begin with the definitions of a cocomplete restriction category, and cocomplete join restriction category.
	
	\begin{Defn}[Garner-Lin]
		A restriction category $\cat{X}$ is \emph{cocomplete} if it is split, its subcategory of total maps
		$\Total(\cat{X})$ is cocomplete, and the inclusion $\Total(\cat{X})\hookrightarrow\cat{X}$ 
		preserves colimits. Also, a restriction functor $F\colon\cat{X}\to\cat{Y}$ is called \emph{cocontinuous} if 
		the underlying functor $\Total(F)\colon\Total(X)\to\Total(Y)$ is cocontinuous. There is a
		$2$-category $\cat{rCocomp}$ of cocomplete restriction categories, cocontinuous restriction functors
		and restriction transformations.
	\end{Defn}

	\begin{Defn}
		A join restriction category $\cat{X}$ is \emph{cocomplete} if it is cocomplete as a restriction category.
		Also, a join restriction functor between join restriction categories $F\colon \cat{X}\to\cat{Y}$ is
		called \emph{cocontinuous} if $\Total(F)$ is cocontinuous. There is a $2$-category $\cat{jrCocomp}$ of
		cocomplete join restriction categories, cocontinuous restriction functors and restriction transformations.
	\end{Defn}
	
	Observe that we have omitted the term ``join'' in describing the $1$-cells of $\cat{jrCocomp}$. The reason
	for this is as follows. As $\Par$ and $\M\Total$ are $2$-equivalences, every split join restriction category $\cat{X}$
	may be rewritten as $\cat{X} \cong \Par(\Total(\cat{X}),\M_{\cat{X}})$, where $\M_{\cat{X}}$
	are the restriction monics in $\cat{X}$ \cite{CL1}. So if $F\colon \cat{X}\to\cat{Y}$ is a cocontinuous 
	restriction functor between split join restriction categories, then 
	$\M\Total(F)$ from $(\Total(\cat{X}),\M_{\cat{X}})$ to $(\Total(\cat{Y}),\M_{\cat{Y}})$ 
	is a cocontinuous $\M$-functor. But since cocontinuous $\M$-functors preserve joins of $\M$-subobjects,
	it follows that $F \cong \Par(\M\Total(F))$ is a join restriction functor by Proposition~\ref{GeomMFunctor}.
	
	We now describe the free cocompletion of any join restriction restriction category. Recall from \cite{CL1} that 
	the inclusion $\cat{r}\Cat_s \hookrightarrow \cat{r}\Cat$ has a left biadjoint $\msf{K}_r$, and the unit of this
	biadjoint $J$ at $\cat{X}$ is a restriction functor from $\cat{X}$ to $\msf{K}_r(\cat{X})$. It is easy to check that
	if $\cat{X}$ is a join restriction category, then so is $\msf{K}_r(\cat{X})$. Also, the fact
	$\cat{jrCocomp}$ and $\cat{g}\M\cat{Cocomp}$ are $2$-equivalent follows from their definitions. So 
	consider the following solid diagram:
	
	$$ \bfig
		\morphism(0,600)|a|/@{>}@<6pt>/<1000,0>[\cat{jr}\Cat`\cat{jr}\Cat_s;\msf{K}_r]
		\morphism(1000,600)|a|/@{{ (}->}@<6pt>/<-1000,0>[\cat{jr}\Cat_s`\cat{jr}\Cat;]
		\place(500,592)[\bot]
		\morphism(1000,600)|a|/@{>}@<6pt>/<1000,0>[\cat{jr}\Cat_s`\cat{g}\M\Cat;\M\Total]
		\morphism(2000,600)|b|/@{>}@<6pt>/<-1000,0>[\cat{g}\M\Cat`\cat{jr}\Cat_s;\Par]
		\place(1500,592)[\simeq]
		\morphism(2000,600)|r|/@{-->}@<6pt>/<0,-600>[\cat{g}\M\Cat`\cat{g}\M\cat{Cocomp}.;\Sh_{\M}]
		\morphism(2000,0)|l|/@{>}@<6pt>/<0,600>[\cat{g}\M\cat{Cocomp}.`\cat{g}\M\Cat;V]
		\morphism(1000,0)|a|/@{>}@<6pt>/<1000,0>[\cat{jrCocomp}`\cat{g}\M\cat{Cocomp}.;\M\Total]
		\morphism(2000,0)|b|/@{>}@<6pt>/<-1000,0>[\cat{g}\M\cat{Cocomp}.`\cat{jrCocomp};\Par]
		\place(1500,0)[\simeq]
		\morphism(1000,600)|r|/@{-->}@<6pt>/<0,-600>[\cat{jr}\Cat_s`\cat{jrCocomp};]
		\morphism(1000,0)|l|/@{>}@<6pt>/<0,600>[\cat{jrCocomp}`\cat{jr}\Cat_s;U]
	\efig $$
	
	Now let $\cat{X}$ be a small join restriction category. By Theorem \ref{FreeMCocomp}, the forgetful $2$-functor $V$ has a 
	left biadjoint at any \emph{small} geometric $\M$-category, as indicated by the dotted arrow above. It follows that $U$ also has a
	left biadjoint at any small join restriction category $\cat{X}$ given by $\Par(\Sh_{\M}(\M\Total(\cat{X})))$. Therefore,
	the following exhibits the codomain as the free join restriction cocompletion of $X$:
	\begin{equation}\label{Paray}
		 \eta_{\cat{X}}\colon \cat{X} \xrightarrow{J} \msf{K}_r(\cat{X}) \xrightarrow{\cong} \Par(\M\Total(\msf{K}_r(\cat{X}))) 
				\xrightarrow{\Par(\sheaf\yon)} \Par(\Sh_{\M}(\M\Total(\msf{K}_r(\cat{X})))),
	\end{equation}
	in the sense that the following is an equivalence of categories:
	\begin{equation}\label{Unit} 
		(-) \circ \eta_{\cat{X}} \colon\cat{jrCocomp}(\Par(\Sh_{\M}(\M\Total(\msf{K}_r(\cat{X})))),\E) \to \cat{jr}\Cat(\cat{X},\E).
	\end{equation}
	However, as we shall see in the next section, we may express the free cocompletion of any join restriction category in a 
	simpler form via the notion of join restriction presheaves.

\section{Equivalence between sheaves and join restriction presheaves}\label{sec5}
	We saw in the previous section that the free cocompletion of any join restriction category may be given by the partial map category of
	sheaves on some site. The aim of this section will be to present an equivalent category which is also the free cocompletion of
	any join restriction category. In order to do this, we need objects in this category to correspond with
	sheaves on $\Total(\msf{K}_r(\cat{X}))$. In particular, we need a corresponding notion of a matching family, and also that of amalgamation.
	
	\cite{GL} showed that the free cocompletion of a restriction category can be described in terms of restriction presheaves, the definition
	of which is given below (Definition~\ref{RestPresheaf}). As it turns out, the corresponding object we need is a presheaf over a join restriction 
	category (Definition~\ref{JoinRestPresheaf}), which is equipped with its own notion of compatibility and join. The corresponding notions of 
	matching families and amalgamation in a join restriction presheaf may then be described as follows.
	Instead of a matching family for a covering sieve, we have a compatible family of elements of the restriction
	presheaf, and instead of a unique amalgamation of such a matching family, we have a join of compatible families. These join
	restriction presheaves form a join restriction category, and we will show that this category is equivalent to some partial map
	category of sheaves, and hence show that it is indeed the free cocompletion of any join restriction category.
	
	Let us recall the definition of a restriction presheaf.
	
	\begin{Defn}[Garner-Lin]\label{RestPresheaf}
		Let $\cat{X}$ be a restriction category. A \emph{restriction presheaf} over $\cat{X}$ is a presheaf
		$P \colon \cat{X}^{\op}\to \Set$ equipped with a family of maps $\{F_A\}_{A\in\cat{X}}$
			$$ F_A\colon PA \to \cat{X}(A,A), \quad x \mapsto \bar{x} $$
		with each $\bar{x}$ being a restriction idempotent satisfying the following conditions:
		\begin{enumerate}[leftmargin=1.5cm,label=(RP\arabic*)]
			\item $x\cdot \bar{x} =x$;
			\item $\ov{x \cdot \bar{f}} = \bar{x} \circ \bar{f}$;
			\item $\bar{x} \circ g = g \circ \ov{x\cdot g}$.
		\end{enumerate}
	\end{Defn}
	
	For any restriction category $\cat{X}$, there is a restriction category called $\PSh_r(\cat{X})$, the objects of which are 
	restriction presheaves on $\cat{X}$, and the morphisms are natural transformations. In fact, $\PSh_r(\cat{X})$ is a 
	restriction category where the restriction on $\alpha\colon P\To Q$ is defined componentwise at $A\in\cat{X}$ by 
	$\alpha_A(x) = x\cdot\ov{\alpha_A(x)}$.
	
	Recall that any presheaf $P$ over an ordinary category $\cat{C}$ may be regarded as a profunctor from the terminal
	category $\cat{1}$ to $\cat{C}$, or as a bifunctor $P \colon \cat{C}^{\op} \times \cat{1} \to \Set$. Further recall that
	the collage of this $P\colon\cat{C}^{\op} \times \cat{1} \to \Set$, denoted here by $\tilde{P}$, is a category whose 
	objects are the disjoint union of the objects of $\cat{C}$ and $\star$, where $\star$ is the only object in $\cat{1}$
	\cite{Street}. Its hom-sets are defined as follows:
	\begin{align*}
		\tilde{P}(\star,\star) &= \cat{1}(\star,\star) = 1_{\star}; \\
		\tilde{P}(A,B) &= \cat{C}(A,B); \\
		\tilde{P}(A,\star) &= P(A,\star); \\
		\tilde{P}(\star,A) &= \emptyset.
	\end{align*}
	
	If $P\colon \cat{X}^{\op}\to\Set$ is a restriction presheaf, then its collage maybe given a canonical restriction structure.
	Conversely, if the collage of $P\colon\cat{X}^{\op}\to\Set$ is a restriction category, then $P$ may also be given a 
	restriction structure, making it a restriction presheaf. Therefore, the following two results from \cite{GL} follow automatically 
	from their analogues in \cite{CL1}.
	
	\begin{Lemma}
		If $\cat{X}$ is a restriction category, and $P$ is a restriction presheaf on $\cat{X}$, then for all $A\in\cat{X}, 
		x\in PA$ and maps $g\colon B\to A$, we have 
			$$\bar{g} \circ \ov{x\cdot g} = \ov{x\cdot g} $$
		and
			$$\ov{\bar{x} \circ g} = \ov{x\cdot g}. $$
	\end{Lemma}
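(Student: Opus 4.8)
The plan is to reduce both identities to standard restriction-category lemmas applied inside the collage $\tilde{P}$. Recall from the discussion preceding the statement that when $P$ is a restriction presheaf on $\cat{X}$, its collage $\tilde{P}$ carries a canonical restriction structure, under the dictionary: an element $x\in PA$ is a morphism $x\colon A\to\star$; the composite $x\circ g$ for $g\colon B\to A$ in $\cat{X}$ is the action $x\cdot g\in PB$; and the restriction of $x\colon A\to\star$ is $\bar{x}\in\cat{X}(A,A)$. Under this dictionary the two claimed equations become, respectively, the assertions that the restriction of the composite $x\circ g$ lies below $\bar g$ (i.e.\ $\bar g\circ\ov{x\circ g}=\ov{x\circ g}$) and that $\ov{x\circ g}=\ov{\bar x\circ g}$; both of these are elementary consequences of (R1)--(R4) proved in \cite{CL1}. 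So the quickest route is simply to invoke those two lemmas in $\tilde P$, exactly as the remark before the statement anticipates.

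If instead one wants a direct argument from (RP1)--(RP3), here is the order I would follow. For the first identity, put $y:=x\cdot g\in PB$ and apply (RP2) to $y$ with the map $g$ (only $\bar g\in\cat{X}(B,B)$ is used), obtaining $\ov{y\cdot\bar g}=\bar y\circ\bar g$. Since $P$ is a functor and $g\circ\bar g=g$ by (R1), functoriality gives $y\cdot\bar g=(x\cdot g)\cdot\bar g=x\cdot(g\circ\bar g)=x\cdot g=y$, so the left-hand side collapses to $\bar y=\ov{x\cdot g}$. Hence $\ov{x\cdot g}=\ov{x\cdot g}\circ\bar g=\bar g\circ\ov{x\cdot g}$, the last step by (R2). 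For the second identity, take the restriction of both sides of (RP3), $\bar x\circ g=g\circ\ov{x\cdot g}$, getting $\ov{\bar x\circ g}=\ov{g\circ\ov{x\cdot g}}$. As $\ov{x\cdot g}$ is a restriction idempotent it equals its own restriction, so (R3) applies and gives $\ov{g\circ\ov{x\cdot g}}=\bar g\circ\ov{x\cdot g}$, which by the first identity is $\ov{x\cdot g}$. Therefore $\ov{\bar x\circ g}=\ov{x\cdot g}$.

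The computation is routine; the only things requiring care are the bookkeeping of which object each idempotent lives over (so that the instances of (RP2), (RP3), (R2) and (R3) invoked are the legitimate ones) and spotting the functoriality cancellation $(x\cdot g)\cdot\bar g=x\cdot g$. The one mild structural point is that the second identity is cleanest when derived from the first via (RP3) and (R3), rather than attacked head-on.
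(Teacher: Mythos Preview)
Your proposal is correct and follows exactly the paper's approach: the paper does not give a proof but simply remarks that, since the collage $\tilde P$ is a restriction category, both identities ``follow automatically from their analogues in \cite{CL1}''. Your first paragraph is precisely this reduction, and your second paragraph goes further by supplying the explicit (RP1)--(RP3) computation that the paper omits; that computation is valid as written.
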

	
	Note that for all $A\in\cat{X}$, the set $PA$ also has a partial ordering given by $x \le y$ if and only if
	$x = y \cdot \bar{x}$. As in the case of join restriction categories, we may define compatibility
	between elements of the same set $PA$.
	
	\begin{Defn}
		Let $\cat{X}$ be a restriction category and $P$ be a restriction presheaf over $\cat{X}$. For any $A\in\cat{X}$,
		we say that $x,y\in PA$ are compatible if $x\cdot \bar{y} = y\cdot \bar{x}$, and denote this by
			$x\smile y$.
	\end{Defn}
	
	We now present two analogous lemmas (see Lemma~\ref{GuoCompatible}) which follow from the definition of 
	compatibility.
	
	\begin{Lemma}\label{JRPcompatible}
		Let $\cat{X}$ be a restriction category and $P$ a restriction presheaf over $\cat{X}$. Let $A\in\cat{X}$ and
		$x,y\in PA$. Then
		\begin{enumerate}
			\item $x\le y$ implies $x \smile y$, and
			\item $x\smile y$ and $\bar{x}=\bar{y}$ implies $x=y$.
		\end{enumerate}
	\end{Lemma}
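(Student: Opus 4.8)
The plan is to transcribe the proof of Lemma~\ref{GuoCompatible} into the presheaf setting, replacing composition of maps in a hom-set by the action of $\cat{X}$ on $P$ and invoking the restriction-presheaf axioms (RP1)--(RP3) in place of the restriction axioms. Throughout I would use the functoriality identity $(z\cdot f)\cdot g = z\cdot(f\circ g)$ for the action, together with the fact that each $\bar{x}$ is a restriction idempotent of $\cat{X}$, so that in particular $\bar{x}\circ\bar{y}=\bar{y}\circ\bar{x}$ by (R2).

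For (1), I would unfold $x\le y$ to the equation $x = y\cdot\bar{x}$ and then compute
$$ x\cdot\bar{y} = (y\cdot\bar{x})\cdot\bar{y} = y\cdot(\bar{x}\circ\bar{y}) = y\cdot(\bar{y}\circ\bar{x}) = (y\cdot\bar{y})\cdot\bar{x} = y\cdot\bar{x}, $$
where the final equality is (RP1). This says precisely that $x\smile y$.

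For (2), assuming $x\smile y$ and $\bar{x}=\bar{y}$, I would chain the equalities
$$ x = x\cdot\bar{x} = x\cdot\bar{y} = y\cdot\bar{x} = y\cdot\bar{y} = y, $$
using (RP1) at the two ends, the hypothesis $\bar{x}=\bar{y}$ for the second and fourth steps, and the compatibility relation $x\cdot\bar{y}=y\cdot\bar{x}$ for the middle one.

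There is no genuine obstacle here; the only point demanding a moment's care is the bookkeeping of the contravariant action --- keeping straight that all the idempotents in play are endomorphisms of the single object $A$, so that the functoriality of $P$ and axiom (R2) apply exactly as in the proof of Lemma~\ref{GuoCompatible}.
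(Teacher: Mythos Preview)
Your proof is correct and is exactly what the paper intends: it simply states that the proof is essentially the same as for Lemma~\ref{GuoCompatible}, and your transcription into the presheaf setting (using functoriality of the action, (RP1), and (R2)) carries this out faithfully.
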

	
	The proof is essentially the same as for Lemma~\ref{GuoCompatible}. Again, it is the partial ordering which allows us 
	to define the notion of a join restriction presheaf over a join restriction category.

	\begin{Defn}\label{JoinRestPresheaf}
		Let $\cat{X}$ be a join restriction category. A \emph{join restriction presheaf} on $\cat{X}$ is a restriction
		presheaf $P\colon \cat{X}^{\op}\to\Set$ such that for all $A\in\cat{X}$ and all compatible subsets
		$S\subset PA$, the join $\bigvee_{s\in S} s$ exists with respect to the partial ordering on $PA$, and 
		satisfies the following conditions:
		\begin{enumerate}[leftmargin=1.5cm,label=(JRP\arabic*)]
			\item $\ov{\bigvee_{s\in S} s} = \bigvee_{s\in S} \bar{s}$;
			\item $\left( \bigvee_{s\in S} s \right) \cdot g = \bigvee_{s\in S} (s\cdot g)$
		\end{enumerate}
		for all $g\colon B\to A$ and $x\in PA$. Denote by $\PSh_{jr}(\cat{X})$, the full subcategory of $\PSh_r(\cat{X})$
		with join restriction presheaves as its objects.
	\end{Defn}
	
	Again, as in the case of restriction presheaves, it is not difficult to see that a presheaf $P\colon\cat{X}^{\op}\to\Set$
	is a join restriction presheaf if and only if its collage is a join restriction category. Therefore, as with the case
	for join restriction categories \cite[Lemma 3.1.8]{GUO}, the following proposition holds.
	
	\begin{Prop}
		Let $\cat{X}$ be a join restriction category, and let $P$ be a join restriction presheaf. Then for all
		$A\in \cat{X}$, $x\in PA$ and compatible $T\subset \cat{X}(B,A)$,
			$$ x\cdot \left(\bigvee_{t\in T} t \right) = \bigvee_{t\in T} (x\cdot t). $$
	\end{Prop}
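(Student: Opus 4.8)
The plan is to reduce the statement to its already-recorded analogue for join restriction categories via the collage construction. Recall from the discussion preceding the statement that a presheaf $P\colon\cat{X}^{\op}\to\Set$ is a join restriction presheaf precisely when its collage $\tilde P$ is a join restriction category, and that in $\tilde P$ one has $\tilde P(B,A)=\cat X(B,A)$ and $\tilde P(A,\star)=PA$, with the canonical restriction structure on $\tilde P$ restricting to the restriction structure of $\cat X$ on the former hom-sets and to the maps $F_A$ of $P$ on the latter. Consequently the partial orders $f\le g\iff f=g\bar f$, the compatibility relations, and hence the least upper bounds of compatible families all transfer between $\cat X$, $P$ and $\tilde P$ without change.

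With this in place, regard the element $x\in PA$ as a morphism $x\colon A\to\star$ of $\tilde P$, and regard the compatible set $T\subseteq\cat X(B,A)$ as a compatible set of morphisms $B\to A$ of $\tilde P$; the join $\bigvee_{t\in T}t$ computed in $\cat X$ coincides with the one computed in $\tilde P$, and the presheaf action $x\cdot(-)$ is exactly post-composition with $x$ in $\tilde P$. Applying the Proposition of Guo recalled above (\cite[Lemma~3.1.8]{GUO}) inside the join restriction category $\tilde P$, to the morphism $x$ and the compatible family $T$, yields $x\circ\bigl(\bigvee_{t\in T}t\bigr)=\bigvee_{t\in T}(x\circ t)$ in $\tilde P$, where the right-hand join is again the one inherited from $P$. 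Translating back along the collage correspondence gives precisely $x\cdot\bigl(\bigvee_{t\in T}t\bigr)=\bigvee_{t\in T}(x\cdot t)$; in particular the family $\{x\cdot t\}_{t\in T}$ is compatible, so the right-hand side is meaningful.

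The only point needing care — and the main, though minor, obstacle — is the bookkeeping that joins of compatible families are genuinely preserved under passage to and from the collage: one must check that a family of morphisms $B\to A$ (resp. elements of $PA$) which is compatible in $\tilde P$ is compatible in $\cat X$ (resp. $P$) and vice versa, and that a least upper bound in one is a least upper bound in the other. Both are immediate since the restriction idempotents, and hence the orderings, agree on the relevant hom-sets by construction of $\tilde P$. Alternatively, one can bypass the collage and argue directly: putting $\mu=\bigvee_{t\in T}t$ and $\xi=\bigvee_{t\in T}(x\cdot t)$, one first checks $x\cdot t=(x\cdot\mu)\cdot\bar t\le x\cdot\mu$ for each $t$ (using $t=\mu\bar t$, functoriality of the action, and (RP1)--(RP2)), so that $\{x\cdot t\}$ is compatible and $\xi\le x\cdot\mu$, whence $\xi\smile x\cdot\mu$ by Lemma~\ref{JRPcompatible}(1); one then computes $\overline{x\cdot\mu}=\overline{\bar x\circ\mu}=\overline{\bigvee_{t\in T}(\bar x\circ t)}=\bigvee_{t\in T}\overline{x\cdot t}=\bar\xi$, using the earlier lemma relating $\overline{x\cdot g}$ and $\overline{\bar x\circ g}$, Guo's Proposition applied in $\cat X$ to the map $\bar x$, and then (J1) together with (JRP1). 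Since $\xi\smile x\cdot\mu$ and $\bar\xi=\overline{x\cdot\mu}$, Lemma~\ref{JRPcompatible}(2) forces $x\cdot\mu=\xi$, as required.
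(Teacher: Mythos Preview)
Your proposal is correct and your primary argument---reducing to the collage $\tilde P$ and invoking Guo's result \cite[Lemma~3.1.8]{GUO} there---is exactly the route the paper takes; the paper simply asserts the proposition holds ``as with the case for join restriction categories'' once one knows the collage is a join restriction category, and you have spelled out the bookkeeping behind that assertion. Your alternative direct computation, showing $\xi\le x\cdot\mu$ and then matching restrictions via $\overline{x\cdot\mu}=\overline{\bar x\circ\mu}$ and (JRP1), is also sound and gives a self-contained proof that avoids the collage entirely.
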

	
	\subsection{Join restriction category of join restriction presheaves}
	We know that for any restriction category $\cat{X}$, the category of restriction presheaves on $\cat{X}$, $\PSh_r(\cat{X})$,
	is a restriction category. Now suppose $\cat{X}$ is a \emph{join} restriction category. By definition, $\PSh_{jr}(\cat{X})$
	is a restriction category. However, we will show that $\PSh_{jr}(\cat{X})$ is in fact also a join restriction category.
	
	\begin{Lemma}\label{JoinNatTrans}
		Let $\cat{X}$ be a join restriction category, and $P,Q$ join restriction presheaves on $\cat{X}$. Let $S$ be a 
		compatible set of pairwise natural transformations from $P$ to $Q$. Then the natural transformation 
		$\bigvee_{\alpha\in S} \alpha$ defined as follows:
			$$ \left(\bigvee_{\alpha\in S} \alpha\right)_A(x) = \bigvee_{\alpha\in S} \alpha_A(x) $$
		is the join of $S$, and furthermore, satisfies conditions (J1) and (J2).
	\end{Lemma}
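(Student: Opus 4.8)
The plan is to reduce everything to the join restriction presheaf axioms for $P$ and $Q$, applied one component at a time. The crucial first step is a pointwise description of the compatibility relation and of the partial order on the hom-set $\PSh_r(\cat{X})(P,Q)$. Recall that the restriction of a natural transformation is given by $\bar{\alpha}_A(x) = x\cdot\overline{\alpha_A(x)}$; applying naturality of $\alpha$ to the restriction idempotent $\overline{\beta_A(x)}\colon A\to A$ yields $(\alpha\circ\bar{\beta})_A(x) = \alpha_A(x)\cdot\overline{\beta_A(x)}$. Hence $\alpha\smile\beta$ in $\PSh_r(\cat{X})(P,Q)$ if and only if $\alpha_A(x)\smile\beta_A(x)$ in $QA$ for all $A\in\cat{X}$ and $x\in PA$, and the very same computation shows $\alpha\le\beta$ if and only if $\alpha_A(x)\le\beta_A(x)$ pointwise. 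In particular, since $S$ is compatible and $Q$ is a join restriction presheaf, each set $\{\alpha_A(x)\mid\alpha\in S\}\subseteq QA$ is compatible and its join exists, so the given formula does define a function $PA\to QA$ for every $A$.

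Next I would verify that $\sigma$, the assignment $x\mapsto\bigvee_{\alpha\in S}\alpha_A(x)$, is natural. For $g\colon B\to A$ and $x\in PA$, naturality of each $\alpha$ gives $\alpha_B(x\cdot g) = \alpha_A(x)\cdot g$, and then (JRP2) for $Q$ applied to the compatible set $\{\alpha_A(x)\}$ gives $\sigma_B(x\cdot g) = \bigvee_{\alpha}(\alpha_A(x)\cdot g) = \bigl(\bigvee_{\alpha}\alpha_A(x)\bigr)\cdot g = \sigma_A(x)\cdot g$. So $\sigma$ is a morphism of $\PSh_{jr}(\cat{X})$. That $\sigma$ is the join of $S$ is then immediate from the pointwise description of the order: $\alpha\le\sigma$ since $\alpha_A(x)\le\bigvee_{\beta}\beta_A(x) = \sigma_A(x)$, and if $\tau$ is any upper bound of $S$ then $\alpha_A(x)\le\tau_A(x)$ for all $\alpha$, whence $\sigma_A(x) = \bigvee_{\alpha}\alpha_A(x)\le\tau_A(x)$ and $\sigma\le\tau$.

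For (J1), I would compute the restriction of $\sigma$ componentwise: $\bar{\sigma}_A(x) = x\cdot\overline{\sigma_A(x)} = x\cdot\overline{\bigvee_{\alpha}\alpha_A(x)} = x\cdot\bigvee_{\alpha}\overline{\alpha_A(x)}$ by (JRP1) for $Q$. Since the $\overline{\alpha_A(x)}$ are restriction idempotents they are pairwise compatible in $\cat{X}(A,A)$, so by the Proposition expressing $x\cdot\bigl(\bigvee t\bigr)$ as $\bigvee(x\cdot t)$ this equals $\bigvee_{\alpha}(x\cdot\overline{\alpha_A(x)}) = \bigvee_{\alpha}\bar{\alpha}_A(x) = \bigl(\bigvee_{\alpha}\bar{\alpha}\bigr)_A(x)$ — in particular this shows the join $\bigvee_{\alpha}\bar{\alpha}$ exists. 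For (J2), composition in $\PSh_r(\cat{X})$ is computed componentwise, so for $h\colon R\To P$ we get $(\sigma\circ h)_A(y) = \sigma_A(h_A(y)) = \bigvee_{\alpha}\alpha_A(h_A(y)) = \bigvee_{\alpha}(\alpha\circ h)_A(y) = \bigl(\bigvee_{\alpha}(\alpha\circ h)\bigr)_A(y)$, using that $\{\alpha_A(h_A(y))\}$ is the special case $z = h_A(y)$ of the already-established compatible family $\{\alpha_A(z)\}$ (and that precomposition preserves compatibility throughout).

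The main obstacle is the bookkeeping in the first paragraph: obtaining the pointwise reformulations of $\smile$ and $\le$, which relies on the precise form of the restriction of a natural transformation in $\PSh_r(\cat{X})$ and on repeatedly invoking naturality against restriction idempotents. Once that translation is in place, naturality of $\sigma$, its universal property, and the verification of (J1) and (J2) are all routine componentwise applications of the axioms for $P$ and $Q$ together with the earlier Proposition.
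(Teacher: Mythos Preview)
Your proof is correct and slightly different in organisation from the paper's. The paper does not isolate the pointwise reformulation of $\smile$ and $\le$ as a separate lemma; instead it verifies $\alpha'\le\bigvee_{\alpha}\alpha$ and the upper-bound property by direct computations in $QA$ (each half a dozen lines, splitting the join into the $\alpha'$ term and the rest, then using compatibility), and derives (J1) by specialising the upper-bound computation to $\beta=1$. Your route---first translating compatibility and the order on $\PSh_r(\cat{X})(P,Q)$ into pointwise conditions via $(\alpha\circ\bar\beta)_A(x)=\alpha_A(x)\cdot\overline{\beta_A(x)}$, then reading off the join property from the join in $QA$---is cleaner and shorter, and makes transparent why the restriction structure on $\PSh_r(\cat{X})$ is ``componentwise''. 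The naturality check and (J2) are essentially identical in both proofs; for (J1) you use (JRP1) for $Q$ together with the Proposition $x\cdot\bigl(\bigvee t\bigr)=\bigvee(x\cdot t)$, which is a more direct route than the paper's substitution trick. Either argument is fine; yours has the advantage that the pointwise translation, once stated, could be reused elsewhere.
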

	
	\begin{proof}
		We first have to show that $\bigvee_{\alpha\in S} \alpha$ is well-defined. That is, for all $\alpha,\beta\in S$,
		$\alpha_A(x) \ov{\beta_A(x)} = \beta_A(x) \ov{\alpha_A(x)}$ (for all $A\in\cat{X}$ and $x\in PA$). But this 
		follows by definition of restriction in $\PSh_r(\cat{X})$ and the naturality of $\alpha$ and $\beta$. It is also natural
		since for all $g\colon B\to A$,
			$$ \left[\left(\bigvee_{\alpha\in S} \alpha\right)_A(x)\right] \cdot g 
				= \left( \bigvee_{\alpha\in S} \alpha_A(x) \right) \cdot g 
				= \bigvee_{\alpha\in S} \left(\alpha_A(x) \cdot g \right)
				= \bigvee_{\alpha\in S} \alpha_A(x \cdot g) 
				= \left(\bigvee_{\alpha\in S} \alpha\right)_A(x\cdot g) $$
		using the fact $Q$ is a join restriction presheaf, and the naturality of $\alpha\in S$. To show that 
		$\bigvee_{\alpha\in S} \alpha$ really is the join, we have to show $\alpha' \le\bigvee_{\alpha\in S} \alpha$
		for all $\alpha'\in S$, or equivalently, $\alpha'_A(x) = 
		\left(\bigvee_{\alpha\in S} \alpha\right)_A\left(\ov{\alpha'}_A(x)\right)$. But this is true as
		\begin{align*}
			\left(\bigvee_{\alpha\in S} \alpha\right)_A\left(\ov{\alpha'}_A(x)\right)
				&= \left(\bigvee_{\alpha\in S} \alpha\right)_A\left(x\cdot \ov{\alpha'_A(x)}\right)
				= \left[\left(\bigvee_{\alpha\in S} \alpha\right)_A(x)\right] \cdot \ov{\alpha'_A(x)} \\
				&= \left[\bigvee_{\alpha\in S} \alpha_A(x) \right] \cdot \ov{\alpha'_A(x)}
				= \left(\alpha'_A(x) \cdot \ov{\alpha'_A(x)}\right) \vee 
					\bigvee_{\alpha\ne\alpha'} \alpha_A(x) \cdot \ov{\alpha'_A(x)} \\
				&= \alpha'_A(x) \vee \bigvee_{\alpha\ne\alpha'} \alpha'_A(x) \cdot \ov{\alpha_A(x)}
					= \alpha'_A(x)
		\end{align*}
		by compatibility and the fact $\alpha'_A(x) \cdot \ov{\alpha_A(x)} \le \alpha'_A(x)$. Also, if $\alpha\le\beta$
		for all $\alpha\in S$, then $\bigvee_{\alpha\in S} \alpha \le \beta$ since
		\begin{align*}
			\beta_A\left(\ov{\bigvee_{\alpha\in S} \alpha}_A(x)\right) 
				&= \beta_A\left(x\cdot \ov{\bigvee_{\alpha\in S} \alpha_A(x)}\right)
				= \beta_A(x) \cdot \bigvee_{\alpha\in S} \ov{\alpha_A(x)}
				= \bigvee_{\alpha\in S} \beta_A(x)\cdot \ov{\alpha_A(x)} \\
				& = \bigvee_{\alpha\in S} \alpha_A(x) =  \left(\bigvee_{\alpha\in S} \alpha\right)_A(x).
		\end{align*}
		Therefore, for any compatible set of natural transformations $S=\{ \alpha\colon P\To Q\}$,
		$\bigvee_{\alpha\in S} \alpha$ as defined previously is the join of $S$.
		
		To see that this join satisfies (J1), simply replace $\beta$ above by the identity. To see that (J2) is
		satisfied, let $\gamma\colon R\To P$ be a natural transformation and observe that
		$$ \left(\bigvee_{\alpha\in S} \alpha\right)_A( \gamma_A(x)) 
			=\bigvee_{\alpha\in S} \alpha_A(\gamma_A(x)) = \bigvee_{\alpha\in S} (\alpha\gamma)_A(x)
			= \left(\bigvee_{\alpha\in S} \alpha\gamma \right)_A(x). $$
		Therefore, the natural transformation $\bigvee_{\alpha\in S} \alpha$ defined as above really is the join of any 
		compatible $S \subset \PSh_r(\cat{X})(P,Q)$, and furthermore, satisfies conditions (J1) and (J2).
	\end{proof}
	
	The following proposition follows directly from Lemma~\ref{JoinNatTrans}.
	
	\begin{Prop}[Join restriction category of join restriction presheaves]
		Let $\cat{X}$ be a join restriction category. Then $\PSh_{jr}(\cat{X})$ is a join restriction category, with
		joins defined componentwise as in Lemma~\ref{JoinNatTrans} for any compatible
		subset $S \subset \PSh_{jr}(\cat{X})(P,Q)$.
	\end{Prop}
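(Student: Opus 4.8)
The plan is to observe that nearly all of the work has already been done in Lemma~\ref{JoinNatTrans}, so that the proof consists of assembling the pieces and checking that the join produced there genuinely lives in $\PSh_{jr}(\cat X)$. First I would recall that $\PSh_{jr}(\cat X)$ is already a restriction category: it is the full subcategory of $\PSh_r(\cat X)$ spanned by the join restriction presheaves, and it is closed under the restriction operation, since if $P,Q$ are join restriction presheaves and $\alpha\colon P\To Q$ is a natural transformation then $\bar\alpha\colon P\To P$ is again a natural transformation between join restriction presheaves, hence a morphism of $\PSh_{jr}(\cat X)$; the axioms (R1)--(R4) are then inherited from $\PSh_r(\cat X)$. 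In particular the partial order on $\PSh_{jr}(\cat X)(P,Q)$ is the restriction of that on $\PSh_r(\cat X)(P,Q)$, both being given by $\alpha\le\beta$ iff $\alpha=\beta\bar\alpha$ with the same restriction, and likewise the relation $\alpha\smile\beta$ means the same thing in either hom-set.

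Next I would fix join restriction presheaves $P,Q$ and a compatible subset $S\subseteq\PSh_{jr}(\cat X)(P,Q)$, and check the one point that needs care before Lemma~\ref{JoinNatTrans} can be invoked: that the pointwise joins $\bigvee_{\alpha\in S}\alpha_A(x)$ actually exist in $QA$ for each $A\in\cat X$ and $x\in PA$. Since $Q$ is a join restriction presheaf, this reduces to showing that $\{\alpha_A(x)\mid\alpha\in S\}$ is a compatible subset of $QA$, which is exactly the well-definedness computation at the start of the proof of Lemma~\ref{JoinNatTrans}, using compatibility of $S$, naturality of each $\alpha$, and the description of restriction in $\PSh_r(\cat X)$. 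With the pointwise joins in hand, Lemma~\ref{JoinNatTrans} tells us that the componentwise formula defines a natural transformation $\bigvee_{\alpha\in S}\alpha\colon P\To Q$ --- hence a morphism of the full subcategory $\PSh_{jr}(\cat X)$ --- which is the join of $S$ in $\PSh_r(\cat X)(P,Q)$ and satisfies (J1) and (J2). Because the two poset structures agree as noted above, this element is also the join of $S$ inside $\PSh_{jr}(\cat X)(P,Q)$, and (J1), (J2) continue to hold there. Thus $\PSh_{jr}(\cat X)$ satisfies all the axioms of a join restriction category, with joins computed componentwise as claimed.

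There is essentially no hard step here: the content is entirely in Lemma~\ref{JoinNatTrans}. The only subtlety, which is quick to dispatch, is confirming that passing to the full subcategory $\PSh_{jr}(\cat X)$ costs nothing --- that the componentwise join of a compatible family of morphisms between join restriction presheaves is again a morphism of $\PSh_{jr}(\cat X)$, and that it is computed against the same order and restriction --- so that forming joins does not take us outside $\PSh_{jr}(\cat X)$.
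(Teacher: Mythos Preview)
Your proposal is correct and takes essentially the same approach as the paper, which simply states that the proposition follows directly from Lemma~\ref{JoinNatTrans}. You have spelled out in more detail exactly what ``follows directly'' entails --- closure of the full subcategory under restriction, agreement of the partial orders, and the fact that the componentwise join lands back in $\PSh_{jr}(\cat X)$ --- but these are precisely the routine checks implicit in the paper's one-line proof.
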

	
	The following are some properties of morphisms in $\PSh_{jr}(\cat{X})$.
		
	\begin{Prop}
		Let $\cat{X}$ be a join restriction category, and let $\alpha\colon P\To Q$ be a morphism in
		$\PSh_{jr}(\cat{X})$. Let $A\in\cat{X}$ and $S\subset PA$ be compatible. Then the set
		$\alpha_A(S) = \{ \alpha_A(x) \mid x\in PA\}$ is also compatible. In addition, if $x,y\in PA$
		with $x\le y$, then $\alpha_A(x)\le\alpha_A(y)$.
	\end{Prop}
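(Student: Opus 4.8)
The plan is to reduce both assertions to a handful of elementary consequences of the restriction-presheaf axioms (RP1)--(RP3) together with the componentwise formula $\bar\alpha_A(x)=x\cdot\ov{\alpha_A(x)}$ for the restriction in $\PSh_r(\cat{X})$; observe also that, since ``$S$ compatible'' means pairwise compatible, it is enough to treat a single pair $x\smile y$ in $PA$ and show $\alpha_A(x)\smile\alpha_A(y)$.

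First I would record three observations. (i) Naturality of $\alpha\colon P\To Q$ gives $\alpha_B(x\cdot f)=\alpha_A(x)\cdot f$ for all $f\colon B\to A$ and $x\in PA$; we use this mostly when $f$ is a restriction idempotent on $A$. (ii) For any $q\in QA$ and restriction idempotent $e\colon A\to A$ one has $q\cdot e\le q$, since by (RP2) and (RP1) $q\cdot\ov{q\cdot e}=q\cdot(\bar q\circ e)=(q\cdot\bar q)\cdot e=q\cdot e$; consequently, if $e\le f$ are restriction idempotents on $A$ then $q\cdot e=q\cdot(f\circ e)=(q\cdot f)\cdot e\le q\cdot f$. (iii) $\ov{\alpha_A(x)}\le\bar x$ in $\cat{X}(A,A)$: from $\alpha_A(x)=\alpha_A(x\cdot\bar x)=\alpha_A(x)\cdot\bar x$ (by (RP1) and (i)), taking restrictions and using (RP2) yields $\ov{\alpha_A(x)}=\ov{\alpha_A(x)}\circ\bar x$. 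Monotonicity is then immediate: if $x\le y$, i.e.\ $x=y\cdot\bar x$, then $\alpha_A(x)=\alpha_A(y\cdot\bar x)=\alpha_A(y)\cdot\bar x\le\alpha_A(y)$ by (i) and (ii).

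For compatibility, assume $x\smile y$, so $x\cdot\bar y=y\cdot\bar x$, and put $u=\alpha_A(x)\cdot\ov{\alpha_A(y)}$ and $v=\alpha_A(y)\cdot\ov{\alpha_A(x)}$; the goal is $u=v$, and I would deduce this from Lemma~\ref{JRPcompatible}(2) by checking that $u\smile v$ and $\bar u=\bar v$. Equality of restrictions is immediate: by (RP2) and (R2), $\bar u=\ov{\alpha_A(x)}\circ\ov{\alpha_A(y)}=\ov{\alpha_A(y)}\circ\ov{\alpha_A(x)}=\bar v$. For $u\smile v$, set $w=\alpha_A(x)\cdot\bar y$; by (i) and $x\smile y$ we also have $w=\alpha_A(x\cdot\bar y)=\alpha_A(y\cdot\bar x)=\alpha_A(y)\cdot\bar x$, so $w$ is symmetric in $x$ and $y$. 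Since $\ov{\alpha_A(y)}\le\bar y$ by (iii), observation (ii) gives $u=\alpha_A(x)\cdot\ov{\alpha_A(y)}\le\alpha_A(x)\cdot\bar y=w$, and symmetrically $v\le\alpha_A(y)\cdot\bar x=w$. Finally, two elements lying below a common element $w$ are always compatible, since from $u=w\cdot\bar u$ and $v=w\cdot\bar v$ one gets $u\cdot\bar v=w\cdot(\bar u\circ\bar v)=v\cdot\bar u$ by (R2). Hence $u\smile v$ and $\bar u=\bar v$, so $u=v$, that is $\alpha_A(x)\smile\alpha_A(y)$.

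I expect the only step requiring any thought is the compatibility part, where the crux is spotting the inequality $\ov{\alpha_A(x)}\le\bar x$ in (iii) --- it is exactly this that pins both $u$ and $v$ below the single element $w=\alpha_A(x)\cdot\bar y=\alpha_A(y)\cdot\bar x$ --- together with the reduction via Lemma~\ref{JRPcompatible}(2); everything else is a routine unwinding of (RP1)--(RP3) and of the restriction formula in $\PSh_r(\cat{X})$.
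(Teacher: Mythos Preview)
Your proof is correct. The monotonicity argument is essentially the same as the paper's: both rely on naturality of $\alpha$ together with the key inequality $\ov{\alpha_A(x)}\le\bar x$, and differ only in whether one lands on $\alpha_A(x)=\alpha_A(y)\cdot\bar x$ or on $\alpha_A(x)=\alpha_A(y)\cdot\ov{\alpha_A(x)}$ before concluding.

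For compatibility, however, you take a genuinely different route. The paper argues by a direct chain of equalities that $\alpha_A(x)\cdot\ov{\alpha_A(y)}=\alpha_A(y\cdot\bar x)$, using naturality, the inequality $\ov{\alpha_A(y)}\le\bar y$, and the identity $\alpha_A(\bar\alpha_A(y))=\alpha_A(y)$; symmetry and $x\smile y$ then finish. You instead reduce $u=v$ to the pair of conditions $u\smile v$ and $\bar u=\bar v$ via Lemma~\ref{JRPcompatible}(2), and establish $u\smile v$ by exhibiting the common upper bound $w=\alpha_A(x)\cdot\bar y=\alpha_A(y)\cdot\bar x$. The paper's route is shorter and yields the slightly stronger explicit formula $\alpha_A(x)\cdot\ov{\alpha_A(y)}=\alpha_A(y\cdot\bar x)$, which is reusable elsewhere. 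Your route is more structural and makes transparent why compatibility is inherited: both $u$ and $v$ are restrictions of a single element $w$, and any two such restrictions are automatically compatible. Both arguments hinge on the same crucial observation $\ov{\alpha_A(x)}\le\bar x$.
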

	
	\begin{proof}\label{AlphaCompatible}
		Let $x,y\in S$, and observe that it is enough to show that 
		$\alpha_A(x)\cdot\ov{\alpha_A(y)} = \alpha_A(y\cdot \bar{x})$ (by interchanging $x$ and $y$
		and using the fact $x\smile y$). Since $\ov{\alpha_A(y)} \le \bar{y}$
		(as $\alpha_A(y) = \alpha_A(y\cdot \bar{y})$), we have
		\begin{align*}
			\alpha_A(x)\cdot\ov{\alpha_A(y)} &= \alpha_A\left(x \cdot\ov{\alpha_A(y)} \right)
				= \alpha_A\left(x \cdot \left(\bar{y} \circ \ov{\alpha_A(y)}\right) \right)
				= \alpha_A\left( \left(y \cdot \bar{x}\right) \cdot \ov{\alpha_A(y)} \right) \\
				&= \alpha_A\left( \bar{\alpha}_A(y) \right) \cdot \bar{x}
				= \alpha_A(y) \cdot \bar{x} = \alpha_A(y\cdot\bar{x}).
		\end{align*}
		Hence, $\alpha_A(S)$ is compatible if $S$ is compatible.
		
		Now if $x\le y$, then
		$$ \alpha_A(x) = \alpha_A(x)\cdot\ov{\alpha_A(x)} = \alpha_A(y\cdot\bar{x}) \cdot\ov{\alpha_A(x)}
			= \alpha_A(y)\cdot \left(\ov{\alpha_A(x)}\circ\bar{x}\right) 
			= \alpha_A(y)\cdot \ov{\alpha_A(x)} $$
		since $\ov{\alpha_A(x)} \le\bar{x}$. Therefore, $x\le y$ implies $\alpha_A(x)\le\alpha_A(y)$.
	\end{proof}
	
	\begin{Prop}
		Let $\alpha\colon P\To Q$ be a morphism in $\PSh_{jr}(\cat{X})$. Let $A\in\cat{X}$, and
		let $S\subset PA$ be compatible. Then
			$$ \alpha_A\left( \bigvee_{x\in S} x\right) = \bigvee_{x\in S} \alpha_A(x). $$
		In other words, components of natural transformations preserve joins.
	\end{Prop}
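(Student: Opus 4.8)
The plan is to establish the two inequalities $\bigvee_{x \in S} \alpha_A(x) \le \alpha_A\!\left(\bigvee_{x \in S} x\right)$ and $\alpha_A\!\left(\bigvee_{x \in S} x\right) \le \bigvee_{x \in S} \alpha_A(x)$ and then conclude equality. Write $s = \bigvee_{x \in S} x$ in $PA$, and note that $t = \bigvee_{x \in S} \alpha_A(x)$ exists in $QA$, since $\alpha_A(S)$ is compatible by the preceding proposition on components of morphisms and $Q$ is a join restriction presheaf. The first inequality is immediate: as $x \le s$ for each $x \in S$, that same proposition gives $\alpha_A(x) \le \alpha_A(s)$, so $\alpha_A(s)$ is an upper bound for $\alpha_A(S)$ and hence $t \le \alpha_A(s)$.

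For the reverse inequality I would verify the identity $\alpha_A(s) = t \cdot \overline{\alpha_A(s)}$, which is by definition exactly $\alpha_A(s) \le t$. First, (JRP2) for $Q$ moves the restriction idempotent inside the join: $t \cdot \overline{\alpha_A(s)} = \bigvee_{x \in S}\bigl(\alpha_A(x) \cdot \overline{\alpha_A(s)}\bigr)$. The heart of the argument is the per-term rewrite $\alpha_A(x) \cdot \overline{\alpha_A(s)} = \alpha_A(s) \cdot \bar{x}$: writing $x = s \cdot \bar{x}$, pushing the commuting restriction idempotents $\bar{x}$ and $\overline{\alpha_A(s)}$ through $\alpha_A$ by naturality, and using $\alpha_A\bigl(s \cdot \overline{\alpha_A(s)}\bigr) = \alpha_A(s) \cdot \overline{\alpha_A(s)} = \alpha_A(s)$ (naturality of $\alpha$ together with (RP1) for $Q$) collapses the left side to the right. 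Substituting back, $t \cdot \overline{\alpha_A(s)} = \bigvee_{x \in S}\bigl(\alpha_A(s) \cdot \bar{x}\bigr)$; since any family of restriction idempotents is pairwise compatible by (R2), the proposition on the action of a compatible family of maps on an element factors this as $\alpha_A(s) \cdot \bigvee_{x \in S} \bar{x} = \alpha_A(s) \cdot \bar{s} = \alpha_A(s)$, using (JRP1) for $P$ for the middle equality and (RP1) for the last. Combining the two inequalities gives $\alpha_A(s) = t$.

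The one genuinely delicate step is the per-term identity $\alpha_A(x) \cdot \overline{\alpha_A(s)} = \alpha_A(s) \cdot \bar{x}$; everything else is routine manipulation of the join axioms. The idea behind it is that $s \cdot \overline{\alpha_A(s)}$ already carries all the information of $\alpha_A(s)$, so cutting $x = s \cdot \bar{x}$ down by $\overline{\alpha_A(s)}$ and applying $\alpha_A$ simply recovers $\alpha_A(s)$ restricted along $\bar{x}$. Beyond keeping the indices and the two auxiliary propositions straight, I do not anticipate any obstacle.
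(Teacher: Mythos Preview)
Your argument is correct. Both you and the paper begin the same way, obtaining $t := \bigvee_{x\in S}\alpha_A(x) \le \alpha_A(s)$ from the order-preservation of $\alpha_A$. The divergence is in the second half: the paper establishes equality by showing the two elements are compatible (which follows from $t \le \alpha_A(s)$) and then proving their restrictions agree, invoking Lemma~\ref{JRPcompatible}(2); the restriction computation goes through an intermediate identity $\bar{\alpha}_A\!\left(\bigvee_{x\in S} x\right) = \bigvee_{x\in S}\bar{\alpha}_A(x)$. You instead prove the reverse inequality $\alpha_A(s)\le t$ directly by verifying $t\cdot\overline{\alpha_A(s)} = \alpha_A(s)$, using the clean per-term rewrite $\alpha_A(x)\cdot\overline{\alpha_A(s)} = \alpha_A(s)\cdot\bar{x}$ (which is just naturality $\alpha_A(x)=\alpha_A(s)\cdot\bar{x}$ followed by (R2) and (RP1)) and then collapsing the join via $\bigvee\bar{x}=\bar{s}$. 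Your route is slightly more economical: it avoids the detour through $\bar{\alpha}_A$ and does not need the compatibility lemma, at the cost of relying on the proposition $q\cdot\bigvee t = \bigvee(q\cdot t)$, which the paper's argument does not use. Either way the essential content is the same manipulation of (JRP1), (JRP2) and naturality.
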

	
	\begin{proof}
		To prove equality, we will show they are compatible, and then show that their restrictions are equal.
		Now by definition, $x\le\bigvee_{x\in S} x$, which means 
		$\alpha_A(x) \le \alpha_A\left(\bigvee_{x\in S} x\right)$ by Proposition~\ref{AlphaCompatible}. Therefore, 
		$\bigvee_{x\in S} \alpha_A(x) \le \alpha_A\left(\bigvee_{x\in S} x\right)$, and hence 
		$\bigvee_{x\in S} \alpha_A(x) \smile \alpha_A\left(\bigvee_{x\in S} x\right)$.
		
		To show that their restrictions are equal, we first show 
		$\bar{\alpha}_A\left( \bigvee_{x\in S} x\right) = \bigvee_{x\in S} \bar{\alpha}_A(x)$. This is true since
		\begin{align*}
			\bar{\alpha}_A\left( \bigvee_{x\in S} x\right) 
				&= \left( \bigvee_{x\in S} x\right) \cdot \ov{\alpha_A\left( \bigvee_{y\in S} y\right)}
				= \bigvee_{x\in S} x \cdot \ov{\alpha_A\left( \bigvee_{y\in S} y\right)}
				= \bigvee_{x\in S} x \cdot \left(\ov{\alpha_A\left( \bigvee_{y\in S} y\right)} \circ\bar{x}\right) \\
				&= \bigvee_{x\in S} x \cdot \ov{\alpha_A\left( \bigvee_{y\in S} y\right) \cdot \bar{x}}
				= \bigvee_{x\in S} x \cdot \ov{\alpha_A\left( \bigvee_{y\in S} y \cdot \bar{x}\right)} \\
				&= \bigvee_{x\in S} x \cdot 
					\ov{\alpha_A\left( x\cdot\bar{x} \vee \bigvee_{y\ne x} y \cdot \bar{x}\right)}
				= \bigvee_{x\in S} x \cdot 
					\ov{\alpha_A\left( x \vee \bigvee_{y\ne x} x \cdot \bar{y}\right)} \\
				&= \bigvee_{x\in S} x \cdot \ov{\alpha_A(x)} 
				= \bigvee_{x\in S} \bar{\alpha}_A(x).
		\end{align*}
		Observing that $\ov{\alpha_A(x)} = \ov{\bar{\alpha}_A(x)}$, we then have
		\begin{align*}
			\ov{\alpha_A\left( \bigvee_{x\in S} x\right)} &= \ov{\ov{\alpha}_A\left( \bigvee_{x\in S} x\right)}
				= \ov{\bigvee_{x\in S} \bar{\alpha}_A(x)} = \ov{\bigvee_{x\in S} \alpha_A(x)},
		\end{align*}
		which means the restrictions of $\alpha_A\left( \bigvee_{x\in S} x\right)$ and 
		$\bigvee_{x\in S} \alpha_A(x)$ are equal. Therefore, as $\alpha_A\left( \bigvee_{x\in S} x\right)$
		and $\bigvee_{x\in S} \alpha_A(x)$ are compatible, they must be equal.
	\end{proof}
	
	Having introduced join restriction presheaves, our next goal is to show that for any small geometric $\M$-category 
	$(\cat{C},\M)$, $\Par(\Sh_{\M}(\cat{C}))$ and $\PSh_{jr}(\Par(\cat{C},\M))$ are equivalent as join restriction 
	categories. Recall that for any $\M$-category $(\cat{C},\M)$, there was an equivalence of $\M$-categories 
	$F\colon\PSh_{\M}(\cat{C}) \to \M\Total(\PSh_r(\Par(\cat{C},\M)))$, which on objects, takes presheaves
	$P$ on $\cat{C}$ to presheaves $\tilde{P}$ on $\Par(\cat{C},\M)$, with 
	$\tilde{P}(X)=\{ (m,f) \mid m\in\M, f\in P(\dom\,m)\}$ for all $X\in\Par(\cat{C},\M)$ \cite{GL}. By the fact
	that $\M\Cat$ and $\cat{rCat}_s$ are $2$-equivalent, we then have an equivalence of restriction categories
	$L\colon\Par(\PSh_{\M}(\cat{C})) \to \PSh_r(\Par(\cat{C},\M))$ (the transpose of $F$). Explicitly,
	$L=\Phi^{-1}_{\PSh_r(\Par(\cat{C},\M))} \circ \Par(F)$, where $\Phi_{\PSh_r(\Par(\cat{C},\M))}$ is the unit
	of the $2$-equivalence between $\M\Cat$ and $\cat{rCat}_s$. We will show that this equivalence $L$
	restricts back to an equivalence between join restriction categories $\Par(\Sh_{\M}(\cat{C}))$ and 
	$\PSh_{jr}(\Par(\cat{C},\M))$.
	
	However, let us first establish the following facts.

	\begin{Lemma}\label{ParShSub}
		Let $(\cat{C},\M)$ be a small geometric $\M$-category. Then $\Par(\Sh(\cat{C}),\M_{\Sh(\cat{C})})$ is a
		full subcategory of $\Par(\PSh(\cat{C}),\M_{\PSh(\cat{C})})$.
	\end{Lemma}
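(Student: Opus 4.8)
The plan is to show that the inclusion $\Par(\Sh(\cat{C}),\M_{\Sh(\cat{C})}) \hookrightarrow \Par(\PSh(\cat{C}),\M_{\PSh(\cat{C})})$ is full and faithful. Faithfulness is immediate since morphisms in both partial map categories are (equivalence classes of) spans, and a span $(m,f)$ with $m \in \M_{\Sh(\cat{C})} \subseteq \M_{\PSh(\cat{C})}$ determines the same data whether we view it in $\Sh(\cat{C})$ or $\PSh(\cat{C})$; two spans that become equal in $\Par(\PSh(\cat{C}),\M_{\PSh(\cat{C})})$ are related by an isomorphism, and that isomorphism lives in $\Sh(\cat{C})$ since $\Sh(\cat{C})$ is a full subcategory of $\PSh(\cat{C})$. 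So the work is all in proving fullness.

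For fullness, I would take sheaves $P, Q \in \Sh(\cat{C})$ and an arbitrary morphism $[(\mu, \phi)] \colon P \to Q$ in $\Par(\PSh(\cat{C}),\M_{\PSh(\cat{C})})$, where $\mu \colon R \To P$ is in $\M_{\PSh(\cat{C})}$ and $\phi \colon R \To Q$ is a natural transformation with $R$ some presheaf. The goal is to produce a representative of this morphism whose ``defined-on'' leg lies in $\M_{\Sh(\cat{C})}$, i.e. to find a sheaf $R'$, a map $\mu' \colon R' \To P$ in $\M_{\Sh(\cat{C})}$, and $\phi' \colon R' \To Q$ with $(\mu',\phi') \sim (\mu,\phi)$. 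The natural candidate is $R' = \sheaf R$, the associated sheaf of $R$, together with the composites obtained by applying $\sheaf$ to $\mu$ and $\phi$ and using that $\sheaf P \cong P$, $\sheaf Q \cong Q$ since $P, Q$ are already sheaves. Concretely, set $\mu' = \sheaf\mu$ (transported across $\sheaf P \cong P$) and $\phi' = \sheaf\phi$ (transported across $\sheaf Q \cong Q$). I must check three things: (i) $\mu'$ is a monomorphism in $\Sh(\cat{C})$ and in fact lies in $\M_{\Sh(\cat{C})}$; (ii) the span $(\mu',\phi')$ represents the same partial map as $(\mu,\phi)$; and (iii) this is natural enough to give a genuine functor-level statement, though since we are only claiming ``full subcategory'' it suffices that every morphism downstairs is hit.

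For (i), the key point is that $\M_{\PSh(\cat{C})}$-maps are characterised by a pullback condition against representables $\yon A \To Q$, and $\M_{\Sh(\cat{C})}$-maps by the same condition against $\sheaf\yon D \To Q$; since $\sheaf$ preserves finite limits (it is left exact) and preserves the relevant pullbacks, and since every $\alpha \colon \sheaf\yon D \To \sheaf Q \cong Q$ factors appropriately, applying $\sheaf$ to the defining pullback square for $\mu \in \M_{\PSh(\cat{C})}$ yields the defining pullback square for $\sheaf\mu \in \M_{\Sh(\cat{C})}$; here I would invoke the already-noted fact that $\M_{\Sh(\cat{C})} = \M_{\PSh(\cat{C})} \cap \Sh(\cat{C})$, so once $\sheaf\mu$ is shown to be a mono of sheaves that is in $\M_{\PSh(\cat{C})}$ we are done — and $\sheaf\mu \in \M_{\PSh(\cat{C})}$ because $\mu$ is and $\sheaf$ is left exact, preserving the pullback squares that define membership. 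For (ii), I would exhibit the comparison map: the unit $\eta_R \colon R \To \sheaf R$ of the sheafification adjunction gives a map of spans from $(\mu,\phi)$ to $(\mu',\phi')$, and to conclude $(\mu',\phi') \sim (\mu,\phi)$ in $\Par(\PSh(\cat{C}),\M_{\PSh(\cat{C})})$ via Lemma~\ref{ParCMInequal} I need $\eta_R$ to be (the comparison exhibiting) an equality of partial maps — this works because $\mu \in \M_{\PSh(\cat{C})}$ forces $R$ to be, up to the partial-map equivalence, determined by a subobject that sheafification does not change the ``image'' of, so the spans agree.

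The main obstacle I expect is step (ii): verifying that passing to the associated sheaf of the apex $R$ does not change the partial map it represents. The subtlety is that $\eta_R \colon R \To \sheaf R$ is generally neither monic nor epic, so one cannot directly say the two spans are literally related by an isomorphism. The resolution should be to observe that when $\mu \colon R \To P$ lies in $\M_{\PSh(\cat{C})}$, it is in particular monic, and then $R$ is (isomorphic to) an $\M_{\PSh(\cat{C})}$-subobject of the sheaf $P$; one then argues that the $\M_{\PSh(\cat{C})}$-subobjects of a sheaf that arise this way are already separated, or at least that their sheafification sits between $R$ and $P$ in a way compatible with $\phi$, so that $(\sheaf\mu, \sheaf\phi)$ and $(\mu,\phi)$ are equal as partial maps. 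This may require unwinding the explicit description of $\M_{\PSh(\cat{C})}$ and using left-exactness of $\sheaf$ one more time to see that $\sheaf\mu$ and $\mu$ have the same pullbacks along maps out of (sheafified) representables, which is exactly what pins down the partial map.
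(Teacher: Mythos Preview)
Your approach via sheafification is different from the paper's and more roundabout, though the final sentence of your proposal points at the right idea. The paper does not pass to $\sheaf R$ at all: it observes that fullness follows once one knows that every $\M_{\PSh(\cat{C})}$-subobject of a sheaf is itself a sheaf, and then proves this directly. Given a basic cover $\{a_i\colon C_i\to C\}$ and a map $S\To R$ from the associated covering sieve, the paper extends it to $\yon C\To P$ using that $P$ is a sheaf, then pulls $\mu\colon R\rightarrowtail P$ back along this extension; by the defining property of $\M_{\PSh(\cat{C})}$ the pullback is $\yon m$ for some $m\in\M$, and since each $a_i$ factors through $m$ one gets $1=\bigvee_i a_i\le m$ in the Heyting algebra $\Sub_{\M}(C)$, whence $m=1$ and the desired extension $\yon C\To R$ exists. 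Uniqueness is immediate because $R$, being a subpresheaf of a sheaf, is separated.

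Your route can be completed, but only by proving essentially the same thing. For two spans to represent the same partial map you need an \emph{isomorphism} of apexes, so step~(ii) forces you to show $\eta_R\colon R\to\sheaf R$ is invertible, i.e.\ that $R$ is already a sheaf. Your hedging (``already separated, or at least \dots'') is not enough: separatedness alone does not give equality of partial maps. The argument you sketch in your last sentence---that $\mu$ and $\sheaf\mu$ have the same pullback along every $\yon B\to P$, hence define the same subpresheaf of $P$---is correct and does the job, but it uses exactly the ingredients the paper uses (the representable-pullback characterisation of $\M_{\PSh(\cat{C})}$, subcanonicality so that $\sheaf\yon B\cong\yon B$, and left exactness of $\sheaf$). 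So the sheafification detour buys nothing: you end up proving $R\cong\sheaf R$ anyway, and the paper's direct verification of the sheaf condition on $R$ is both shorter and more transparent about where the geometric hypothesis on $(\cat{C},\M)$ enters (namely, through $\bigvee_i a_i=1$).
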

	
	\begin{proof}
		Let $P$ and $Q$ be sheaves on $\cat{C}$, and consider a morphism 
		$\incl P \xleftarrow{\mu} R \xrightarrow{\tau} \incl Q$ in $\Par(\PSh_{\M}(\cat{C}))$. We need to find
		a morphism $P \xleftarrow{\mu'} R' \xrightarrow{\tau'} Q$ in $\Par(\Sh_{\M}(\cat{C}))$ such that
		$(\incl \mu',\incl \tau')=(\mu,\tau)$. However, as this will be true if $\M_{\PSh(\cat{C})}$-subobjects
		of sheaves are sheaves, this is what we will prove.
		
		So let $\{ a_i\colon C_i\to C\}_{i\in I}$ be a basic cover of $C\in\cat{C}$, and let $R$ be an
		$\M_{\PSh(\cat{C})}$-subobject of $P$, where $P$ is a sheaf. Consider the subfunctor
		$\incl\colon S \rightarrowtail \yon C$, where $S$ is the covering sieve generated by our basic cover, and let 
		$\alpha\colon S \To R$ be any natural transformation. Since $P$ is a sheaf, there exists a unique extension 
		$\gamma\colon\yon C\To P$ making the outer square of the following diagram commute:
		
			$$ \bfig
				\pullback|mmmm|/>` >->` >->`-->/[\yon D`R`\yon C`P.;\beta`\yon m`\mu'`\gamma]/>`-->` >->/[S;\alpha``\incl]
			\efig $$
		Now pulling back $\mu' \colon R \rightarrowtail P$ along this unique extension $\gamma$ induces a unique map $S \To \yon D$. 
		But the fact $S$ is the covering sieve generated by our basic cover means that the following diagram commutes for every $i\in I$:
			$$ \bfig
				\Vtriangle/ >->`>`>/[C_i`D`C.;`a_i`m]
			\efig $$
		Since $\Sub_{\M}(C)$ is a complete Heyting algebra, taking the join of $\{a_i\}_{i\in I}$ means we have
		$ 1= \bigvee_{i\in I} a_i \le m$, or $m=1$. In other words, the map $\yon m$ is invertible, and so for every
		natural transformation, $S\To R$, there exists an extension $\yon C\To R$ given by the composite
		$\beta \circ (\yon m)^{-1}$. However, as $R$ is a subobject of a sheaf, and therefore separated, this implies that
		this extension is in fact unique. Hence, $R$ is a sheaf and 
		$\Par(\Sh_{\M}(\cat{C}))$ is a full subcategory of $\Par(\PSh_{\M}(\cat{C}))$.
	\end{proof}
	
	\begin{Thm}\label{Equivalence}
		Let $(\cat{C},\M)$ be a small geometric $\M$-category. Then $\Par(\Sh_{\M}(\cat{C}))$ and
		$\PSh_{jr}(\Par(\cat{C},\M))$ are equivalent as join restriction categories.
	\end{Thm}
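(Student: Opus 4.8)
The plan is to upgrade the existing equivalence of restriction categories $L\colon\Par(\PSh_{\M}(\cat{C}))\to\PSh_r(\Par(\cat{C},\M))$ to an equivalence of join restriction categories between the subcategories $\Par(\Sh_{\M}(\cat{C}))$ and $\PSh_{jr}(\Par(\cat{C},\M))$. First I would verify that $L$ restricts to a well-defined equivalence between these subcategories. By Lemma~\ref{ParShSub}, $\Par(\Sh_{\M}(\cat{C}))$ is a full subcategory of $\Par(\PSh_{\M}(\cat{C}))$, and $\PSh_{jr}(\Par(\cat{C},\M))$ is by definition a full subcategory of $\PSh_r(\Par(\cat{C},\M))$; since $L$ is already an equivalence of the ambient restriction categories, it suffices to check that $L$ carries objects of one subcategory to objects of the other (and conversely, via a pseudo-inverse). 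Concretely, on objects $L$ sends a presheaf $P$ on $\cat{C}$ to the presheaf $\tilde P$ on $\Par(\cat{C},\M)$ with $\tilde P(X)=\{(m,f)\mid m\in\M,\ f\in P(\dom m)\}$, so the key claim is: \emph{$P$ is a sheaf on the site $(\cat{C},J)$ if and only if $\tilde P$ is a join restriction presheaf on $\Par(\cat{C},\M)$}. This is the heart of the argument and the step I expect to be the main obstacle.

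To prove that claim I would translate between the two notions of ``gluing'' exactly along the lines sketched in the introduction. Given a basic cover $\{a_i\colon C_i\to C\}_{i\in I}$ with $\bigvee_i a_i=1$ in $\Sub_{\M}(C)$, a matching family for the associated covering sieve corresponds to a cocone on the matching diagram for $\{a_i\}$, i.e.\ (by Theorem~\ref{ParCMJoin} and the subcanonicity lemma) to a family $\{x_i\in P(C_i)\}$ with $x_i\!\cdot\! m_{ji}=x_j\!\cdot\! m_{ij}$. On the $\tilde P$ side, I would show that such a matching family is the same data as a \emph{compatible} family of elements $\{(a_i,x_i)\}\subset\tilde P(C)$: compatibility $(a_i,x_i)\cdot\overline{(a_j,x_j)}=(a_j,x_j)\cdot\overline{(a_i,x_i)}$ unwinds, via the restriction structure on the collage / on $\Par(\cat{C},\M)$, precisely to the matching condition over the pullbacks $C_iC_j$ (this mirrors the computation in the proof that $\Sigma$ is a separated presheaf). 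Then a unique amalgamation $x\in P(C)$ with $x\!\cdot\! a_i=x_i$ corresponds to a join $\bigvee_i(a_i,x_i)=(\mu,\gamma)$ in $\tilde P(C)$ with $\overline{(\mu,\gamma)}=(\bigvee_i a_i,\,\bigvee_i a_i)=(1,1)$, forcing $\mu=1$, i.e.\ an honest element of $\tilde P(C)$ restricting correctly; uniqueness of the amalgamation matches the uniqueness of the join in the poset $\tilde P(C)$. For a general compatible family in $\tilde P(C)$ one first replaces $\{(m_i,f_i)\}$ by the matching family it determines over the cover $\{m_i\colon\bigcup_i C_i\}$ on $\bigcup_i C_i$, exactly as in the ``only if'' direction of Theorem~\ref{ParCMJoin}; sheafhood of $P$ then produces the amalgamation, and conditions (JRP1), (JRP2) for $\tilde P$ follow from (J1), (J2) in $\Par(\cat{C},\M)$ together with stability of the colimits $\bigcup_i C_i$ under pullback, just as (J1), (J2) were verified in Theorem~\ref{ParCMJoin}.

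Having established the object-level correspondence, I would conclude as follows. The restriction functor $L$ restricts to a fully faithful functor $\Par(\Sh_{\M}(\cat{C}))\to\PSh_{jr}(\Par(\cat{C},\M))$ which is essentially surjective (given a join restriction presheaf $Q$ on $\Par(\cat{C},\M)$, its image under a pseudo-inverse of $L$ is an $\M_{\PSh(\cat{C})}$-subobject situation whose underlying presheaf on $\cat{C}$ is, by the claim applied in reverse, a sheaf), hence an equivalence of the underlying categories. It remains only to check it is a \emph{join} restriction functor, i.e.\ preserves joins of compatible families; but joins in $\PSh_{jr}(\Par(\cat{C},\M))$ are computed componentwise (the proposition following Lemma~\ref{JoinNatTrans}), joins in $\Par(\Sh_{\M}(\cat{C}))$ are computed as in Theorem~\ref{ParCMJoin} and agree with those in $\Par(\PSh_{\M}(\cat{C}))$ since $\Par(\Sh_{\M}(\cat{C}))$ is a full subcategory closed under these joins, and $L$ already preserves the restriction structure and the partial order; so the componentwise formula is transported across $L$. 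Therefore $L$ is an equivalence of join restriction categories, as claimed. The one delicate point throughout is bookkeeping the identification of ``$(m,f)$ with $m\notin$ iso'' data in $\tilde P$ with matching families over covers — making sure the pullbacks defining compatibility in $\tilde P(C)$ really are the pullbacks $C_iC_j$ appearing in the matching diagram — and this is where I would spend the most care.
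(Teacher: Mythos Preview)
Your proposal is correct and follows essentially the same route as the paper: restrict the known restriction equivalence $L\colon\Par(\PSh_{\M}(\cat{C}))\to\PSh_r(\Par(\cat{C},\M))$ to the two full subcategories, check that sheaves go to join restriction presheaves (well-definedness) and conversely (essential surjectivity), with the translation ``matching family $\leftrightarrow$ compatible family, amalgamation $\leftrightarrow$ join'' doing the work in both directions. The paper carries out the converse direction explicitly via the pseudo-inverse $G$, writing down the amalgamation as $x=\bigvee_i f_i\cdot(a_i,1)$ and verifying $\bar x=(1,1)$ and $x\cdot a_j=f_j$; your sketch of ``the claim applied in reverse'' is exactly this computation.

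The one place your argument diverges is the final step, showing $L'$ preserves joins. You argue directly that a restriction equivalence induces order-isomorphisms on hom-sets and hence preserves joins; this is fine, but your phrasing that joins in $\Par(\Sh_{\M}(\cat{C}))$ ``agree with those in $\Par(\PSh_{\M}(\cat{C}))$'' presupposes the ambient $\Par(\PSh_{\M}(\cat{C}))$ is a join restriction category, which has not been established. The paper sidesteps this by a cleaner indirect argument: since $L'$ is a restriction equivalence, $\Total(L')$ is an equivalence of ordinary categories, hence cocontinuous, and a cocontinuous restriction functor between cocomplete join restriction categories is automatically a join restriction functor (by the observation after the definition of $\cat{jrCocomp}$). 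Either route works once stated carefully.
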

	
	\begin{proof}
		Since $\Par(\Sh_{\M}(\cat{C}))$ is a full subcategory of $\Par(\PSh_{\M}(\cat{C}))$ for any geometric 
		$\M$-category $(\cat{C},\M)$ (Lemma~\ref{ParShSub}), let us consider the following solid diagram:
		$$ \bfig
			\square/-->`^{ (}->`^{ (}->`>/<1200,500>[\Par(\Sh_{\M}(\cat{C}))`\PSh_{jr}(\Par(\cat{C},\M))`\Par(\PSh_{\M}(\cat{C}))`\PSh_r(\Par(\cat{C},\M)).;
				L'```L]
		\efig $$
		We wish to show $L$ restricts to a functor $L'\colon\Par(\Sh_{\M}(\cat{C}))\to\PSh_{jr}(\Par(\cat{C},\M))$
		making the above diagram commute, and that $L'$ is an equivalence of join restriction categories.
		We will begin by showing that $L'$ is well-defined; that is, given a sheaf $P\colon \cat{C}^{\op} \to\Set$, we have to show 
		$\Par(F)(P)=F(P) = \tilde{P}\colon \Par(\cat{C},\M)^{\op}\to\Set$ is a join restriction presheaf.
		
		So let $X\in\Par(\cat{C},\M)$ and suppose $\{ (m_i,f_i)\}_{i\in I}$ is a compatible family of
		maps. That is, $f_i \cdot m_{ji} = f_j \cdot m_{ij}$ for any pair $i,j\in I$, where $m_{ji}$ is the
		pullback of $m_j$ along $m_i$. Since $\Par(\cat{C},\M)$ is a join restriction category by assumption,
		we may take the colimit $\{a_i\}_{i\in I}$ of the matching diagram for $\{ m_i\}_{i\in I}$. Let $\mu$ 
		be the induced map from this colimit.
		
		Now the condition $f_i \cdot m_{ji} = f_j \cdot m_{ij}$ for all $i,j\in I$ implies that $\{f_i\}_{i\in I}$
		is a matching family for the basic cover $\{a_i\}_{i\in I}$. But because $P$ is a sheaf, this implies
		the existence of a unique amalgamation $\gamma$ such that $\gamma \cdot m_i = f_i$ for all $i\in I$. 
		So define the join of $\{ (m_i,f_i)\}_{i\in I}$ to be $(\mu, \gamma \cdot \mu)$. It is then easy
		but tedious to check that the join restriction presheaf axioms hold, which means that $L'$ is 
		well-defined.
		
		Since $\Par(\Sh_{\M}(\cat{C}))$ and $\PSh_{jr}(\Par(\cat{C},\M))$ are both full subcategories,
		it also follows that $L'$ makes the above diagram commute. In addition, as $L$ is an equivalence
		of categories, this makes $L'$ fully faithful, and so it remains to show that $L'$ is essentially
		surjective on objects, and that $L'$ is a join restriction functor.
		
		To show $L'$ is essentially surjective, recall from \cite{GL} that there is an equivalence 
		$G\colon\PSh_r(\Par(\cat{C},\M))\to\Par(\PSh_{\M}(\cat{C}))$ of restriction categories, with
		$FG\cong 1$. On objects, $G$ maps restriction presheaves $P\colon\Par(\cat{C},\M)^{\op}\to\Set$
		to presheaves $\dot{P}\colon \cat{C}^{\op}\to\Set$, with $\dot{P}(X)=\{ x \mid x\in PX, \bar{x}=(1,1)\}$
		and $\dot{P}(f)=P(1,f)$. So if we can show that $G$ maps join restriction presheaves to 
		sheaves on $\cat{C}$, then $L'$ will be essentially surjective on objects.
		
		So let $P$ be a join restriction presheaf on $\Par(\cat{C},\M)$, and consider the presheaf
		$\dot{P}\colon \cat{C}^{\op}\to\Set$. Let $R=\{a_i\colon C_i\to C\}_{i\in I}$
		be a basic cover of $C$, and let $\{f_i \in \dot{P}(C_i)\}_{i\in I}$ be a matching family for $R$. 
		That is, $f_i \cdot \pi_j = f_j \cdot \pi_i$ for all $i,j\in I$, where $\pi_i,\pi_j$ are the pullbacks
		below:
		$$ \bfig
			\square[C_iC_j`C_i`C_j`C.;\pi_j`\pi_i`a_i`a_j]
		\efig $$ 
		Note that $\pi_i,\pi_j\in\M$. Now $\dot{P}$ will be a sheaf if we can find a unique $x\in\dot{P}(C)$ 
		such that $x\cdot a_i = f_i$. We will show that $x=\bigvee_{i\in I} f_i\cdot (a_i,1)$ is the unique
		amalgamation of $\{f_i\}_{i\in I}$. However, first we must show that such a join exists by
		showing $f_i\cdot (a_i,1) \smile f_j\cdot (a_j,1)$ for all $i,j\in I$. Now using the fact
		$f_i \cdot \pi_j=f_j\cdot \pi_i$ if and only if $f_i \cdot (1,\pi_j) = f_j \cdot (1,\pi_i)$ and
		$\ov{f_i} = (1,1)$ for all $i\in I$, we have
		\begin{align*}
			f_i\cdot (a_i,1) \cdot \ov{f_j\cdot (a_j,1)} &= f_i\cdot (a_i,1) \cdot \ov{\ov{f_j}\circ (a_j,1)}
				= f_i\cdot (a_i,1) \cdot (a_j,a_j) = f_i \cdot (a_j \pi_i,\pi_j) \\
				&= [f_i \cdot (1,\pi_j)]\cdot (a_j\pi_i,1) = [f_j \cdot (1,\pi_i)]\cdot (a_j\pi_i,1)
				= f_j \cdot (a_j \pi_i,\pi_i) \\
				&= f_j \cdot (a_i \pi_j,\pi_i) = f_j\cdot (a_j,1) \cdot \ov{f_i\cdot (a_i,1)}.
		\end{align*}
		So $x=\bigvee_{i\in I} f_i\cdot (a_i,1)$ exists. To see that it is in $\dot{P}(C)$, we have
			$$ \bar{x}= \ov{\bigvee_{i\in I} f_i\cdot (a_i,1)} = \bigvee_{i\in I} \ov{\ov{f_i}\circ (a_i,1)}
				= \bigvee_{i\in I} (a_i,a_i) = (1,1). $$
		
		We now check that $x$ is an amalgation of $\{f_i\}_{i\in I}$. That is, $x\cdot a_j = f_j$ for all
		$j\in I$, or equivalently, $x\cdot (1,a_j) = f_j$. Now
		\begin{align*}
			x\cdot(1,a_j) &= \left[\bigvee_{i\in I} f_i\cdot (a_i,1)  \right] \cdot(1,a_j) 
				= f_j \cdot(a_j,1)\cdot(1,a_j) \vee \bigvee_{i\in I-\{j\}} f_i\cdot (a_i,1) \cdot (1,a_j) \\
				&= f_j \vee \bigvee_{i\in I-\{j\}} f_i\cdot (\pi_i,\pi_j).
		\end{align*}
		But $\bigvee_{i\in I-\{j\}} f_i\cdot (\pi_i,\pi_j) \le f_j$ since
		\begin{align*}
			f_j \cdot \ov{\bigvee_{i\in I-\{j\}} f_i\cdot (\pi_i,\pi_j)} 
				&= f_j \cdot \bigvee_{i\in I-\{j\}} \ov{\ov{f_i}\circ (\pi_i,\pi_j)}
				= f_j \cdot \bigvee_{i\in I-\{j\}}  (\pi_i,\pi_i) \\
				&= f_j \cdot \bigvee_{i\in I-\{j\}}  (1,\pi_i) (\pi_i,1) 
				= \bigvee_{i\in I-\{j\}} [f_j \cdot (1,\pi_i)] \cdot (\pi_i,1) \\
				&= \bigvee_{i\in I-\{j\}} [f_i \cdot (1,\pi_j)] \cdot (\pi_i,1)
				= \bigvee_{i\in I-\{j\}} f_i \cdot (\pi_i,\pi_j).
		\end{align*}
		So $x\cdot (1,a_i) = x\cdot a_i = f_i$ for all $i\in I$, making $x=\bigvee_{i\in I} f_i\cdot (a_i,1)$
		an amalgation of $\{f_i\}_{i\in I}$. It remains to show that such an $x$ is unique.

		So let $y\in\dot{P}(C)$ satisfy the condition $y\cdot a_i = y\cdot (1,a_i) = f_i$ for all $i\in I$.
		Then $y\cdot (1,a_i) = x\cdot (1,a_i)$ implies 
		$\bigvee_{i\in I}y\cdot (1,a_i)(a_i,1) = \bigvee_{i\in I}x\cdot (1,a_i)(a_i,1)$, which in turn implies
		$x=y$ since $\bigvee_{i\in I}(a_i,a_i)=(1,1)$. Therefore, if $P$ is a join restriction presheaf
		on $\Par(\cat{C},\M)$, then $\dot{P}$ is a sheaf, and so $L'$ is essentially surjective on objects.
		
		Finally, to show that $L'$ is a join restriction functor, note that $L'$ is a restriction functor as $L$ is a 
		restriction functor. Furthermore, $\Total(L')\colon \Sh(\cat{C})\to\Total(\PSh_{jr}(\Par(\cat{C},\M)))$ 
		is an equivalence of categories, with pseudo-inverse given by $\Total(G)$ restricted back to 
		$\Total(\PSh_{jr}(\Par(\cat{C},\M)))$. Therefore, as $\Total(L')$ is 
		cocontinuous, $L'$ must be a join restriction functor, and 
		hence $\Par(\Sh_{\M}(\cat{C}))$ and $\PSh_{jr}(\Par(\cat{C},\M))$ are equivalent as join restriction categories.
	\end{proof}
	
	\begin{Cor}
		For any small join restriction category $\cat{X}$, the Yoneda embedding 
		$\yon_{jr} \colon \cat{X}^{\op} \to \PSh_{jr}(\cat{X})$ exhibits the category of join restriction presheaves 
		$\PSh_{jr}(\cat{X})$ as its free cocompletion, in the sense that the functor
			$$ (-) \circ \yon_{jr} \colon \cat{jrCocomp}(\PSh_{jr}(\cat{X}),\E) \to \cat{jrCat}(\cat{X},\E) $$
		is an equivalence of categories for any cocomplete join restriction category $\E$.
	\end{Cor}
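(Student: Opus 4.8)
The plan is to deduce the corollary from the free-cocompletion result of Section~\hyperref[sec4]{4} together with the equivalence of Theorem~\ref{Equivalence}. Recall that \eqref{Unit} exhibits $\Par(\Sh_{\M}(\M\Total(\msf{K}_r(\cat{X}))))$, via the composite $\eta_{\cat{X}}$ of \eqref{Paray}, as the free join restriction cocompletion of the small join restriction category $\cat{X}$. So it suffices to identify this partial map category of sheaves, \emph{as a join restriction category}, with $\PSh_{jr}(\cat{X})$ in a way that carries $\eta_{\cat{X}}$ to $\yon_{jr}$; the universal property recorded in \eqref{Unit} then transports to the asserted equivalence $(-)\circ\yon_{jr}\colon\cat{jrCocomp}(\PSh_{jr}(\cat{X}),\E)\to\cat{jrCat}(\cat{X},\E)$.

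First I would set $(\cat{C},\M):=\M\Total(\msf{K}_r(\cat{X}))$. As $\cat{X}$ is small, so are $\msf{K}_r(\cat{X})$ and hence $(\cat{C},\M)$; since $\msf{K}_r(\cat{X})$ is a \emph{split} join restriction category, the $2$-equivalence between $\cat{jrCat}_s$ and $\cat{g}\M\Cat$ shows that $(\cat{C},\M)$ is a geometric $\M$-category and that $\Par(\cat{C},\M)\cong\msf{K}_r(\cat{X})$. Applying Theorem~\ref{Equivalence} to $(\cat{C},\M)$ then yields an equivalence of join restriction categories
\[
	\Par(\Sh_{\M}(\M\Total(\msf{K}_r(\cat{X})))) \;\simeq\; \PSh_{jr}(\Par(\cat{C},\M)) \;\cong\; \PSh_{jr}(\msf{K}_r(\cat{X})).
\]

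Next I would prove that $\PSh_{jr}(\msf{K}_r(\cat{X}))\simeq\PSh_{jr}(\cat{X})$ as join restriction categories. Precomposition with $J\colon\cat{X}\to\msf{K}_r(\cat{X})$ gives a restriction functor $J^{*}\colon\PSh_r(\msf{K}_r(\cat{X}))\to\PSh_r(\cat{X})$; since $J$ is fully faithful and exhibits $\msf{K}_r(\cat{X})$ as a splitting of restriction idempotents, every object of $\msf{K}_r(\cat{X})$ is a retract of one in $\cat{X}$, and the usual Cauchy-completion argument shows $J^{*}$ is an equivalence of restriction categories (a restriction presheaf on $\cat{X}$ extends essentially uniquely along $J$). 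Using the collage characterisation --- a restriction presheaf is a join restriction presheaf exactly when its collage is a join restriction category, and $\msf{K}_r$ of a join restriction category is again one --- this restricts to an equivalence between the full subcategories of join restriction presheaves, and one checks on components that it respects the pointwise joins described earlier.

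Combining the two equivalences gives $\Par(\Sh_{\M}(\M\Total(\msf{K}_r(\cat{X}))))\simeq\PSh_{jr}(\cat{X})$, and it remains to track the embedding through the chain. The equivalence $L'$ of Theorem~\ref{Equivalence} is built from the equivalence $F$ of \cite{GL}, which sends representable presheaves on $\cat{C}$ to representable restriction presheaves on $\Par(\cat{C},\M)$; since $\sheaf$ is a reflection onto sheaves, $\Par(\sheaf\yon)$ corresponds under $L'$ to the Yoneda embedding of $\msf{K}_r(\cat{X})$ into $\PSh_{jr}(\msf{K}_r(\cat{X}))$, and composing with $J$ and with the equivalence $\PSh_{jr}(\msf{K}_r(\cat{X}))\simeq\PSh_{jr}(\cat{X})$ identifies $\eta_{\cat{X}}$ with $\yon_{jr}$. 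The main obstacle I anticipate is exactly this last step: keeping the variances straight and confirming that ``representable maps to representable'' is preserved at every stage, so that the transported universal cocone is genuinely the Yoneda embedding and not merely some equivalent functor carrying the same universal property.
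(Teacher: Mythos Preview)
Your proposal is correct and follows essentially the same route as the paper: transport the universal property of \eqref{Unit} along the equivalence of Theorem~\ref{Equivalence}, then identify the resulting composite with $\yon_{jr}$. The only difference is packaging: where you spell out the Cauchy-completion argument for $\PSh_{jr}(\msf{K}_r(\cat{X}))\simeq\PSh_{jr}(\cat{X})$ and the tracking of representables explicitly, the paper simply cites the analogous argument from \cite[Theorem 4.12]{GL} (the restriction-presheaf version) and asserts that it goes through verbatim in the join setting.
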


	\begin{proof}
		The composite
		$$ \cat{X} \xrightarrow{J} \msf{K}_r(\cat{X}) \xrightarrow{\cong} \Par(\M\Total(\msf{K}_r(\cat{X}))) 
				\xrightarrow{\Par(\sheaf\yon)} \Par(\Sh_{\M}(\M\Total(\msf{K}_r(\cat{X})))) 
				\xrightarrow{\simeq} \PSh_{jr}(\cat{X}) $$
		from \eqref{Paray} is naturally isomorphic to $\yon_{jr}$ by the same argument as
		presented in \cite[Theorem 4.12]{GL}. Since precomposition with \eqref{Unit} is an equivalence of 
		categories, it follows that precomposition with $\yon_{jr}$ is also an equivalence of categories.
	\end{proof}


\bibliographystyle{plain}
\bibliography{Presheaves_over_a_join_restriction_category_-_Lin}

\end{document}